\documentclass[10pt, a4]{amsart}

\usepackage{lmodern}
\usepackage[utf8]{inputenc}
\usepackage{amsmath}
\usepackage{graphicx}
\usepackage{amssymb}
\usepackage{esint}
\usepackage[dvipsnames]{xcolor}
\usepackage{tikz}
\usepackage{xxcolor}
\usepackage{floatrow}
\usepackage{color}
\usepackage{amsthm}
\usepackage{epsfig}
\usepackage[english]{babel}
\usepackage{hyperref}
\usepackage[normalem]{ulem}
\usepackage{mathrsfs}
\usepackage{tikz}
\usetikzlibrary{decorations.markings,backgrounds}
\usetikzlibrary{arrows.meta}
\usepackage{pgfplots}
\usepackage{subfigure}
\usepackage{caption}
\usepackage{bbm}

\usepackage{cite}

\usepackage{stmaryrd}

\usepackage{float}
\usepackage{pst-plot}

\hypersetup{
	colorlinks,
	linkcolor={red!80!black},
	citecolor={blue!50!black},
	urlcolor={blue!80!black}
}
 \usepackage{diagbox}

\sloppy 

\usepackage[a4paper,top=3.5 cm,bottom=3 cm,left=2.5 cm,right=2.5 cm]{geometry}


\theoremstyle{plain}
\newtheorem{lemma}{Lemma}[section]
\newtheorem{proposition}[lemma]{Proposition}
\newtheorem{theorem}{Theorem}[section]

\theoremstyle{definition}

\theoremstyle{remark}
\newtheorem{remark}[lemma]{Remark}

\numberwithin{equation}{section}

\def\to{\rightarrow}











\usepackage{dsfont}













\renewcommand{\div}{\mathrm{div} \hspace{0.5mm}}        

\newcommand{\supp}{\mathrm{supp}}

\newcommand{\nchi}{{\raise.3ex\hbox{$\chi$}}}

\def\XXint#1#2#3{{\setbox0=\hbox{$#1{#2#3}{\int}$ }
		\vcen{\hbox{$#2#3$ }}\kern-.6\wd0}}


\newcommand{\cR}{\mathcal{R}}





\newcommand{\R}{\mathbb{R}}

\usepackage{mathtools}





\makeatletter
\newcommand{\justified}{%
	\rightskip\z@skip%
	\leftskip\z@skip}
\makeatother
\usepackage{amsfonts}
\usepackage{aurical}


\newcommand\restr[2]{{
		\left.\kern-\nulldelimiterspace 
		#1 
		\vphantom{\big|} 
		\right|_{#2} 
}}

\newcommand{\var}{\varepsilon}

\DeclareFontFamily{U}{mathx}{\hyphenchar\font45}
\DeclareFontShape{U}{mathx}{m}{n}{<-> mathx10}{}
\DeclareSymbolFont{mathx}{U}{mathx}{m}{n}
\DeclareMathAccent{\widebar}{0}{mathx}{"73}

\renewcommand{\i}{\ifmmode\mathit{\mathchar"7010 }\else\char"10 \fi}
\renewcommand{\j}{\ifmmode\mathit{\mathchar"7011 }\else\char"11 \fi}

\renewcommand{\ge}{\geq}




\def\char{{1\!\mbox{\rm l}}}

\usepackage{dsfont}

\definecolor{orange}{rgb}{1,.549,0}
\definecolor{GreenYellow }{rgb}{ 0.15,   0.69, 0}
\definecolor{Yellowone}{rgb}{ 0, 1., 0} \definecolor{Goldenrod }{rgb}{  0, 0.10, 0.84}
\definecolor{Dandelion }{rgb}{ 0, 0.29, 0.84} 
\definecolor{Apricot }{rgb}{ 0, 0.32, 0.52}
\definecolor{Peach }{rgb}{ 0, 0.50, 0.70} 
\definecolor{GreenYellow}{cmyk}{0.15,0,0.69,0}
\definecolor{RoyalPurple}{cmyk}{0.75,0.90,0,0}
\definecolor{Yellow}{cmyk}{0,0,1,0}
\definecolor{BlueViolet}{cmyk}{0.86,0.91,0,0.04}
\definecolor{Goldenrod}{cmyk}{0,0.10,0.84,0}
\definecolor{Periwinkle}{cmyk}{0.57,0.55,0,0}
\definecolor{Dandelion}{cmyk}{0,0.29,0.84,0}
\definecolor{CadetBlue}{cmyk}{0.62,0.57,0.23,0}
\definecolor{Apricot}{cmyk}{0,0.32,0.52,0}
\definecolor{CornflowerBlue}{cmyk}{0.65,0.13,0,0}
\definecolor{Peach}{cmyk}{0,0.50,0.70,0}
\definecolor{MidnightBlue}{cmyk}{0.98,0.13,0,0.43}
\definecolor{Melon}{cmyk}{0,0.46,0.5,0}
\definecolor{NavyBlue}{cmyk}{0.94,0.54,0,0}
\definecolor{YellowOrange}{cmyk}{0,0.42,1,0}
\definecolor{RoyalBlue}{cmyk}{1,0.50,0,0}
\definecolor{Orange}{cmyk}{0,0.61,0.87,0}
\definecolor{Blue}{cmyk}{1,1,0,0}
\definecolor{BurntOrange}{cmyk}{0,0.51,1,0}
\definecolor{Cerulean}{cmyk}{0.94,0.11,0,0}
\definecolor{Bittersweet}{cmyk}{0,0.75,1,0.24}
\definecolor{Cyan}{cmyk}{1,0,0,0}
\definecolor{RedOrange}{cmyk}{0,0.77,0.87,0}
\definecolor{ProcessBlue}{cmyk}{0.96,0,0,0}
\definecolor{Mahogany}{cmyk}{0,0.85,0.87,0.35}
\definecolor{SkyBlue}{cmyk}{0.62,0,0.12,0}
\definecolor{Maroon}{cmyk}{0,0.87,0.68,0.32}
\definecolor{Turquoise}{cmyk}{0.85,0,0.20,0}
\definecolor{BrickRed}{cmyk}{0,0.89,0.94,0.28}
\definecolor{TealBlue}{cmyk}{0.86,0,0.34,0.02}
\definecolor{Red}{cmyk}{0,1,1,0}
\definecolor{Aquamarine}{cmyk}{0.82,0,0.30,0}
\definecolor{OrangeRed}{cmyk}{0,1,0.50,0}
\definecolor{BlueGreen}{cmyk}{0.85,0,0.33,0}
\definecolor{RubineRed}{cmyk}{0,1,0.13,0}
\definecolor{Emerald}{cmyk}{1,0,0.50,0}
\definecolor{WildStrawberry}{cmyk}{0,0.96,0.39,0}
\definecolor{JungleGreen}{cmyk}{0.99,0,0.52,0}
\definecolor{Salmon}{cmyk}{0,0.53,0.38,0}
\definecolor{SeaGreen}{cmyk}{0.69,0,0.50,0}
\definecolor{CarnationPink}{cmyk}{0,0.63,0,0}
\definecolor{Green}{cmyk}{1,0,1,0}
\definecolor{Magenta}{cmyk}{0,1,0,0}
\definecolor{ForestGreen}{cmyk}{0.91,0,0.88,0.12}
\definecolor{VioletRed}{cmyk}{0,0.81,0,0}
\definecolor{PineGreen}{cmyk}{0.92,0,0.59,0.25}
\definecolor{Rhodamine}{cmyk}{0,0.82,0,0}
\definecolor{LimeGreen}{cmyk}{0.50,0,1,0}
\definecolor{Mulberry}{cmyk}{0.34,0.90,0,0.02}
\definecolor{YellowGreen}{cmyk}{0.44,0,0.74,0}
\definecolor{RedViolet}{cmyk}{0.07,0.90,0,0.34}
\definecolor{SpringGreen}{cmyk}{0.26,0,0.76,0}
\definecolor{Fuchsia}{cmyk}{0.47,0.91,0,0.08}
\definecolor{OliveGreen}{cmyk}{0.64,0,0.95,0.40}
\definecolor{Lavender}{cmyk}{0,0.48,0,0}
\definecolor{RawSienna}{cmyk}{0,0.72,1,0.45}
\definecolor{Thistle}{cmyk}{0.12,0.59,0,0}
\definecolor{Sepia}{cmyk}{0,0.83,1,0.70}
\definecolor{Orchid}{cmyk}{0.32,0.64,0,0}
\definecolor{Brown}{cmyk}{0,0.81,1,0.60}
\definecolor{DarkOrchid}{cmyk}{0.40,0.80,0.20,0}
\definecolor{Tan}{cmyk}{0.14,0.42,0.56,0}
\definecolor{Purple}{cmyk}{0.45,0.86,0,0}
\definecolor{Gray}{cmyk}{0,0,0,0.50}
\definecolor{Plum}{cmyk}{0.50,1,0,0}
\definecolor{Black}{cmyk}{0,0,0,1}
\definecolor{Violet}{cmyk}{0.79,0.88,0,0}
\definecolor{White}{cmyk}{0,0,0,0}
\definecolor{rltred}{rgb}{0.75,0,0}
\definecolor{rltgreen}{rgb}{0,0.5,0}
\definecolor{oneblue}{rgb}{0,0,0.75}
\definecolor{marron}{rgb}{0.64,0.16,0.16}
\definecolor{forestgreen}{rgb}{0.13,0.54,0.13}
\definecolor{purple}{rgb}{0.62,0.12,0.94}
\definecolor{dockerblue}{rgb}{0.11,0.56,0.98}
\definecolor{freeblue}{rgb}{0.25,0.41,0.88}
\definecolor{myblue}{rgb}{0,0.2,0.4}
\definecolor{Melon}{rgb}{ 0.46, 0.50, 0}
\definecolor{Melone}{rgb}{ 0, 0.46, 0.50}

\allowdisplaybreaks
\usepackage[colorinlistoftodos]{todonotes}
\usepackage{url}

\usepackage{graphicx,tikz} 
\begin{document}

\title[The compressible Euler-Maxwell system]{A new
characterization of the dissipation structure and the relaxation limit for the compressible Euler-Maxwell system}

\author[T. Crin-Barat]{Timothée Crin-Barat}
\address[T. Crin-Barat]{Chair for Dynamics, Control, Machine Learning and Numerics, Alexander von Humboldt Professorship, Department
of Mathematics, Friedrich-Alexander Universität Erlangen-Nürnberg, 91058 Erlangen, Germany}
\email{timothee.crin-barat@fau.de}

\author[Y.-J. Peng]{Yue-Jun Peng}
\address[Y.-J. Peng]{Laboratoire de Mathématiques Blaise Pascal,
Université Clermont Auvergne / CNRS
63178 Aubière Cedex, France}
\email{yue-jun.peng@uca.fr}

\author[L.-Y. Shou]{Ling-Yun Shou}
\address[L.-Y. Shou]{School of Mathematics and Key Laboratory of Mathematical MIIT, Nanjing University of Aeronautics and Astronautics, Nanjing, 211106,
P. R. China}
\email{shoulingyun11@gmail.com}

\author[J. Xu]{Jiang Xu}
\address[J. Xu]{School of Mathematics and Key Laboratory of Mathematical MIIT, Nanjing University of Aeronautics and Astronautics, Nanjing, 211106,
P. R. China}
\email{jiangxu\underline{~}79math@yahoo.com}

\keywords{Euler-Maxwell system; Drift-diffusion system; Diffusive relaxation limit; Non-symmetric relaxation; Partially dissipative systems; Critical regularity}

\subjclass[2010]{35B40, 35L65, 35L02.}

\linespread{1.1}

\maketitle

\begin{abstract}

We investigate the three-dimensional compressible Euler-Maxwell system, a model for simulating the transport of electrons interacting with propagating electromagnetic waves in semiconductor devices. First, we show the global well-posedness of classical solutions being a \textit{sharp} small perturbation of constant equilibrium in a critical regularity setting, uniformly with respect to the relaxation parameter $\varepsilon>0$. Then, for all times $t>0$, we derive quantitative error estimates at the rate $O(\varepsilon)$ between the  rescaled Euler-Maxwell system and the limit drift-diffusion model. To the best of our knowledge, this work provides the first global-in-time strong convergence for the relaxation procedure in the case of ill-prepared data.

In order to prove our results, we develop a new characterization of the dissipation structure for the linearized Euler-Maxwell system with respect to the relaxation parameter $\varepsilon$. This is done by partitioning the frequency space into three distinct regimes: low, medium and high frequencies, each associated with a different behaviour of the solution. Then, in each regime, the use of efficient unknowns and Lyapunov
functionals based on the hypocoercivity theory leads to uniform a priori estimates. 
\end{abstract}

\section{Introduction}
The Euler-Maxwell system for plasma physics is widely used to simulate phenomena such as photoconductive switches, electro-optics, semiconductor lasers, high-speed computers, etc. In these applications, the transported electrons interact with electromagnetic waves and the model takes the form of Euler equations for the conservation laws of mass density, current density and energy density for electrons, coupled to Maxwell's equations for self-consistent electromagnetic fields (see \cite{chenEM,CJW, RG1} for more explanations). In this paper, we shed new light on such interactions between classical fluid mechanics laws and electrical and magnetic forces to establish long-time existence and relaxation limit results. To achieve this, we propose a new approach based on the natural hypocoercive properties of the system arising from these interactions.

We consider the isentropic Euler-Maxwell system in $\R^3$ which, for $(t,x)\in [0,+\infty)\times \R^3$, reads
\begin{equation}\label{EM}
\left\{
\begin{aligned}
&\partial_{t}\rho+\div (\rho u)=0,\\ 
&\partial_{t}(\rho u)+\div (\rho u\otimes u)+\nabla P(\rho)=-\rho(E+u\times B)-\smash{\frac{1}{\var}}\rho u,\\
&\partial_{t}E-\nabla\times B=\rho u,\\
&\partial_{t}B+\nabla\times E=0,
\end{aligned}
\right.
\end{equation}
with the constraints 
\begin{equation}\label{EM2}
\div E=\bar{\rho}-\rho \quad \text{and} \quad 
\div B=0.
\end{equation}
Here $\rho=\rho(t,x)>0$ and $u=u(t,x)\in\R^3$
are, respectively, the density and the velocity
of electrons, $E=E(t,x)\in\R^3$ denotes the electric field, and $B=B(t,x)\in \R^3$ is the magnetic field. In the momentum equation in  $\eqref{EM}_{2}$, the term $\rho(E+u\times B)$ stands for the Lorentz force, $\rho u$ is a damping term associated with friction forces and $\var>0$ is a relaxation parameter. The pressure $P(\rho)$ is assumed to be a smooth function of the density fulfilling $P'(\bar{\rho})>0$ for $\bar{\rho}>0$ a constant density of charged background ions. We are concerned with \eqref{EM}-\eqref{EM2} for the initial data
\begin{align}
(\rho,u,E,B)(0,x)=(\rho_{0},u_{0},E_{0},B_{0})(x),\quad x\in\R^3,\label{EMd}
\end{align}
and focus on solutions that are close to some constant state $(\bar{\rho}, 0, 0, \bar{B})$ at infinity, where $\bar{B}\in\R^3$ is a constant vector. Note that the constraint condition \eqref{EM2} remains true for every $t>0$ if it holds at time $t=0$:  
\begin{align}
&\div E_{0}=\bar{\rho}-\rho_{0},\quad \div B_{0}=0.\label{cc}
\end{align}


One of the main interests of the present paper is to justify the relaxation limit of solutions to \eqref{EM} as  $\var\rightarrow 0$ in a diffusive scaling. To this end, we perform the 
$\mathcal{O}(1/\var)$ change of 
time scale: 
\begin{equation}
\begin{aligned}
&(\rho^{\var},u^{\var},E^{\var} ,B^{\var} )(t,x):=(\rho,\frac{1}{\var}u,E,B)(\frac{t}{\var},x).\label{scaling1}
\end{aligned}
\end{equation}
The new variables satisfy
\begin{equation}\label{EMvar}
\left\{
\begin{aligned}
&\partial_{t}\rho^{\var}+\div (\rho^{\var} u^{\var})=0,\\
&\var^2\partial_{t}(\rho^{\var} u^{\var})+\var^2\div (\rho^{\var} u^{\var} \otimes u^{\var} )+\nabla P(\rho^{\var})=-\rho^{\var} (E^{\var} +\var u^{\var}\times B^{\var} )-\rho^{\var} u^{\var},\\
&\var\partial_{t}E^{\var} -\nabla\times B^{\var} =\var\rho^{\var} u^{\var},\\
&\var\partial_{t}B^{\var} +\nabla\times E^{\var} =0,\\
&\div E^{\var} =\bar{\rho}-\rho^{\var},\\
&\div B^{\var} =0,
\end{aligned}
\right.
\end{equation}
with the initial data
\begin{align} (\rho^{\var},u^{\var},E^{\var},B^{\var})(0,x)=(\rho_{0},\frac{1}{\var}u_{0},E_{0},B_{0})(x),\quad x\in\R^3.\label{EMdvar}
\end{align}
Formally, as $\var\rightarrow0$, $(\rho^{\var},u^{\var},E^{\var} ,B^{\var} )$ converges to $(\rho^{*},u^{*},E^{*},B^{*})$ solving
\begin{equation}\label{EMlimit}
\left\{
\begin{aligned}
&\partial_{t}\rho^{*}+\div (\rho^{*} u^{*})=0,\\
&\rho^{*} u^{*}=-\nabla P(\rho^{*})-\rho^{*} E^{*},\\
&\nabla\times B^{*}=0,\\
&\nabla\times E^{*}=0,\\
&\div E^{*}=\bar{\rho}-\rho^{*},\\
&\div B^{*}=0.
\end{aligned}
\right.
\end{equation}
Clearly, since
\[  \nabla\times B^{*}=0\quad \mbox{and}\quad\div B^{*}=0,  \]
we may take $B^{*}=\bar{B}$. Moreover, due to $\nabla\times E^{*}=0$, there exists a potential function $\phi^{*}$ such that $E^{*}=\nabla \phi^{*}=\nabla(-\Delta)^{-1}(\rho^{*}-\bar{\rho})$. Thus, \eqref{EMlimit} reformulates as the drift-diffusion model for semiconductors:
\begin{equation}\label{DD}
\left\{
\begin{aligned}
&\partial_{t}\rho^{*}-\Delta P(\rho^{*})-\div (\rho^{*} \nabla \phi^{*})=0,\\
&\Delta \phi^{*}=\bar{\rho}-\rho^{*}.
\end{aligned}
\right.
\end{equation}
The velocity field $u^{*}$ satisfies the Darcy's law:
\begin{equation}\label{darcy}
\begin{aligned}
&u^{*}=-\nabla (h(\rho^{*})+\phi^{*}),
\end{aligned}
\end{equation}
where the enthalpy $h(\rho)$ is defined by
\begin{align}
&h(\rho):=\int_{\bar{\rho}}^{\rho}\frac{P'(s)}{s}ds.\label{enth1}
\end{align}




\subsection{Existing literature}
So far there are several results concerning the global existence, large-time behaviour and asymptotic limit for the isentropic Euler-Maxwell system \eqref{EM}. In one dimension, using a Godunov scheme with fractional steps and the compensated compactness theory,  Chen, Jerome and Wang \cite{CJW} constructed global
weak solutions to the initial boundary value problem for arbitrarily large initial data. In the multidimensional case, the question of global weak solutions is quite open and mainly smooth solutions have been studied. Jerome \cite{jerome1}
established the local well-posedness of smooth solutions to the Cauchy problem \eqref{EM}-\eqref{EMd} in the framework of Sobolev spaces $H^{s}(\mathbb{R}^{d})$ with $s>\frac{5}{2}$ according to the standard theory for symmetrizable
hyperbolic systems. The existence of global smooth solutions near constant equilibrium states has been obtained independently
by Peng, Wang \& Gu\cite{PYJEMSIAM}, Duan\cite{Duan2011} and Xu \cite{XuEM}. Xu 
employed the theory of Besov spaces and established the global existence of classical solutions in $B^{s_{*}}$ with the critical regularity index $s_{*}=\frac{5}{2}$ and analyzed the singular limits, such as the relaxation limit and the non-relativistic limit. Ueda, Wang and Kawashima \cite{UedaWangKawa2012} pointed out that the system \eqref{EM} was of regularity-loss type and time-decay estimates were derived in \cite{Duan2011,UedaKawa2011}. Concerning the relaxation from \eqref{EMvar} to \eqref{DD}, Hajjej and Peng \cite{hajjejpeng12} carried out an asymptotic expansion and obtained convergence rates for the relaxation procedure in the case of local-in-time solutions for both well-prepared data and ill-prepared data. Recently, Li, Peng and Zhao  \cite{liEM} studied the relaxation limit for global smooth solutions in periodic domains and obtained error estimates of smooth periodic solutions between \eqref{EMvar} and \eqref{DD} by stream function techniques and Poincar\'e inequality.  Concerning the stability of steady-states, we refer to those works \cite{pengzhu14,peng2015,liuguopeng19}. Let us also mention \cite{DLZ,Peng12,XuEMtwo} pertaining to the global well-posedness of two-fluid Euler-Maxwell equations near constant states. 

In order to investigate the large-time behaviour of solutions to the system \eqref{EM}, as observed by Duan\cite{Duan2011}, Ueda, Wang and Kawashima \cite{UedaKawa2011,UedaWangKawa2012}, one must rely on a \textit{non-symmetric dissipation} mechanism due to the coupled electric and  magnetic fields, which leads to the regularity loss phenomenon. More precisely, let $U_{L}$ be the solution to the linearized system of \eqref{EM} around $(\bar{\rho},0,0,\bar{B})$ with $\var=1$. As shown in \cite{UedaKawa2011}, the Fourier transform $\widehat{U_{L}}$ satisfies the following pointwise estimate: 
\begin{equation}
\begin{aligned}
|\widehat{U_{L}}(t,\xi)|^2\lesssim e^{ -\frac{c |\xi|^2}{(1+|\xi|^2)^2}t}|\widehat{U_{L}}(0,\xi)|^2,\label{pointwisevar}
\end{aligned}
\end{equation}
for all $t>0$, $\xi\in \R^3$ and  some constant $c>0$. The solution $U_{L}$ decays like the heat kernel at low frequencies and, for the high-frequency part, it decays at the price of additional regularity assumption on the initial data. 
Later, Ueda, Duan and Kawashima \cite{UDK1} formulated a new structural condition to analyze the weak dissipative mechanism for general hyperbolic systems with non-symmetric relaxation (including the Euler-Maxwell system \eqref{EM}). 
Xu, Mori and Kawashima \cite{xumorikaw1} developed a general time-decay inequality of $L^p$-$L^q$-$L^r$ type, which allows to get the minimal regularity for the decay estimate of $L^1$-$L^2$ type. Recently,  Mori \cite{Mori} presented a kind of S-K mixed criterion that is applicable also to weakly dissipative
models including the Timoshenko–Cattaneo system.

In the absence of damping term in \eqref{EM}, using the ``space-time resonance method", Germain-Masmoudi \cite{GMEM1} proved the global existence and scattering at the rate $t^{-1/2}$. Subsequently, nontrivial global solutions being small irrotational perturbations of constant solutions of the full two-fluid system were constructed by Guo-Ionescu-Pausader \cite{GuoIon}. In the $2D$ case, there is one critical new difficulty, namely the slow decay of solutions. Deng-Ionescu-Pausader \cite{DAI1} proved the global stability of a constant neutral background by using a combination of improved energy estimates in the Fourier space and an $L^2$ bound on the oscillatory integral operator. The global regularity results described above are restricted to the case of solution with trivial vorticity. Ionescu and Lie \cite{IonLie} initiated the study of long-term regularity of solutions with nontrivial vorticity and proved that sufficiently small solutions extended smoothly on a time of existence that depends only on the size of the vorticity.

\vspace{2mm}

In the manuscript, we are interested in the dissipative mechanism arising from the non-symmetric relaxation and their interactions with respect to the relaxation parameter $\var$
for the Euler-Maxwell system \eqref{EM}.  Before stating the paper's findings, we recall recent efforts devoted to studying partially dissipative hyperbolic systems with symmetric relaxation of the type:
\begin{equation}
\frac{\partial V}{\partial t} + \sum_{j=1}^dA^j(V)\frac{\partial V}{\partial x_j}=\frac{H(V)}{\var}, \label{GEQSYM}
\end{equation}
where the unknown $V=V(t,x)$ is a $N$-vector valued function depending on the time variable $t\in\mathbb{R}_{+}$ and on the space variable $x\in\mathbb{R}^d(d\geq1).$ The $A^j(V)$ ($j=1,..,d$) and $H$ are given smooth functions on $\mathcal{O}_{V}\in\mathbb{R}^N$ (the state space).  

Note that in the absence of source term $H(V)$, \eqref{GEQSYM} reduces to a system of conservation laws. In that case, it is well-known that classical solutions may develop singularities (\textit{e.g.}, shock waves) in finite time, even if initial data are sufficiently smooth and small (see Dafermos \cite{Dafermos1} and Lax \cite{Lax1}). The system \eqref{GEQSYM} with relaxation effect is of interest in numerous physical situations, including gas flow near thermo-equilibrium, kinetic theory with small mean free path and viscoelasticity with vanishing memory (cf. \cite{Cere,Vicenti,Whitham}). It also arises in the numerical simulation of conservation laws (see \cite{JinXin}). A typical example is the following isentropic compressible Euler equations with damping: \begin{equation}\label{euler}
\left\{
\begin{aligned}
& \partial_{t}\rho+\div (\rho u)=0,\\
& \partial_{t}(\rho u)+\div (\rho u\otimes u)+\nabla P(\rho)+\frac{1}{\var} \rho u=0.
\end{aligned}
\right.
\end{equation}
In the case $\var=1$, a natural question arises: what conditions
can be imposed on $H(V)$ so it prevents the finite-time blowup of classical solutions? Chen, Levermore and Liu \cite{chen1994} first formulated a notion of the entropy for \eqref{GEQSYM}, which was a natural extension of the classical one due to Godunov \cite{Godunov2}, Friedrichs and Lax \cite{FL} for conservation laws. However, their dissipative entropy condition is not sufficient to develop a global existence theory for \eqref{GEQSYM}. Later, imposing a technical requirement on the entropy, Yong \cite{Yong} proved the global existence of classical solutions in a neighbourhood of constant equilibrium $\bar{V}\in \mathbb{R}^{N}$ satisfying $H(\bar{V})=0$ under the Shizuta–Kawashima condition \cite{SK}. We also mention that Hanouzet and Natalini \cite{HanouzetNatalini} obtained a similar global existence result for the one-dimensional problem before the work \cite{Yong}. Subsequently, Kawashima and Yong \cite{KY} removed the technical requirement on the dissipative entropy used in \cite{Yong,HanouzetNatalini} and gave a perfect definition
of the entropy notion, which leads to the global existence in regular Sobolev spaces. Then, Bianchini, Hanouzet and Natalini \cite{BHN} showed that smooth solutions approach the constant equilibrium state $\bar{V}$ in the $L^{p}$-norm at the rate $O(t^{-\frac{d}{2}(1-\frac{1}{p})})$, as $t\rightarrow\infty$, for
$p\in[\min\{d,2\},\infty]$, by using the Duhamel principle and a
detailed analysis of the Green kernel estimates for the linearized
problem. 

Recently, Beauchard and Zuazua \cite{BZ} framed the global-in-time existence theory in the spirit of Villani's hypocoercivity \cite{Villani} and established the equivalence of the Shizuta-Kawashima condition and the Kalman rank condition from control theory. Then, Kawashima and the fourth author in \cite{XK1,XK2,XK1D} extended the prior works to the larger setting of critical non-homogeneous Besov spaces $B^{s_{*}}$. Note that the mathematical theory of Kato \cite{Katolocal} and Majda \cite{Majdalocal} for quasilinear hyperbolic systems is invalid in $H^{s_*}$. In recent works, the first author and Danchin \cite{CBD2,CBD1,CBD3} employed 
hybrid Besov norms with different regularity exponents in low and high frequencies, which allows to pinpoint optimal smallness conditions for the global well-posedness of the Cauchy problem of \eqref{GEQSYM} and to get more 
accurate information on the qualitative and quantitative properties of the constructed solutions.
Regarding the relaxation limit as $\varepsilon\to0$ in systems of the type \eqref{GEQSYM}, the first justification is due to Marcati, Milani and Secchi \cite{MMS} in a one-dimensional setting. The limiting procedure was carried out by using the theory of compensated compactness. Then, Liu \cite{Liu} proved, using the approach based on the theory of nonlinear waves, the relaxation to parabolic equations for genuinely nonlinear hyperbolic systems. Marcati and Milani \cite{mar0} considered the time rescaling 
\eqref{scaling1} for the one-dimensional compressible Euler flow
\eqref{euler} and derived Darcy's law in the limit $\var\rightarrow0$, which is analogous to the one derived in \cite{MMS}. Later,  Marcati and Rubino \cite{marcatiparabolicrelaxation}
developed a complete hyperbolic to parabolic relaxation theory for $2\times 2$ genuinely nonlinear hyperbolic balance laws. Junca and Rascle \cite{Junca} established the relaxation convergence from the isothermal equation \eqref{euler} to the heat equation for arbitrarily large initial data in $BV(\mathbb{R})$ that are bounded away from vacuum.

As for \eqref{GEQSYM} in several dimensions,
Coulombel, Goudon and Lin \cite{CoulombelGoudon,CoulombelLin} employed the classical energy approach and constructed uniform-in-$\varepsilon$ smooth solutions to the isothermal Euler equations \eqref{euler} and then they justiﬁed the weak relaxation limit in the Sobolev spaces $H^{s}(\mathbb{R}^{d})\:(s>1+d/2,\ s\in \mathbb{Z})$. The fourth author and Wang \cite{XuWang} improved their works to the setting of critical Besov space $B^{\frac{d}{2}+1}$. More precisely, it is shown that the density converges towards the solution of the porous medium equation, as $\var\rightarrow0$. Peng and Wasiolek \cite{Peng16AIHP} proposed structural stability conditions and constructed an approximate solution using a formal asymptotic expansion with initial layer corrections. It allowed to establish the  uniform local existence with respect to $\var$ and the convergence of \eqref{GEQSYM} to parabolic-type equations as $\var\rightarrow0$. Subsequently, under the Shizuta–Kawashima stability condition, they \cite{PengWasiolek} established the uniform global existence and the global-in-time convergence from \eqref{GEQSYM} to second-order nonlinear parabolic systems by using Aubin-Lions compactness arguments. In the spirit of the stream function approach of \cite{Junca}, Li, Peng and Zhao \cite{LiPengZhao1d} obtained explicit convergence rates for this relaxation process for $d=1$. Recently, the first author and Danchin \cite{CBD3,danchinnoterelaxation} observed that the partially dissipative hyperbolic system \eqref{GEQSYM} can be decomposed into a parabolic part and a damped part in the frequency region $|\xi|\lesssim \var^{-1}$ and justified the strong relaxation limit of diffusively rescaled solutions of \eqref{GEQSYM} globally in time in homogeneous critical Besov spaces with the explicit convergence rate.

\vspace{1mm}
However, the parabolic relaxation theory developed in \cite{CBD3,danchinnoterelaxation} is only applicable to  \eqref{GEQSYM}
with {\emph{symmetric}} relaxation matrices, where the Shizuta-Kawashima condition is well satisfied. In the present manuscript, we analyze the compressible Euler-Maxwell system \eqref{EMvar} and develop the corresponding theory for hyperbolic systems with non-symmetric relaxation.

\subsection{A first look at our strategy}\label{subsectionmotivation}
First, we characterize 
the dissipation structures of the system \eqref{EMvar} 
with respect to $\var$. We denote by $U_{L,\varepsilon}=(\rho-\bar{\rho},\varepsilon u,E,B-\bar{B})$ the solution to the linearization \eqref{EMvarl} of \eqref{EMvar}. In Proposition \ref{proppointwise}, it is shown that 
\begin{equation}\label{pointwise0}
\begin{aligned}
|U_{L,\varepsilon}(t,\xi)|^2\lesssim e^{\lambda_{\var}(|\xi|)t}|U_{L,\varepsilon}(0,\xi)|^2\quad\text{where}\quad \lambda_{\var}(|\xi|):=-\frac{c_0|\xi|^2}{(1+\var^2|\xi|^2)(1+|\xi|^2)}.
\end{aligned}
\end{equation}
Compared to \eqref{pointwisevar},  the pointwise estimate \eqref{pointwise0} allows us to 
keep track of the parameter $\var$. Consequently, the spectral behaviour of the solutions depending on the frequency-regions can be depicted as follows:
\begin{itemize}
    \item 
   $\lambda_{\var}(|\xi|)\sim -c_{0}|\xi|^2$, for $|\xi|\lesssim 1$ (the low-frequency region);
   \item $\lambda_{\var}(|\xi|)\sim -c_{0}$, \hspace{4mm} for  $1\lesssim |\xi|\lesssim 1/\var$ (the medium-frequency region); 
   \item $\lambda_{\var}(|\xi|)\sim -\frac{c_{0}}{ \var^2 |\xi|^2}$, \hspace{2mm} for $|\xi|\gtrsim 1/\var$  (the high-frequency region).
\end{itemize}
That is, the solutions behave like the heat kernel in low frequencies, undergo a damping effect in the medium frequencies and, in high frequencies, a loss of regularity occurs. The precise behaviour of each component is drawn in Table \ref{table:1}, see also \eqref{pointwise02}.
\begin{table}[h!]
    \centering
    \begin{tabular}{|c|c|c|c|}
       \hline & $|\xi|\leq 1$ & $1\leq |\xi|\leq \frac{C}{\var}$ & $|\xi|\geq \frac{C}{\var}$ \\
     \hline   $\rho^{\var}-\bar{\rho}$ &  Damped & Heat & Damped\\
      \hline   $u^{\var}$ &  Damped & Damped & Damped\\
     \hline   $E^{\var}$ &  Damped & Damped & Regularity-loss\\
     \hline   $B^{\var}-\bar{B}$ & Heat & Damped & Regularity-loss\\
     \hline
    \end{tabular} 
    \caption{Behaviours of each component of the Euler-Maxwell system \eqref{EMvarl}.}
    \label{table:1}
\end{table}

The above spectral analysis suggests us to split
the frequency space into three regimes: low, medium and high frequencies.  
This contrasts with \cite{CBD3,danchinnoterelaxation}, where only two frequency regimes employed. Moreover, due to the different behaviour observed in each regime, one must develop different {\emph{hypocoercivity}} methods in each regime to recover the expected dissipation properties stated in  Table \ref{table:1}. We design a functional framework allowing us to obtain uniform estimates with respect to $\varepsilon$. The framework we employ is depicted in Figure \ref{fig:1}.
\begin{figure}[!h]
    \centering
	\begin{tikzpicture}[xscale=0.4,yscale=0.4, thick]
	\draw [-latex] (-1,0) -- (21,0) ;
\draw  (21,0)  node [below]  {$|\xi|$} ;

\draw  (14,-0.2)  node [below]  {$\frac{C}{\var}$} ;
\draw  (7,-0.4)  node [below]  {$1$};
\draw  (-1,-0.2)  node [below]  {$0$} ;

\draw (14,0) node {$|$};
\draw (7,0) node {$|$};
\draw (-1,0) node {$|$};

\draw (17.5,1) node [above] {High};
\draw (17.5,0) node [above] {frequencies};
\draw (17.5,-0.4) node [below] {$\dot{B}^{\frac d2+1}$};

\draw (10.8,0.9) node [above] {Medium};
\draw (10.8,0) node [above] {frequencies};
\draw (10.8,-0.4) node [below] {$\dot{B}^{\frac d2}$};

\draw (3.3,1) node [above] {Low};
\draw (3.35,0) node [above] {frequencies};
\draw (3.35,-0.4) node [below] {$\dot{B}^{\frac d2-1}$};

	\end{tikzpicture}
    \caption{Frequency splitting for the Euler-Maxwell system \eqref{EM1}.}
        \label{fig:1}
\end{figure}
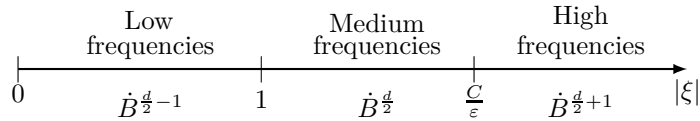

As $\var\rightarrow0$, we observe that the high-frequency regime disappears and the medium-frequency regime becomes the new high-frequency regime, see Figure \ref{fig:2}. This is coherent as the density $\rho^{\var}$ in the low and medium frequencies behaves like the solution $\rho^{*}$ of the limit drift-diffusion system \eqref{DD} (cf. Figure \ref{fig:2}). 

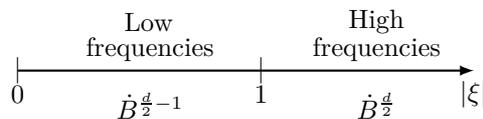
\begin{figure}[!h]
    \centering
	\begin{tikzpicture}[xscale=0.4,yscale=0.4, thick]
	\draw [-latex] (-1,0) -- (14,0) ;
\draw  (14,0)  node [below]  {$|\xi|$} ;
\draw  (7,-0.2)  node [below]  {$1$};
\draw  (-1,-0.2)  node [below]  {$0$} ;

\draw (7,0) node {$|$};
\draw (-1,0) node {$|$};

\draw (10.8,0.9) node [above] {High};
\draw (10.8,0) node [above] {frequencies};
\draw (10.8,-0.4) node [below] {$\dot{B}^{\frac d2}$};

\draw (3.3,1) node [above] {Low};
\draw (3.35,0) node [above] {frequencies};
\draw (3.35,-0.4) node [below] {$\dot{B}^{\frac d2-1}$};
	\end{tikzpicture}
    \caption{Frequency splitting for the drift-diffusion model \eqref{DD}.}
        \label{fig:2}
\end{figure}
Moreover, such a functional setting allows us to derive quantitative error estimates of solutions between \eqref{EMvar} and \eqref{DD}. A key ingredient  is the introduction of a new unknown: the effective velocity 
\begin{align}
z^{\var}:=u^{\var}+ \nabla h(\rho^{\var})+ E^{\var}+\var u^{\var}\times \bar{B},\label{dampmode}
\end{align}
which is associated with Darcy's law \eqref{darcy}. The unknown $z^\varepsilon$ satisfies stronger dissipative properties compared to the other components and exhibits a $\mathcal{O}(\varepsilon)$-bound. This is crucial to establish a global-in-time strong relaxation result in the whole space and derive the sharp convergence rate $\var$.




\section{Preliminaries and main results}\label{sec:mainr}

\subsection{Notations}
Before stating our main results, we explain the notations and definitions employed throughout the paper. $C>0$ denotes a constant independent of $\var$ and time, $f\lesssim g~(\text{resp.}\;f\gtrsim g)$ means $f\leq Cg~(\text{resp.}\;f\geq Cg)$, and $f\sim g$ stands for $f\lesssim g$ and $f\gtrsim g$. For any Banach space $X$ and functions $f,g\in X$, $\|(f,g)\|_{X}:=\|f\|_{X}+\|g\|_{X}$. For any $T>0$ and $1\leq \varrho\leq\infty$, we denote by $L^{\varrho}(0,T;X)$ the set of measurable functions $g:[0,T]\rightarrow X$ such that $t\mapsto \|g(t)\|_{X}$ is in $L^{\varrho}(0,T)$ and write $\|\cdot\|_{L^{\varrho}(0,T;X)}:=\|\cdot\|_{L^{\varrho}_{T}(X)}$.  $\mathcal{F}$ and $\mathcal{F}^{-1}$ stand for the Fourier transform and its inverse, respectively. In addition, we define $\Lambda^{\sigma}f:=\mathcal{F}^{-1}(|\xi|^{\sigma}\mathcal{F}f)$ for $\sigma\in \mathbb{R}$. 


\smallbreak

Following the pre-analysis in Subsection \ref{subsectionmotivation}, we introduce the threshold $J_{\var}$ between medium and high frequencies as
\begin{align}
    J_{\var}:=-[\log_{2}\var]+1,\label{J}
\end{align}
such that $2^{J_{\var}}\sim 1/\var$. 
The Littlewood-Paley decomposition and homogeneous Besov spaces emerge as natural tools for decomposing the analysis of our system in each frequency regime.
 We define the frequency-restricted Besov semi-norms corresponding to the three-regime decomposition:
\begin{equation}\nonumber
\begin{aligned}
&\|u\|_{\dot{B}^{s} }^{\ell}:=\sum_{j\leq 0} 2^{js}\|u_{j}\|_{L^{2}},\quad \|u\|_{\dot{B}^{s} }^{m}:=\sum_{-1\leq j\leq J_{\var}} 2^{js}\|u_{j}\|_{L^{2}}\quad \text{and}\quad\|u\|_{\dot{B}^{s} } ^{h}:=\sum_{j\geq  J_{\var}-1} 2^{js}\|u_{j}\|_{L^{2}},
\end{aligned}
\end{equation}
where 
$u_{j}:=\dot{\Delta}_{j}u$ and $\dot{\Delta}_{j}$ is the classical homogeneous Littlewood-Paley frequency-localization operator, see \cite[Chapter 2]{HJR}.  Analogously, we decompose $u=u^{\ell}+u^{m}+u^{h}$ as 
\begin{equation}\nonumber
\begin{aligned}
&u^{\ell}:=\sum_{j\leq -1} u_{j},\quad\quad u^{m}:=\sum_{0\leq j\leq J_{\var}-1} u_{j}\quad\text{and}\quad u^{h}:=\sum_{j\geq J_{\var}} u_{j}.
\end{aligned}
\end{equation}
Note that using Young's and Bernstein's inequalities, we have
$$
\|u^{\ell}\|_{\dot{B}^{s} }\lesssim\|u\|_{\dot{B}^{s} }^{\ell},\quad \|u^{m}\|_{\dot{B}^{s} }\lesssim\|u\|_{\dot{B}^{s} }^{m},\quad \|u^{h}\|_{\dot{B}^{s} }\lesssim \|u^{h}\|_{\dot{B}^{s} },
$$
and for any $s'>0$, the following inequalities hold true 
\begin{equation}\label{bernstein}
\left\{
\begin{aligned}
&\|u\|_{\dot{B}^{s} }^{\ell}\lesssim \|u\|_{\dot{B}^{s-s'} }^{\ell},\quad\quad\quad\quad  \quad \|u\|_{\dot{B}^{s} }^{m}\lesssim \|u\|_{\dot{B}^{s+s'} }^{m},\\
&\|u\|_{\dot{B}^{s} }^{m}\lesssim \var^{-s'}\|u\|_{\dot{B}^{s-s'} }^{m}, \quad\quad \quad~  \|u\|_{\dot{B}^{s} }^{h}\lesssim \var^{s'}\|u\|_{\dot{B}^{s+s'} }^{h}.
\end{aligned}
\right.
\end{equation}
 
\smallbreak

To justify the relaxation limit and analyse the drift-diffusion model \eqref{DD}, we also introduce the (independent of $\varepsilon$) hybrid Besov spaces
$$
\dot{B}^{s_{1},s_{2}}:=\{u\in \mathcal{S}_{h}'~: ~\|u\|_{\dot{B}^{s_{1},s_{2}}}:=\sum_{j\leq 0} 2^{js_{1}}\|u_{j}\|_{L^{2}}+\sum_{j\geq -1} 2^{js_{2}}\|u_{j}\|_{L^{2}}<\infty\},
$$
which verify the following properties: 
\begin{equation}\nonumber
\begin{aligned}
\dot{B}^{s_{1},s_{2}}=\dot{B}^{s_{1}}\quad&\text{if}\quad s_{1}=s_{2},\\
\dot{B}^{s_{1},s_{2}}=\dot{B}^{s_{1}}\cap\dot{B}^{s_{2}}\quad&\text{if}\quad s_{1}<s_{2},\\
\dot{B}^{s_{1},s_{2}}=\dot{B}^{s_{1}}+\dot{B}^{s_{2}}\quad&\text{if}\quad s_{1}>s_{2}.
\end{aligned}
\end{equation}
Furthermore, we denote the Chemin-Lerner spaces $\widetilde{L}^{\varrho}(0,T;\dot{B}^{s}_{p,r})$ by the function set in $L^{\varrho}(0,T;\mathcal{S}'_{h})$ endowed with the norm
\begin{eqnarray}
\nonumber\|u\|_{\widetilde{L}^{\varrho}_{T}(\dot{B}^{s})}:=
\begin{cases}
\underset{j\in\mathbb{Z}}{\sum}2^{js}\|u_{j}\|_{L^{\varrho}_{T}(L^{p})}<\infty,
& \mbox{if $1\leq \varrho<\infty,$ } \\
\underset{j\in\mathbb{Z}}{\sum}2^{js}\sup\limits_{t\in[0,T]}\|u_{j}\|_{L^{p}}<\infty,
& \mbox{if $\varrho=\infty.$}
\end{cases}
\end{eqnarray}
Using Minkowski's inequality, we have
\begin{equation}\nonumber
\begin{aligned}
&\|u\|_{\widetilde{L}^{1}_{T}(\dot{B}^{s})}= \|u\|_{L^{1}_{T}(\dot{B}^{s})}\quad \text{and}\quad \|u\|_{\widetilde{L}^{\varrho}_{T}(\dot{B}^{s})}\geq \|u\|_{L^{\varrho}_{T}(\dot{B}^{s})} \:\:\text{for}\:\: \varrho>1.
\end{aligned}
\end{equation}



\subsection{Main results}\label{subsectionmain}
To state our results, it is convenient to define the energy functional
\begin{equation}\label{Et}
\begin{aligned}
\mathcal{E}(a,u,E,H)&:=\|(a,\var u,E,H)\|_{\widetilde{L}^{\infty}_{t}( \dot{B}^{\frac{1}{2}} )}^{\ell}+\|(a,\var u,E,H)\|_{\widetilde{L}^{\infty}_{t}( \dot{B}^{\frac{3}{2}} )}^{m}+\var\|(a,\var u,E, H)\|_{\widetilde{L}^{\infty}_{t}(\dot{B}^{\frac{5}{2}} )}^{h},
\end{aligned}
\end{equation}
and the corresponding dissipation functional 
\begin{equation}\label{Dt}
\begin{aligned}
\mathcal{D}(a,u,E,H)&=\|a\|_{\widetilde{L}^{2}_{t}( \dot{B}^{\frac{1}{2}})}^{\ell}+\|u\|_{\widetilde{L}^{2}_{t}( \dot{B}^{\frac{1}{2}})}^{\ell}+\|E\|_{\widetilde{L}^{2}_{t}( \dot{B}^{\frac{1}{2}})}^{\ell}+\| H\|_{\widetilde{L}^{2}_{t}(\dot{B}^{\frac{3}{2}} )}^{\ell}\\
&~+\|a\|_{\widetilde{L}^{2}_{t}( \dot{B}^{\frac{5}{2}} )}^{m}+\|u\|_{\widetilde{L}^{2}_{t}( \dot{B}^{\frac{3}{2}})}^{m}+\|E\|_{\widetilde{L}^{2}_{t}( \dot{B}^{\frac{3}{2}})}^{m}+\| H\|_{\widetilde{L}^{2}_{t}(\dot{B}^{\frac{3}{2}} )}^{m}\\
&~+\|a\|_{\widetilde{L}^2_{t}(\dot{B}^{\frac{5}{2}} )}^{h}+\var\|u\|_{\widetilde{L}^{2}_{t}(\dot{B}^{\frac{5}{2}} )}^{h}+\|E\|_{\widetilde{L}^{2}_{t}( \dot{B}^{\frac{3}{2}})}^{h}+\| H\|_{\widetilde{L}^{2}_{t}(\dot{B}^{\frac{3}{2}} )}^{h}
\end{aligned}
\end{equation}
for $t>0$. The initial energy is denoted as follows:
\begin{equation}
\begin{aligned}
\mathcal{E}^{\var}_{0}:&=\|(\rho_{0}-\bar{\rho}, u_{0},E_{0},B_{0}-\bar{B})\|_{ \dot{B}^{\frac{1}{2}}}^{\ell}+\|(\rho_{0}-\bar{\rho},  u_{0},E_{0},B_{0}-\bar{B})\|_{\dot{B}^{\frac{3}{2}}}^{m}\\
&\quad+\var\|(\rho_{0}-\bar{\rho}, u_{0},E_{0},B_{0}-\bar{B})\|_{\dot{B}^{\frac{5}{2}}}^{h}.\label{E0}
\end{aligned}
\end{equation}

Our first result provides the global existence and uniqueness of classical solutions to \eqref{EMvar}-\eqref{EMdvar}, uniformly with respect to the relaxation parameter $\varepsilon$.

\begin{theorem}\label{theorem1}Let $0<\varepsilon\leq 1$.
There exists a constant $\alpha_{0}$ independent of $\var$
such that if 
\begin{equation}\label{a1}
\begin{aligned}
&\mathcal{E}^{\var}_{0}\leq \alpha_{0},
\end{aligned}
\end{equation}
then the Cauchy problem \eqref{EMvar}-\eqref{EMdvar} admits a unique global-in-time classical solution $(\rho^{\var},u^{\var},E^{\var},B^{\var})$ fulfilling $(\rho^{\var}-\bar{\rho},u^{\var},E^{\var}, B^{\var}-\bar{B})\in \mathcal{C}(\mathbb{R}^{+}; \dot{B}^{\frac{1}{2},\frac{5}{2}} )$. Moreover, the following uniform estimate holds{\rm:}
\begin{equation}\label{r1}
\begin{aligned}
&\mathcal{E}(\rho^{\var}-\bar{\rho},u^{\var},E^{\var},B^{\var}-\bar{B})+\mathcal{D}(\rho^{\var}-\bar{\rho},u^{\var},E^{\var},B^{\var}-\bar{B})\leq C\mathcal{E}^{\var}_{0},
\end{aligned}
\end{equation}
for all $t\in\mathbb{R}_{+}$,
where
$C>0$ is a uniform constant independent of $\var$ and $t$.
\end{theorem}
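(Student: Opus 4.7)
The plan is a standard bootstrap combined with local-in-time well-posedness. Adapting the arguments of \cite{XuEM, Majdalocal} to the hybrid critical framework $\dot B^{\frac{1}{2},\frac{5}{2}}$, the Cauchy problem \eqref{EMvar}--\eqref{EMdvar} admits a local classical solution on a maximal interval $[0,T^{\max}_{\varepsilon})$. If I can establish the uniform a priori bound $\mathcal E(t)+\mathcal D(t)\le C\mathcal E_0^{\varepsilon}$ on $[0,T^{\max}_{\varepsilon})$ under a bootstrap hypothesis $\mathcal E(t)+\mathcal D(t)\le \eta$ with $\eta$ small, then the smallness \eqref{a1} makes the bound self-improving, rules out blow-up, and yields both $T^{\max}_{\varepsilon}=+\infty$ and \eqref{r1}. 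Writing $a^{\varepsilon}:=\rho^{\varepsilon}-\bar\rho$ and $H^{\varepsilon}:=B^{\varepsilon}-\bar B$, I would split the system into its linearization at $(\bar\rho,0,0,\bar B)$ plus quadratic remainders treated perturbatively.

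\textbf{A priori estimate via three-regime hypocoercivity.} For each $j\in\mathbb Z$ I localize the linearized system with $\dot\Delta_j$ and construct a Lyapunov functional $\mathcal L_j(t)\sim\|(a_j^{\varepsilon},\varepsilon u_j^{\varepsilon},E_j^{\varepsilon},H_j^{\varepsilon})\|_{L^2}^2$, adding to the basic $L^2$ energy cross-products with small tunable coefficients so that $\tfrac{d}{dt}\mathcal L_j+\mathcal D_j(t)\lesssim \text{(nonlinear remainder)}$, with $\mathcal D_j$ matching the dissipation structure in Table~\ref{table:1}. Concretely: for $j\le 0$, cross terms $\langle u_j,\nabla a_j\rangle$, $\langle u_j,E_j\rangle$ and $\langle E_j,\nabla\times H_j\rangle$ exploit the pressure/Lorentz/Faraday couplings to produce damping of $a,u,E$ at $\dot B^{\frac{1}{2}}$ and heat-type dissipation of $H$ at $\dot B^{\frac{3}{2}}$; for $-1\le j\le J_{\varepsilon}$, one has $\varepsilon 2^j\lesssim 1$, so the $\varepsilon\partial_t$ prefactors on the Maxwell part render $E,B$ lower order, and the Poisson relation $\div E^{\varepsilon}=-a^{\varepsilon}$ combined with the cross term $\langle u_j,\nabla a_j\rangle$ delivers heat dissipation of $a$ at $\dot B^{\frac{5}{2}}$ together with damping of the remaining unknowns at $\dot B^{\frac{3}{2}}$; for $j\ge J_{\varepsilon}$, where $\varepsilon 2^j\gtrsim 1$, the Maxwell dynamics become slow and no uniform pure damping of $E$ or $B$ is available---instead I introduce the effective velocity $z^{\varepsilon}$ of \eqref{dampmode}, which satisfies a damped evolution and supplies the $\varepsilon$-weighted damping $\varepsilon\|u^{\varepsilon}\|^{h}_{\widetilde L^2_t(\dot B^{\frac{5}{2}})}$ while $E$ and $H$ dissipate only with a one-derivative loss in $\widetilde L^2_t(\dot B^{\frac{3}{2}})$. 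Summing $\sqrt{\mathcal L_j}$ against the weights $2^{j/2}$, $2^{3j/2}$, and $\varepsilon 2^{5j/2}$ over the three regimes reconstructs exactly $\mathcal E$ and $\mathcal D$ as defined in \eqref{Et}--\eqref{Dt}.

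\textbf{Nonlinear control, uniqueness, and main obstacle.} The nonlinear contributions---the convective term $\div(a^{\varepsilon}u^{\varepsilon})$ and $(u^{\varepsilon}\cdot\nabla)u^{\varepsilon}$, the composition errors in $P(\rho^{\varepsilon})$ and the enthalpy, and the Lorentz coupling $\rho^{\varepsilon}(E^{\varepsilon}+\varepsilon u^{\varepsilon}\times B^{\varepsilon})$---are dominated block-by-block by classical paraproduct and composition estimates in the Chemin--Lerner spaces $\widetilde L^{\varrho}_T(\dot B^s)$, yielding a right-hand side of the form $C(\mathcal E(t)+\mathcal E(t)^2)\,\mathcal D(t)$, which is absorbed under the smallness \eqref{a1}. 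Uniqueness follows from a standard stability estimate on the difference of two solutions in the same space. The main technical obstacle is the high-frequency regime: the non-symmetric relaxation forbids uniform pure damping of $E$ and $B$ there, so the only available dissipation comes with a loss of derivatives, and the $\varepsilon$-weights in $\mathcal E$ and $\mathcal D$ must be calibrated so that (i) the Lyapunov functionals remain positive definite and equivalent to the $L^2$ block energies across the transition scales $|\xi|\sim 1$ and $|\xi|\sim 1/\varepsilon$, and (ii) the $\varepsilon$-factor generated by the effective unknown $z^{\varepsilon}$ is tracked consistently in both the linear Lyapunov inequalities and the nonlinear paraproduct bounds.
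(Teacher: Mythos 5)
Your overall architecture --- local well-posedness plus a bootstrap, the three-regime frequency splitting at $|\xi|\sim 1$ and $|\xi|\sim 1/\varepsilon$, and localized Lyapunov functionals built from the cross terms $\langle u_j,\nabla n_j\rangle$, $\langle u_j,E_j\rangle$, $\langle E_j,\nabla\times H_j\rangle$ with regime-dependent weights --- is exactly the paper's strategy (Lemmas \ref{lemmalow}--\ref{lemmahigh} and Proposition \ref{propapriori}). Two points, however, need attention.

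First, a genuine gap: the claim that the nonlinearities are ``dominated block-by-block by classical paraproduct and composition estimates'' with output $C(\mathcal E(t)+\mathcal E(t)^2)\mathcal D(t)$ fails for the quasilinear terms in the medium- and high-frequency regimes. Treating $\dot\Delta_j(G(n)\operatorname{div} u)$ as a pure source in the $n$-equation at the level $\dot B^{\frac32}$ (medium) or $\dot B^{\frac52}$ (high) requires $u$ at one derivative more than the dissipation functional provides; by Bernstein's inequality in those regimes this costs a factor $\varepsilon^{-1}$, so the a priori estimate does not close uniformly in $\varepsilon$. The paper resolves this by symmetrizing the localized equations --- keeping $(P'(\bar\rho)+G(n))\operatorname{div} u_j$ on the left-hand side and pairing the $u_j$-equation with $(P'(\bar\rho)+G(n))u_j$ --- and by introducing the commutators $[G(n),\dot\Delta_j]\operatorname{div} u$, $[u,\dot\Delta_j]\nabla n$, $[u,\dot\Delta_j]\nabla u$, estimated via Lemma \ref{lemmacom}, which recover the missing derivative. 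The same device is required for $u\cdot\nabla n$ and $\varepsilon^2 u\cdot\nabla u$ in the high-frequency regime. Without it your nonlinear closure is not uniform in $\varepsilon$, which is the whole point of the theorem.

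Second, a detour rather than an error: you invoke the effective velocity $z^{\varepsilon}$ to produce the high-frequency damping $\varepsilon\|u^{\varepsilon}\|^{h}_{\widetilde L^2_t(\dot B^{\frac52})}$. The paper does not use $z^{\varepsilon}$ anywhere in the proof of Theorem \ref{theorem1}; it is reserved for the relaxation limit. That bound comes for free from the basic energy identity: the energy carries $P'(\bar\rho)\varepsilon^2|u_j|^2$ while the friction term contributes $P'(\bar\rho)\|u_j\|_{L^2}^2$ to the dissipation, so integrating in time already yields the $\varepsilon$-weighted $L^2_t$ control of $u_j$. Moreover, $z^{\varepsilon}=u^{\varepsilon}+\nabla h(\rho^{\varepsilon})+E^{\varepsilon}+\varepsilon u^{\varepsilon}\times\bar B$ mixes components whose high-frequency regularities differ by one derivative and by a factor of $\varepsilon$ in the functional framework \eqref{Et}--\eqref{Dt}, so carrying it through the $\dot B^{\frac52}$ high-frequency estimate would create bookkeeping you do not need.
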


\begin{remark}As observed in Table \ref{table:1}, the non-symmetric relaxation term induces a one-regularity loss phenomenon in the high-frequency regime. To deal with this difficulty, we partition the frequency space into three distinct regimes associated with the different behaviour of the solution. In addition, Theorem \ref{theorem1} provides a larger regularity framework for the well-posedness of classical solutions of \eqref{EMvar}-\eqref{EMdvar}. This can be observed in the following chain of Sobolev embeddings
$$
H^{s}(s>\frac{5}{2})\hookrightarrow B^{\frac{5}{2}}\hookrightarrow \dot{B}^{\frac{1}{2},\frac{5}{2}}\hookrightarrow \mathcal{C}^1\cap W^{1,\infty}.
$$ 
The left space corresponds to the classical Sobolev theory, see for instance \cite{Duan2011,PYJEMSIAM,UedaKawa2011,UedaWangKawa2012}. Compared to the result in the inhomogeneous Besov space $B^{\frac{5}{2}}$, see \cite{XuEM,XuEMtwo}, the result of the present paper -- $\dot{B}^{\frac{1}{2},\frac{5}{2}}$-- allows to assume less regularity on the low frequencies of the initial data, i.e. $B^{1/2}$ rather than $L^2$.

\end{remark}






\begin{remark}
Theorem \ref{theorem1} provides a sharp smallness condition \eqref{a1} for the global existence of the Euler-Maxwell system. Notice that we only assume the low and medium-frequency norms of initial data to be small, the high-frequency norm, actually, can be arbitrarily large when $\var$ is suitably small. This comes from the fact that as $\var\rightarrow0$,  the high-frequency regime disappears and the medium-frequency regime becomes the new high-frequency one. See Figure \ref{fig:2}.
\end{remark}




Next, we establish quantitative error estimates for ill-prepared initial data, which leads to   the strong relaxation limit from (\ref{EMvar})-\eqref{EMdvar} to (\ref{DD}). 

\begin{theorem}\label{theorem4}
Let $0<\var\leq 1$ and $(\rho^{\var},u^{\var},E^{\var},B^{\var})$ be the solution of \eqref{EMvar}-\eqref{EMdvar} from Theorem \ref{theorem1}. Let $\rho^{*}$ be the solution of \eqref{DD} associated to the initial datum $\rho^{*}_{0}$ given by Theorem \ref{theorem2}.
Define the effective velocity
\[ z^{\var}:=u^{\var}+\nabla h(\rho^{\var})+E^{\var}+\var u^{\var}\times\bar{B} \]
and its initial datum
\[  z^{\var}_{0}:=\frac{1}{\var}u_{0}+\nabla h(\rho_{0})+E_0+u_0\times\bar{B}. \]
Then, it holds that
\begin{equation}
\begin{aligned}
& \|z^{\var}-z^{\var}_{L}\|_{\widetilde{L}^2_{t}(\dot{B}^{\frac{1}{2}} )}\leq C\var,\quad t\in\mathbb{R}_{+},\label{enhancez0}
\end{aligned}
\end{equation}
where $C>0$ is a constant independent of $\var$ and time, and $z^{\var}_{L}:=e^{-\frac{t}{\var^2}}z^{\var}_0$ is the initial layer correction of $z^{\var}$.

Let $E^{*}=\nabla(-\Delta)^{-1}(\rho^{*}-\bar{\rho})$ associated with its initial datum $E_{0}^{*}=\nabla(-\Delta)^{-1}(\rho_{0}^{*}-\bar{\rho})$ and set $B^{*}=\bar{B}$. 
If we assume $\rho_0^*-\bar{\rho}\in \dot{B}^{\frac{1}{2}}$ and 
\begin{align}
\|(\rho_{0}-\rho_{0}^*,E_{0}  -E_{0}^{*},B_{0}-\bar{B})\|_{\dot{B}^{\frac{1}{2}}}\leq \var,\label{aerror2}
\end{align}
then, for all $t\in \R_+$, 
\begin{equation}\label{convergence2}
\begin{aligned}
&\|\rho^{\var}  -\rho^{*}\|_{\widetilde{L}^{\infty}_{t}(\dot{B}^{\frac{1}{2}} )\cap \widetilde{L}^{2}_{t}( \dot{B}^{\frac{1}{2},\frac{3}{2}} )}+\|u^{\var}  -u^{*}-u^{\var}_{L}\|_{\widetilde{L}^{2}_{t}(\dot{B}^{\frac{1}{2}} )}\\
&\quad\quad\quad\quad+\|E^{\var} -E^{*}\|_{\widetilde{L}^{\infty}_{t}(\dot{B}^{\frac{1}{2}})\cap \widetilde{L}^{2}_{t}(\dot{B}^{\frac{1}{2}})}+\|B^{\var}-B^{*}\|_{\widetilde{L}^{\infty}_{t}(\dot{B}^{\frac{1}{2}})\cap\widetilde{L}^{2}_{t}(\dot{B}^{\frac{3}{2},\frac{1}{2}})} \leq C \var,
\end{aligned}
\end{equation}
where $u_{L}^{\var}:=e^{-\frac{t}{\var^2}}\frac{1}{\var}u_0$ is the initial layer correction of $u^{\var}$. 
\end{theorem}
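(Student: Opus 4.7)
\textbf{Proposal for proof of Theorem \ref{theorem4}.} My plan is to split the argument into two parts: a sharpened dissipation estimate for the effective velocity $z^\varepsilon$, followed by a perturbative analysis of the differences $(\rho^\varepsilon-\rho^*,\,u^\varepsilon-u^*,\,E^\varepsilon-E^*,\,B^\varepsilon-\bar B)$ seen as a parabolic system perturbed by $O(\varepsilon)$ source terms. First I would rewrite the momentum equation of \eqref{EMvar}, after dividing by $\rho^\varepsilon$, as
\begin{equation*}
\varepsilon^{2}\partial_{t}u^{\varepsilon}+z^{\varepsilon}=-\varepsilon\, u^{\varepsilon}\times(B^{\varepsilon}-\bar B)-\varepsilon^{2}(u^{\varepsilon}\cdot\nabla)u^{\varepsilon},
\end{equation*}
which already exhibits the damping $z^\varepsilon/\varepsilon^{2}$. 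Differentiating $z^\varepsilon$ in time and substituting the equations for $\rho^\varepsilon$, $E^\varepsilon$ and $u^\varepsilon$ (noting $\nabla\times\bar B=0$) yields
\begin{equation*}
\varepsilon^{2}\partial_{t}z^{\varepsilon}+\bigl(I+\varepsilon\,\bar B\,\times\bigr)z^{\varepsilon}=\varepsilon\,\nabla\times(B^{\varepsilon}-\bar B)+\varepsilon\,R^{\varepsilon}(\rho^\varepsilon,u^\varepsilon,E^\varepsilon,B^\varepsilon-\bar B),
\end{equation*}
where $R^{\varepsilon}$ collects all the remaining quadratic and source terms.

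For the first bound \eqref{enhancez0}, the operator $I+\varepsilon\,\bar B\times$ is an $O(\varepsilon)$-perturbation of the identity, so Duhamel's formula gives
\begin{equation*}
z^{\varepsilon}(t)=e^{-t/\varepsilon^{2}}z^{\varepsilon}_{0}+\frac{1}{\varepsilon^{2}}\int_{0}^{t}e^{-(t-s)/\varepsilon^{2}}\,\bigl[\varepsilon\,\nabla\times(B^{\varepsilon}-\bar B)+\varepsilon R^{\varepsilon}\bigr](s)\,ds+O(\varepsilon)\text{-correction}.
\end{equation*}
Applying Young's convolution inequality in time together with $\|e^{-t/\varepsilon^{2}}\|_{L^{1}_{t}}=\varepsilon^{2}$, I would control $z^\varepsilon - z^\varepsilon_L$ in $\widetilde L^{2}_{t}(\dot B^{1/2})$ by $\varepsilon$ times $\|\nabla\times(B^\varepsilon-\bar B)\|_{\widetilde L^{2}_{t}(\dot B^{1/2})}+\|R^\varepsilon\|_{\widetilde L^{2}_{t}(\dot B^{1/2})}$. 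Each of these is uniformly bounded thanks to Theorem \ref{theorem1}: the magnetic-field term reduces to $\|B^\varepsilon-\bar B\|_{\widetilde L^{2}_{t}(\dot B^{3/2})}$ which appears in $\mathcal D$, while the quadratic parts are handled by standard Besov product laws using the fact that $\varepsilon u^\varepsilon$ and $(\rho^\varepsilon-\bar\rho,E^\varepsilon)$ are small in $\widetilde L^\infty_{t}(\dot B^{1/2,3/2})$.

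For the convergence to the drift-diffusion model, I would derive equations for $\delta\rho:=\rho^\varepsilon-\rho^*$, $\delta E:=E^\varepsilon-E^*$, $\delta B:=B^\varepsilon-\bar B$. Using $\rho^\varepsilon u^\varepsilon=\rho^\varepsilon z^\varepsilon-\nabla P(\rho^\varepsilon)-\rho^\varepsilon E^\varepsilon-\varepsilon\rho^\varepsilon u^\varepsilon\times\bar B$ and Darcy's law $\rho^*u^*=-\nabla P(\rho^*)-\rho^*E^*$, the continuity equation for $\delta\rho$ becomes
\begin{equation*}
\partial_{t}\delta\rho-\Delta\bigl(P(\rho^\varepsilon)-P(\rho^*)\bigr)-\div\bigl(\rho^\varepsilon E^\varepsilon-\rho^*E^*\bigr)=-\div(\rho^\varepsilon z^\varepsilon)+\varepsilon\,\div(\rho^\varepsilon u^\varepsilon\times\bar B),
\end{equation*}
a parabolic-type equation with source terms controlled by $O(\varepsilon)$ thanks to the first part. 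Since $\div\delta E=-\delta\rho$ and $E^*$ is curl-free, the divergence component of $\delta E$ is slaved to $\delta\rho$ through $(-\Delta)^{-1}$; the remaining divergence-free part, together with $\delta B$, satisfies the rescaled Maxwell system $\varepsilon\partial_t\delta E-\nabla\times\delta B=\varepsilon\rho^\varepsilon u^\varepsilon$, $\varepsilon\partial_t\delta B+\nabla\times\delta E=0$, whose dissipation in low frequencies is inherited from Theorem \ref{theorem1}.

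The remaining step is a standard $\dot B^{1/2}$ energy estimate on the perturbed parabolic-hyperbolic system for $(\delta\rho,\delta E,\delta B)$, using the small initial gap \eqref{aerror2} and the $O(\varepsilon)$ source arising from $z^\varepsilon-z^\varepsilon_L$, $z^\varepsilon_L$ (whose $\widetilde L^{2}_{t}(\dot B^{1/2})$-norm is $O(\varepsilon)$ by direct integration of $e^{-t/\varepsilon^{2}}$), and the Lorentz-type correction $\varepsilon\rho^\varepsilon u^\varepsilon\times\bar B$. The velocity difference $u^\varepsilon-u^*-u^\varepsilon_L$ will be reconstructed a posteriori from $\delta\rho$, $\delta E$ and $z^\varepsilon$ via the identity $u^\varepsilon=z^\varepsilon-\nabla h(\rho^\varepsilon)-E^\varepsilon-\varepsilon u^\varepsilon\times\bar B$. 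The main obstacle I anticipate is the sharp bookkeeping of the three frequency regimes when applying product laws to $\rho^\varepsilon z^\varepsilon$ and $\rho^\varepsilon u^\varepsilon\times\bar B$: one must exploit that $z^\varepsilon$ has extra smallness in low and medium frequencies (from the damping) while in high frequencies one uses the $\varepsilon$-weighted norm $\varepsilon\|u^\varepsilon\|^{h}_{\widetilde L^{2}_{t}(\dot B^{5/2})}$ in $\mathcal D$ to convert regularity into powers of $\varepsilon$, which is exactly the role of the three-regime splitting introduced in Section \ref{subsectionmotivation}.
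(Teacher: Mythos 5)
Your first step (the $O(\varepsilon)$ bound for $z^{\var}-z^{\var}_{L}$) is essentially the paper's Proposition \ref{propenhancez}: you exploit the same damping $\var^{-2}z^{\var}$, the skew-symmetry of $\var^{-1}\bar B\times$, and the fact that $\var\nabla\times H^{\var}$ and the quadratic remainder are uniformly bounded in $\widetilde L^2_t(\dot B^{1/2})$ by Theorem \ref{theorem1}; whether one runs Duhamel or the paper's localized energy estimate is immaterial. The $\delta\rho$ equation and the a posteriori reconstruction of $u^{\var}-u^{*}-u^{\var}_{L}$ from $z^{\var}$, $\delta\rho$, $\delta E$ also match the paper.

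The gap is in your treatment of the Maxwell part. The system you write, $\var\partial_t\delta E-\nabla\times\delta B=\var\rho^{\var}u^{\var}$, drops the contribution of $\partial_t E^{*}$. Since $E^{*}=\nabla(-\Delta)^{-1}(\rho^{*}-\bar\rho)$ one has
\begin{equation*}
\partial_t E^{*}=\rho^{*}u^{*}+\nabla\times B^{1,*},\qquad B^{1,*}=-(-\Delta)^{-1}\nabla\times(\rho^{*}u^{*}),
\end{equation*}
so after dividing by $\var$ the correct $\delta E$ equation carries the term $-\nabla\times B^{1,*}=\mathbb{P}(\rho^{*}u^{*})$, which has \emph{no} power of $\var$ in front of it; equivalently, even if you work only with the divergence-free part of $E^{\var}$, the source $\mathbb{P}(\rho^{\var}u^{\var})$ converges to the nonzero limit $-\nabla\times B^{1,*}$ rather than to $0$. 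A direct energy estimate on $(\delta E,\delta B)$ therefore only yields an $O(1)$ bound. The paper removes this obstruction by the auxiliary unknown $\delta\mathcal{B}:=\delta B+\var B^{1,*}$, which absorbs $\nabla\times B^{1,*}$ into $\tfrac1\var\nabla\times\delta\mathcal{B}$ at the cost of a genuinely $O(\var)$ source $\var\partial_t B^{1,*}$, and then needs the uniform bounds on $B^{1,*}(0)$, $B^{1,*}$ and $\partial_t B^{1,*}$ of Lemma \ref{lemmaB1} (which is why the extra low-frequency assumption on $\rho_0^{*}-\bar\rho$ enters). This device, or an equivalent one, is missing from your plan. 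A secondary issue: slaving the gradient part of $\delta E$ to $\delta\rho$ via $\nabla(-\Delta)^{-1}$ costs a derivative, i.e. $\|\nabla(-\Delta)^{-1}\delta\rho\|_{\dot B^{1/2}}=\|\delta\rho\|_{\dot B^{-1/2}}$, and $\delta\rho$ is only controlled in $\dot B^{1/2,3/2}$; the paper instead estimates $\delta E$ directly through the hypocoercive functional for \eqref{delta2}, using $\div\delta E=-\delta\rho$ only to generate the dissipation term $-P'(\bar\rho)\nabla\div\delta E$.
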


\begin{remark}







Theorem \ref{theorem4} is, to the best of our knowledge,  the first result providing global-in-time convergence rates of the compressible Euler-Maxwell system towards the drift-diffusion system in $\mathbb{R}^3$. Thanks to the initial layer corrections $z_{L}^{\var}$ and $u_{L}^{\var}$ in \eqref{enhancez0} and \eqref{convergence2}, the strong convergence can hold for general ill-prepared initial data.


\end{remark}

\subsection{Strategy to derive error estimates}
We now explain the strategies for establishing error estimates of the relaxation limit from \eqref{EMvar} to \eqref{DD}. 
Our first step is the introduction of the effective velocity $z^{\var}$ which reveals the convergence of $u^{\var}+\nabla h(\rho^{\var})+E^{\var}$ towards Darcy's law \eqref{darcy}. Let $(\delta \rho, \delta u,\delta E,\delta B):=(\rho^{\var}-\rho^{*},u^{\var}-u^{*},E^{\var} -E^{*},B^{\var} -B^{*})$ be the error unknowns. We observe that $\delta \rho$ satisfies
\begin{equation}\label{errorrho}
\begin{aligned}
&\partial_{t}\delta\rho-P'(\bar{\rho})\Delta\delta\rho+\bar{\rho}\delta\rho=-\bar{\rho}\div  z^{\var}+\var\bar{\rho} \div(u^{\var}\times\bar{B})+\text{nonlinear terms},
\end{aligned}
\end{equation}
where the left-hand side of \eqref{errorrho} presents a priori estimates of the linearized drift-diffusion system, and the term $\var\bar{\rho} \div(u^{\var}\times\bar{B})$ give an $\mathcal{O}(\var)$-bound due to \eqref{r1}. Hence, one has to establish the decay-in-$\varepsilon$ of the remainder term $-\bar{\rho}\div  z^{\var}$. On the one hand, we find that  $z^{\var}$ 
satisfies
\begin{align}
\partial_{t} z^{\var}+\frac{1}{\var^2 }z^{\var}-\frac{1}{\var}z\times \bar{B}=\text{higher-order linear and nonlinear terms}.\nonumber
\end{align}
The above damping structure enables us to derive $\mathcal{O}(\varepsilon)$-bounds for $z^{\var}$ (see Proposition \ref{propenhancez}) and thus to control $-\bar{\rho}\div  z^{\var}$.
On the other hand, we reformulate the system of $(\delta E,\delta B)$ in terms of the effective velocity $z^{\var}$:
\begin{equation}\label{deltaBH}
\left\{
\begin{aligned}
&\partial_{t}\delta E-\frac{1}{\var}\nabla\times \delta B+\bar{\rho}\delta E-P'(\bar{\rho})\nabla\div \delta E=\nabla\times B^{1,*}+\bar{\rho}z^{\var}-\var\bar{\rho}u^{\var}\times\bar{B}+\text{nonlinear terms},\\
&\partial_{t}\delta B+\frac{1}{\var}\nabla\times \delta E=0,\\
&\div \delta E=-\delta\rho,\quad\quad \div \delta B=0,
\end{aligned}
\right.
\end{equation}
with  $B^{1,*}=-(-\Delta)^{-1}\nabla\times(\rho^{*}u^{*})$. One can see that the  dissipative structure of \eqref{deltaBH} share similarities with that of the compressible Euler system with damping. Consequently, we derive qualitative estimates for $(\delta E,\delta B)$ by  employing a {\emph{hypocoercivity}} argument as in \cite{BZ,CBD3}. Nevertheless,  there is an additional difficulty arising from the term $B^{1,*}$, which lacks the $\mathcal{O}(\varepsilon)$-bound in  \eqref{deltaBH} in fact. To overcome it, we employ the \textit{auxiliary} unknown
$$
\delta\mathcal{B}:=\delta B+\var B^{1,*}
$$
which allows us to rewrite \eqref{deltaBH} in terms of $(\delta \mathcal{B}, \delta E)$ without the term $\nabla\times B^{1,*}$ and establish desired convergence estimates. See Subsection \ref{subsectionill} for more details.

\vspace{2mm}
\subsection{Outline of the paper}
The rest of the paper unfolds as follows. 
In Section \ref{sectiontheorem1}, we derive uniform a priori estimates for \eqref{EM1} and prove the global well-posedness of the Cauchy problem \eqref{EMvar}-\eqref{EMdvar} (Theorem \ref{theorem1}). Section \ref{sectionrelaxation} is dedicated to the justification of the strong relaxation limit from \eqref{EMvar}-\eqref{EMdvar} to \eqref{DD} (Theorem \ref{theorem4}). 
  Finally, technical lemmas that are used throughout the manuscript are presented in Appendix \ref{app1}.

\section{Global well-posedness for the Euler-Maxwell system}\label{sectiontheorem1}
In this section, we focus on the proof of Theorem \ref{theorem1}. We simplify the notations of unknowns by omitting the superscript $\var$. Define 
\begin{align}
n :=h(\rho )-h(\Bar{\rho} ),\quad\quad H: =B -\bar{B},\label{enth}
\end{align}
where $h(\rho)$ is the enthalpy satisfying $h'(\rho)=P'(\rho)/\rho$. The system \eqref{EMvar} for $(t,x)\in [0,\infty)\times\mathbb{R}^3$ can be rewritten as
\begin{equation}\label{EM1}
\left\{
\begin{aligned}
&\partial_{t}n+P'(\bar{\rho})\div u  =-u \cdot\nabla n -G(n )\div u ,\\
&\var^2(\partial_{t}u +u \cdot\nabla u )+\nabla n +E + u +\var u \times \bar{B}=-\var u \times H ,\\
&\var\partial_{t}E -\nabla\times H -\var\bar{\rho} u =\var F(n )u ,\\
&\var\partial_{t}H +\nabla\times E =0,\\
&\div E =-Kn -\Phi(n ),\\
&\div H =0,\\
&(n ,u ,E ,H )(0,x)=(n_{0}, \frac{1}{\var}u_0,E_{0},H_{0})(x),
\end{aligned}
\right.
\end{equation}
with 
\begin{equation}
\left\{
\begin{aligned}
&n_{0}:=h(\rho_{0} )-h(\Bar{\rho} ), \quad H_{0}:=B_{0}-\bar{B},\\
&K:=\rho'(0)=\frac{\bar{\rho}}{P'(\bar{\rho})},\\
&G(n):=P'(\rho)-P'(\bar{\rho}),\quad\quad F(n):=\rho(n)-\bar{\rho},\quad\quad \Phi(n):= \rho(n)-\bar{\rho}- K n .\label{GFPHI}
\end{aligned}
\right.
\end{equation}
Note that $\Phi(n)=\mathcal{O}(|n|^2)$ if $n$ is uniformly bounded. For clarity, we split the proof into several subsections.
\subsection{Pointwise estimates for the linearized Euler-Maxwell system}\label{sectionpointwise}

In order to get a priori estimates with optimal regularity, we first derive pointwise estimates for the following linearized Euler-Maxwell system
\begin{equation}\label{EMvarl}
\left\{
\begin{aligned}
&\partial_{t}n+P'(\bar{\rho})\div u =0,\\
&\var^2\partial_{t}u+\nabla n+E+u+\var u\times \bar{B}=0,\\
&\var\partial_{t}E-\nabla\times H-\var\bar{\rho} u=0,\\
&\var\partial_{t}H+\nabla\times E=0,\\
&\div E=-Kn,\quad\quad \div H=0.\\
\end{aligned}
\right.
\end{equation}
In what follows, we  employ a hypocoercivity argument and deduce uniform-in-$\varepsilon$ pointwise estimates for  \eqref{EMvarl}, which   provide us an insight into the evolution of the dissipation rates with respect to $\var$.


\begin{proposition}\label{proppointwise}
For $0<\var\leq1$, let $(n,u,E,H)$ be the solution to the system \eqref{EMvarl}. Then there exists a functional $\mathcal{L}_{\xi}(t)\sim |(\widehat{n}, \var \widehat{u},\widehat{E},\widehat{H})(t,\xi)|^2$ and a constant $c_{0}=c_{0}(\bar{\rho},\bar{B},P'(\bar{\rho}))>0$ such that
\begin{equation}\label{pointwise02}
\begin{aligned}
& \frac{d}{dt}\mathcal{L}_{\xi}(t)+c_{0} |\widehat{u}|^2 + \frac{c_{0}( 1+|\xi|^2)}{1+\var^2|\xi|^2}|\widehat{n}|^2\\
&\quad\quad\quad\quad+\frac{ c_{0}}{1+\var^2|\xi|^2}|\widehat{E}|^2+\frac{c_{0} |\xi|^2}{(1+\var^2|\xi|^2)(1+|\xi|^2)}|\widehat{H}|^2\leq 0.
\end{aligned}
\end{equation}
Furthermore, we have 
\begin{equation}\label{pointwise}
\begin{aligned}
&|(\widehat{n}, \var \widehat{u},\widehat{E},\widehat{H})(t,\xi)|^2\lesssim e^{\lambda_{\var}(|\xi|)t}(\widehat{n}, \var \widehat{u},\widehat{E},\widehat{H})(0,\xi)|^2,\quad\quad t>0,\quad \xi\in\mathbb{R}^{d},
\end{aligned}
\end{equation}
where $\lambda_{\var}(|\xi|)$ is given by
\begin{align}
\lambda_{\var}(|\xi|)=-\frac{c_{0} |\xi|^2}{(1+\var^2|\xi|^2)(1+|\xi|^2)}.\nonumber
\end{align}
\end{proposition}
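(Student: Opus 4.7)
The plan is to construct a Lyapunov functional $\mathcal L_\xi(t)$ of the form ``basic energy plus skew-perturbations,'' in the Villani--Beauchard--Zuazua hypocoercivity spirit, with $\xi$- and $\varepsilon$-dependent weights tuned so that the resulting dissipation rates match the three different behaviours announced in Table \ref{table:1}. First, I would Fourier-transform \eqref{EMvarl} and build the basic symmetric energy
$$
\mathcal E_0(t,\xi):=\tfrac{1}{P'(\bar\rho)}|\widehat n|^2+\varepsilon^2|\widehat u|^2+\tfrac{1}{\bar\rho}|\widehat E|^2+\tfrac{1}{\bar\rho}|\widehat H|^2.
$$
The scalar weights are normalised precisely so that the antisymmetric couplings $(\widehat n,\widehat u)$, $(\widehat u,\widehat E)$, $(\widehat E,\widehat H)$ produced by $\nabla$, multiplication, and $\nabla\times$ all cancel pairwise. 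The Lorentz term $\varepsilon\widehat u\times\bar B$ disappears automatically because $(\widehat u\times\bar B)\cdot\overline{\widehat u}\in i\mathbb R$. This yields the identity $\tfrac{d}{dt}\mathcal E_0+2|\widehat u|^2=0$, which gives the $|\widehat u|^2$-dissipation valid in all frequency regimes and the equivalence $\mathcal E_0\sim|(\widehat n,\varepsilon\widehat u,\widehat E,\widehat H)|^2$.

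Next, to uncover the missing dissipation on $\widehat n$, $\widehat E$ and $\widehat H$, I would introduce three cross functionals playing the role of Kawashima compensating terms, typically
$$
L_n:=-\mathrm{Re}(i\xi\widehat n\cdot\overline{\widehat u}),\qquad L_E:=\mathrm{Re}(\widehat u\cdot\overline{\widehat E}),\qquad L_H:=-\mathrm{Re}(i\xi\times\widehat H\cdot\overline{\widehat E}),
$$
and compute their time derivatives along the Fourier-transformed system. After substituting for $\varepsilon^2\partial_t\widehat u$ via the $u$-equation and exploiting the divergence constraints $i\xi\cdot\widehat E=-K\widehat n$ and $\xi\cdot\widehat H=0$, each identity produces (after multiplication by a suitable power of $\varepsilon$) the expected dissipation term --- $|\xi|^2|\widehat n|^2$ for $L_n$, $|\widehat E|^2$ for $L_E$, $|\xi|^2|\widehat H|^2$ for $L_H$ --- modulo cross terms that are either absorbable by the basic $|\widehat u|^2$-dissipation via Young's inequality or by the other cross-functional dissipations themselves.

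I would then set
$$
\mathcal L_\xi:=\mathcal E_0+\kappa_n(\xi)L_n+\kappa_E(\xi)L_E+\kappa_H(\xi)L_H,
$$
with positive weights small enough to preserve the equivalence $\mathcal L_\xi\sim|(\widehat n,\varepsilon\widehat u,\widehat E,\widehat H)|^2$, yet of the precise order required to produce the sharp dissipation rates in \eqref{pointwise02}. The natural ansatz is
$$\kappa_n\sim\frac{\varepsilon^2}{(1+|\xi|^2)(1+\varepsilon^2|\xi|^2)},\qquad \kappa_E\sim\frac{\varepsilon^2}{1+\varepsilon^2|\xi|^2},\qquad \kappa_H\sim\frac{\varepsilon}{(1+|\xi|^2)(1+\varepsilon^2|\xi|^2)},$$
where the $(1+\varepsilon^2|\xi|^2)$ factor captures the high-frequency degeneracy induced by the non-symmetric relaxation (damping only on $\widehat u$), and the extra $(1+|\xi|^2)$ factor accounts for the regularity-loss phenomenon on $\widehat H$. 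Once \eqref{pointwise02} is in place, the pointwise bound \eqref{pointwise} follows by Gronwall's inequality, since a direct inspection shows that each of the four dissipation contributions in \eqref{pointwise02} dominates $|\lambda_\varepsilon(|\xi|)|\,\mathcal L_\xi$.

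The main difficulty is the third step: the simultaneous calibration of the three weight functions $\kappa_n,\kappa_E,\kappa_H$ uniformly across the three frequency regimes. The non-symmetric structure of the relaxation, together with the indefinite Lorentz-force coupling $\varepsilon\widehat u\times\bar B$, generates cross terms without any sign which must all be reabsorbed by the coexisting dissipation contributions; it is precisely this reabsorption, and its degeneration as $|\xi|\to 0$ or $|\xi|\to\infty$, that dictates the denominator $(1+\varepsilon^2|\xi|^2)(1+|\xi|^2)$ appearing in $\lambda_\varepsilon(|\xi|)$.
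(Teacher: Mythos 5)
Your proposal follows essentially the same route as the paper: the same normalised symmetric energy (the paper's $|\widehat n|^2+P'(\bar\rho)\var^2|\widehat u|^2+\frac1K|\widehat E|^2+\frac1K|\widehat H|^2$ divided by $P'(\bar\rho)$), the same three Kawashima-type cross functionals ${\rm Re}\langle \widehat u,i\xi\widehat n\rangle$, ${\rm Re}\langle\widehat u,\widehat E\rangle$, ${\rm Re}\langle\widehat E,-i\xi\times\widehat H\rangle$ with hierarchically small prefactors, the same use of the constraints $i\xi\cdot\widehat E=-K\widehat n$, $\xi\cdot\widehat H=0$, and Gr\"onwall at the end; your $\kappa_E$ and $\kappa_H$ coincide with the paper's weights $\frac{\var^2}{1+\var^2|\xi|^2}$ and $\frac{\var}{(1+\var^2|\xi|^2)(1+|\xi|^2)}$. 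The one miscalibration is $\kappa_n$: the cross term $L_n$ already produces the dissipation $(|\xi|^2+K)|\widehat n|^2$ \emph{before} any division (the $K|\widehat n|^2$ coming from the Poisson constraint applied to ${\rm Re}\langle\widehat E,i\xi\widehat n\rangle$), and its error terms are only of size $(1+\var^2|\xi|^2)|\widehat u|^2$, so the correct weight is $\kappa_n\sim\frac{\var^2}{1+\var^2|\xi|^2}$ with no extra $(1+|\xi|^2)$ in the denominator. With your ansatz $\kappa_n\sim\frac{\var^2}{(1+|\xi|^2)(1+\var^2|\xi|^2)}$ you would only obtain $\frac{c_0}{1+\var^2|\xi|^2}|\widehat n|^2$ instead of the claimed $\frac{c_0(1+|\xi|^2)}{1+\var^2|\xi|^2}|\widehat n|^2$ in \eqref{pointwise02}; this weaker bound still yields \eqref{pointwise}, since $\lambda_\var$ is governed by the $\widehat H$-dissipation, but it misses the sharp high-frequency gain on the density that the stated inequality records (and that mirrors the absence of regularity loss for $a$ in the dissipation functional $\mathcal D$).
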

\begin{proof}
   Applying the Fourier transform to \eqref{EMvarl} gives
\begin{equation}\label{EMvarll}
\left\{
\begin{aligned}
&\partial_{t}\widehat{n }+P'(\bar{\rho})i\xi \widehat{u } =0,\\
&\var^2\partial_{t}\widehat{u }+i \xi \widehat{n }+\widehat{E }+\widehat{u }+ \var \widehat{u }\times \bar{B}=0,\\
&\var \partial_{t}\widehat{E }-i\xi\times  \widehat{H }-\var \bar{\rho} \widehat{u }=0,\\
&\var \partial_{t}\widehat{H }+i\xi \times \widehat{E }=0,\\
&i\xi \widehat{E }=-K\widehat{n },\quad\quad i\xi \widehat{H }=0,
\end{aligned}
\right.
\end{equation}
where we recall that $K=\dfrac{\bar{\rho}}{P'(\bar{\rho})}$.
Performing the  inner product of \eqref{EMvarll} with $(\widehat{n }, P'(\bar{\rho})\widehat{u },\frac{1}{K}\widehat{E},\frac{1}{K}\widehat{H})^T $ and taking the real part, we obtain
\begin{equation}\label{p1}
\begin{aligned}
&\frac{1}{2}\frac{d}{dt}\Big(|\widehat{n }|^2+P'(\bar{\rho})\var^2|\widehat{u}|^2+\frac{1}{K} |\widehat{E}|^2+\frac{1}{K} |\widehat{H}|^2\Big)+P'(\bar{\rho})|\widehat{u}|^2=0.
\end{aligned}
\end{equation}
To capture dissipation for $n$, we have
\begin{equation}\label{p21}
\begin{aligned}
-\var^2\frac{d}{dt}{\rm Re}< \widehat{u}, i\xi \widehat{n}>+|\xi|^2 |\widehat{n}|^2+K |\widehat{n}|^2
&={\rm Re}<\widehat{u }+ \var \widehat{u }\times \bar{B}, i\xi \widehat{n}>+P'(\bar{\rho})\var^2|\xi\cdot \widehat{u}|^2\\
&\leq \frac{1}{2}|\xi|^2 |\widehat{n}|^2+C(1+\var^2|\xi|^2)|\widehat{u }|^2.
\end{aligned}
\end{equation}
Then, multiplying \eqref{p21} by $\frac{1}{1+\var^2|\xi|^2}$, we obtain
\begin{equation}\label{p2}
\begin{aligned}
&-\frac{d}{dt}\frac{\var^2 {\rm Re}< \widehat{u}, i\xi \widehat{n}>}{1+\var^2|\xi|^2}+\frac{|\xi|^2}{2(1+\var^2|\xi|^2)} |\widehat{n}|^2+ \frac{K }{1+\var^2|\xi|^2}|\widehat{n}|^2\leq C|\widehat{u}|^2.
\end{aligned}
\end{equation}
Performing the inner scalar product of the second equation in \eqref{EMvarll} with $\widehat{E}$ (associated with the skew-symmetric part of the relaxation matrix), and then using the third equation in  \eqref{EMvarll} implies that 
\begin{equation}\label{p31}
\begin{aligned}
&\var^2\frac{d}{dt}{\rm Re}< \widehat{u},  \widehat{E}>+|\widehat{E}|^2+\frac{1}{K}|\xi\cdot \widehat{E}|^2\\
&=-{\rm Re}<\widehat{u }+ \var\widehat{u }\times \bar{B}, \widehat{E}>+\var{\rm Re}<i\xi\times\widehat{H},\widehat{u}>+ \var^2\bar{\rho}|\widehat{u}|^2\\
&\leq \frac{1}{2}|\widehat{E}|^2+\frac{C(1+\var^2 |\xi|^2)}{\sqrt{\eta} } |\widehat{u}|^2+\frac{C\sqrt{\eta}|\xi|^2}{1+|\xi|^2} |\widehat{H}|^2
\end{aligned}
\end{equation}
for  $\eta\in(0,1)$ to be chosen later. In order to be consistent with the dissipation of $u$ in \eqref{p1}, we multiply both sides of \eqref{p31} by $\frac{1}{1+\var^2|\xi|^2}$ and obtain
\begin{equation}\label{p3}
\begin{aligned}
&\frac{d}{dt}\frac{\var^2 {\rm Re}< \widehat{u},  \widehat{E}>}{1+\var^2|\xi|^2}+\frac{ 1}{2(1+\var^2|\xi|^2)}|\widehat{E}|^2\leq \frac{C}{\sqrt{\eta}}|\widehat{u}|^2+\frac{C\sqrt{\eta} |\xi|^2}{(1+\var^2|\xi|^2)(1+|\xi|^2)} |\widehat{H}|^2.
\end{aligned}
\end{equation}
To derive the dissipation of $\widehat{H}$, using $|\xi|^2 |\widehat{H}|^2=|\xi\times \widehat{H}|^2$ due to $\xi\cdot \widehat{H}=0$, it follows that
\begin{equation}
\begin{aligned}
\var\frac{d}{dt}{\rm Re}< \widehat{E}, -i\xi\times   \widehat{H}>+|\xi|^2 |\widehat{H}|^2
&~= |\xi\times \widehat{E}|^2-\bar{\rho}\var{\rm Re} <\widehat{u},i\xi\times   \widehat{H}>\\
&~\leq \frac{1}{2}|\xi|^2 |\widehat{H}|^2+C|\xi|^2 |\widehat{E}|^2+C|\widehat{u}|^2.
\end{aligned}
\end{equation}
In view of the dissipation of $\widehat{E}$ in \eqref{p3}, we have 
\begin{equation}\label{p4}
\begin{aligned}
&\frac{d}{dt}\frac{\var {\rm Re}< \widehat{E}, -i\xi\times   \widehat{H}>}{(1+\var^2|\xi|^2)(1+|\xi|^2)}+\frac{ |\xi|^2}{2(1+\var^2|\xi|^2)(1+|\xi|^2)}|\widehat{H}|^2\\
&\leq C|\widehat{u}|^2+\frac{ |\xi|^2}{(1+\var^2|\xi|^2)(1+|\xi|^2)} |\widehat{E}|^2.
\end{aligned}
\end{equation}
Then, we define the Lyapunov functional
\begin{equation}
\begin{aligned}
\mathcal{L}_{\xi}(t)\triangleq&\frac{1}{2}\bigg(|\widehat{n }|^2+P'(\bar{\rho})\var^2|\widehat{u}|^2+\frac{1}{K} |\widehat{E}|^2+\frac{1}{K} |\widehat{H}|^2\bigg)\\
&\quad-\eta \frac{\var^2 {\rm Re}< \widehat{u}, i\xi \widehat{n}>}{1+\var^2|\xi|^2}+\eta\frac{\var^2 {\rm Re}< \widehat{u},  \widehat{E}>}{1+\var^2|\xi|^2}+\eta^{\frac{5}{4}}\frac{\var {\rm Re}< \widehat{E}, -i\xi\times   \widehat{H}>}{(1+\var^2|\xi|^2)(1+|\xi|^2)}.
\end{aligned}
\end{equation}
It follows from \eqref{p1}, \eqref{p2}, \eqref{p3} and \eqref{p4} that
\begin{equation}
\begin{aligned}
&\frac{d}{dt}\mathcal{L}_{\xi}(t)+(P'(\bar{\rho})-C\eta-C\sqrt{\eta})|\widehat{u}|^2+ \frac{ \eta (1+|\xi|^2)}{1+\var^2|\xi|^2}|\widehat{n}|^2\\
&\quad+(\frac{1}{2}-\eta^{\frac{1}{4}})\eta\frac{ 1}{1+\var^2|\xi|^2}|\widehat{E}|^2+\eta^{\frac{5}{4}}(\frac{1}{2}-\eta^{\frac{1}{4}})(\frac{ |\xi|^2}{(1+\var^2|\xi|^2)(1+|\xi|^2)}|\widehat{H}|^2\leq 0.
\end{aligned}
\end{equation}
Choosing a suitable small constant $\eta$, we have $\mathcal{L}_{\xi}(t)\sim |(\widehat{n}, \var \widehat{u},\widehat{E},\widehat{H})|^2$ and the inequality \eqref{pointwise02} is proved. In particular, it holds that
\begin{equation}\label{pmmm}
\begin{aligned}
&\frac{d}{dt}\mathcal{L}_{\xi}(t)+\lambda_{\var}(\xi)\mathcal{L}_{\xi}(t)\leq 0,
\end{aligned}
\end{equation}
which leads to \eqref{pointwise} by Gr\"onwall's inequality.
\end{proof}

\subsection{Uniform a priori estimates and global well-posedness}\label{sectionpriori}
In this section, our central task is to derive uniform a priori estimates in the spirit of  Proposition \ref{proppointwise} and the work of Beauchard and Zuazua \cite{BZ}. This enables us to achieve the global existence of classical solutions to the Cauchy problem \eqref{EM1}. Denote 
\begin{align}
\mathcal{X}(t):=\mathcal{E}(n,u,E,H)+\mathcal{D}(n,u,E,H)\label{Xt}
\end{align} for $t>0$ and $0<\var\leq 1$, 
where $\mathcal{E}$ and $\mathcal{D}$ are defined by \eqref{Et} and \eqref{Dt}.

\begin{proposition}\label{propapriori} Assume that $(n,u,E,H)$ is a classical solution to \eqref{EM1} on the time interval $[0,T]$. There exist positive constants $\delta_0$ and $C_0$ independent of $\var$
such that for $t\in [0,T]$, if
\begin{equation}\label{prioribound}
\begin{aligned}
&\|n\|_{L^{\infty}_{t}(L^{\infty})}\leq  \delta_0,
\end{aligned}
\end{equation} then
it holds that
\begin{equation}
\begin{aligned}
&\mathcal{X}(t)\leq C_{0}\left(\mathcal{E}^{\var}_0+\mathcal{X}(t)^2+\mathcal{X}(t)^3\right),\label{uniformapriori}
\end{aligned}
\end{equation}
where the initial energy norm $\mathcal{E}_0^{\var}$ is given by \eqref{E0}.
\end{proposition}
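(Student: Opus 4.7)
The strategy is to promote the pointwise (Fourier-mode) hypocoercivity estimate of Proposition \ref{proppointwise} to the nonlinear system \eqref{EM1} by working dyadic block by dyadic block, with three different Lyapunov functionals tailored to the three frequency regimes of Table \ref{table:1}, and then to close the nonlinear estimates through paraproduct and commutator bounds valid under the smallness condition \eqref{prioribound}.

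First, I would localize by applying $\dot\Delta_j$ to system \eqref{EM1}, obtaining equations for $(n_j,u_j,E_j,H_j)$ with right-hand sides containing the commutators $[u\cdot\nabla,\dot\Delta_j]n$, $[u\cdot\nabla,\dot\Delta_j]u$, the paraproduct-type remainders $\dot\Delta_j(G(n)\div u)$, $\dot\Delta_j(u\times H)$, $\dot\Delta_j(F(n)u)$, and $\dot\Delta_j\Phi(n)$. For low frequencies $j\leq 0$, I would mimic exactly the construction of $\mathcal L_\xi$ in Proposition \ref{proppointwise}, i.e.\ take
\[
\mathcal L_j^\ell \sim \tfrac12\bigl(\|n_j\|_{L^2}^2+P'(\bar\rho)\varepsilon^2\|u_j\|_{L^2}^2+K^{-1}\|E_j\|_{L^2}^2+K^{-1}\|H_j\|_{L^2}^2\bigr)+\eta\,(\text{cross terms}),
\]
and differentiate in time. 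Since $\varepsilon^2 2^{2j}\lesssim 1$ and $2^{2j}\lesssim 1$ in this regime, the denominators $(1+\varepsilon^2|\xi|^2)(1+|\xi|^2)$ from \eqref{pointwise02} are harmless, and we recover dissipation of order $2^{2j}$ for $(n,u,E,H)_j$, giving after summation the contributions of $\mathcal D$ at regularity $\dot B^{1/2}$ for $(n,u,E)$ and $\dot B^{3/2}$ for $H$. For medium frequencies $-1\leq j\leq J_\varepsilon$ the same cross terms yield uniform damping of order $1$ for $(n,u,E,H)$ up to replacing $|\xi|^2/(1+|\xi|^2)$ by $1$, which accounts for the $\dot B^{5/2}$ control of $n$ and the $\dot B^{3/2}$ control of $u,E,H$ in $\mathcal D$.

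The delicate regime is the high-frequency one, $j\geq J_\varepsilon-1$, where the pointwise rate $\lambda_\varepsilon\sim -1/(\varepsilon^2|\xi|^2)$ of Proposition \ref{proppointwise} exhibits the regularity loss. Here the only component with a \emph{genuine} damping of order $1/\varepsilon^2$ is the effective velocity $z=u+\nabla h(\rho)+E+\varepsilon u\times\bar B$ mentioned in Subsection \ref{subsectionmotivation}; from the momentum equation one sees that $\varepsilon^2 \partial_t z + z = \text{nonlinear and lower-order terms}$, which provides the $\widetilde L^2_t(\dot B^{5/2})^h$-bound with a factor $\varepsilon$ on $u$. Using $z$ and the constraint $\div E=-Kn-\Phi(n)$, I would rewrite $u=z-\nabla h(\rho)-E-\varepsilon u\times\bar B$ and inject it in the equations for $n$ and for $(E,H)$: the former becomes parabolic of the form $\partial_t n - P'(\bar\rho)\Delta n +\bar\rho n = \text{l.o.t.}$ at the block level, giving the $\widetilde L^2_t(\dot B^{5/2})^h$ bound on $n$; the latter gives, after a hypocoercivity argument analogous to the $(\widehat E,\widehat H)$ step in Proposition \ref{proppointwise}, an $\dot B^{3/2}$ bound on $(E,H)$ in $L^2_t$ at high frequency. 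Multiplying by $\varepsilon$ matches the weights in $\mathcal E$ and $\mathcal D$ and suppresses exactly the factor $1/\varepsilon^2$ coming from $\lambda_\varepsilon$; this is the main point where Bernstein's inequalities \eqref{bernstein} on the high-frequency side are used to trade powers of $\varepsilon$ against powers of $2^j$.

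Once the linear estimates are in place in each regime, it remains to bound the nonlinear right-hand sides. I would use the tame product and composition estimates (to be recalled in Appendix \ref{app1}): for $j\leq 0$, paraproduct estimates in $\dot B^{1/2}$ are acceptable because $\dot B^{3/2}\hookrightarrow L^\infty$ and every nonlinearity is at least quadratic in the small quantities $(n,\varepsilon u, E, H)$; for $0\leq j\leq J_\varepsilon$, the choice of index $3/2$ coincides with the standard critical regularity for the isentropic Euler equation, so Moser-type product estimates close at the level $\mathcal X(t)^2$; for $j\geq J_\varepsilon$, the commutator $[u\cdot\nabla,\dot\Delta_j]n$ is handled by the Coifman-Meyer estimate, producing $\|\nabla u\|_{L^\infty}\lesssim \mathcal X(t)$ times $\|n^h\|_{\dot B^{5/2}}$, and the composition terms $G(n), F(n),\Phi(n)$ are controlled by the smallness \eqref{prioribound} through a standard chain-rule lemma in $\dot B^{1/2,5/2}$. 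Collecting the three Lyapunov estimates with the proper weights $2^{js}$ and taking $L^\infty_t\cap L^2_t$ in time produces
\[
\mathcal X(t)\lesssim \mathcal E_0^\varepsilon + \mathcal X(t)^2+\mathcal X(t)^3,
\]
which is \eqref{uniformapriori}. I expect the main technical obstacle to be the bookkeeping in the high-frequency regime: one must verify that every term produced by substituting $u=z-\nabla h(\rho)-E-\varepsilon u\times\bar B$ either carries an explicit $\varepsilon$ prefactor, a gain of one derivative absorbed by $\varepsilon 2^{-j}\lesssim 1$ for $j\geq J_\varepsilon$, or a quadratic structure; failing any of these, the estimate does not close uniformly in $\varepsilon$. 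The rest is essentially the same machinery developed in \cite{CBD2,CBD1,CBD3} adapted to the three-regime splitting imposed by the non-symmetric relaxation.
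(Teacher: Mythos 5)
Your overall architecture — dyadic localization, three frequency regimes, and block-level Lyapunov functionals modeled on the hypocoercive functional $\mathcal L_\xi$ of Proposition \ref{proppointwise}, closed by product, composition and commutator estimates at the level $\mathcal X(t)^2+\mathcal X(t)^3$ — is exactly the paper's strategy for the low- and medium-frequency regimes (Lemmas \ref{lemmalow} and \ref{lemmamedium}). However, your high-frequency step is not what the paper does, and as written it contains a genuine gap. You propose to extract the $\dot B^{\frac52}$ high-frequency dissipation from the damped equation for the effective velocity $z=u+\nabla h(\rho)+E+\varepsilon u\times\bar B$ and then to substitute $u=z-\nabla h(\rho)-E-\varepsilon u\times\bar B$ back into the $n$ and $(E,H)$ equations. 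But the $z$-equation \eqref{zeq} carries the source terms $\nabla\partial_t n$ and $\partial_t E$, and $\nabla\partial_t n=-P'(\bar\rho)\nabla\div u+\cdots$ costs two derivatives on $u$: at frequencies $2^j\gtrsim\varepsilon^{-1}$ the damping only compensates a factor $\varepsilon^2 2^{2j}\gtrsim 1$, so the putative $\dot B^{\frac52}$ bound on $z$ requires $u\in\dot B^{\frac92}$ and does not close. Even granting a bound on $z$, recovering $\varepsilon\|u\|_{\dot B^{\frac52}}^{h}$ from $u=z-\nabla n-E-\varepsilon u\times\bar B$ costs $\varepsilon\|n\|_{\dot B^{\frac72}}^{h}\geq\|n\|_{\dot B^{\frac52}}^{h}$ in that regime, again one derivative more than the dissipation functional provides. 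This is precisely why the paper confines the effective velocity to the relaxation limit at the low regularity $\dot B^{\frac12}$ (Proposition \ref{propenhancez}), where these sources are harmless, and instead closes the high-frequency regime in Lemma \ref{lemmahigh} by a direct weighted Lyapunov functional with cross terms $2^{-2j}\int u_j\cdot\nabla n_j$, $2^{-2j}\int u_j\cdot E_j$ and $\varepsilon^{-1}2^{-4j}\int E_j\cdot\nabla\times H_j$, yielding the dissipation $\varepsilon^{-2}\|n_j\|_{L^2}^2+\|u_j\|_{L^2}^2+\varepsilon^{-2}2^{-2j}\|(E_j,H_j)\|_{L^2}^2$.

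A second, related point: you list $\dot\Delta_j(G(n)\div u)$ among the remainders to be estimated as sources. This is fine in low frequencies, but in the medium and high regimes it loses one derivative on $u$ (equivalently, a factor $\varepsilon^{-1}$ via \eqref{bernstein}), since only $\|u\|_{\dot B^{\frac32}}^{m}$ and $\varepsilon\|u\|_{\dot B^{\frac52}}^{h}$ are controlled. The paper avoids this by keeping the quasilinear coefficient on the left, i.e.\ working with $(P'(\bar\rho)+G(n))\div u_j$ and the commutator $\mathcal R_{1,j}=[G(n),\dot\Delta_j]\div u$, together with $[u,\dot\Delta_j]\nabla n$ and $[u,\dot\Delta_j]\nabla u$ for the convection terms in the high-frequency regime. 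With these two corrections — replacing the effective-velocity substitution by the weighted cross-term functional, and symmetrizing the quasilinear terms via commutators — your argument becomes the paper's proof.
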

The proof of the proposition \ref{propapriori} is a direct consequence of Lemmas \ref{lemmalow}-\ref{lemmahigh},
which are closely linked with the dissipation analysis (on  three distinct regimes) addressed in Section \ref{subsectionmotivation}. 

\begin{lemma}[Low-frequency estimates] \label{lemmalow}
If $(n,u,E,H)$ is a classical solution to \eqref{EM1} on the time interval $[0,T]$, then the following estimate holds: 
\begin{equation}\label{low}
\begin{aligned}
&\|(n,\var u,E,H)\|_{\widetilde{L}^{\infty}_{t}(\dot{B}^{\frac{1}{2}} )}^{\ell}+\|(n,u,E)\|_{\widetilde{L}^{2}_{t}( \dot{B}^{\frac{1}{2}})}^{\ell}+\|H\|_{\widetilde{L}^{2}_{t}(\dot{B}^{\frac{3}{2}})}^{\ell}\lesssim \|(n_{0}, u_{0},E_{0},H_{0})\|_{\dot{B}^{\frac{1}{2}} }^{\ell}+\mathcal{X}(t)^2
\end{aligned}
\end{equation}
for $t\in [0,T]$ and $0<\var\leq 1$.
\end{lemma}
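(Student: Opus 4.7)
The plan is to lift the pointwise Lyapunov estimate of Proposition \ref{proppointwise} from the linearization \eqref{EMvarl} to the full nonlinear system \eqref{EM1}, localized on the low-frequency blocks $j\leq 0$. First I would apply $\dot\Delta_j$ to each equation of \eqref{EM1} to obtain a frequency-localized system for $(n_j,u_j,E_j,H_j):=\dot\Delta_j(n,u,E,H)$, with nonlinear remainders $R_j^1,\ldots,R_j^4$ collecting the contributions of $u\cdot\nabla n$, $G(n)\div u$, $u\cdot\nabla u$, $\var u\times H$, $F(n)u$ and $\Phi(n)$. On the support of $\dot\Delta_j$ for $j\leq 0$ and $\var\in(0,1]$ one has $1+|\xi|^2\sim 1$ and $1+\var^2|\xi|^2\sim 1$, so the weights appearing in \eqref{pointwise02} collapse to constants and the whole argument can be carried out directly in physical space.

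Next, mirroring the Fourier-side construction from the proof of Proposition \ref{proppointwise}, I would introduce the block Lyapunov functional
\[
\mathcal L_j^\ell(t):=\tfrac{1}{2}\bigl(\|n_j\|_{L^2}^2+P'(\bar\rho)\var^2\|u_j\|_{L^2}^2+\tfrac{1}{K}\|E_j\|_{L^2}^2+\tfrac{1}{K}\|H_j\|_{L^2}^2\bigr)-\eta\var^2\langle u_j,\nabla n_j\rangle+\eta\var^2\langle u_j,E_j\rangle+\eta^{5/4}\var\langle E_j,\nabla\times H_j\rangle,
\]
with $\eta\in(0,1)$ small enough that $\mathcal L_j^\ell\sim\|(n_j,\var u_j,E_j,H_j)\|_{L^2}^2$ uniformly in $\var$. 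Computing $\tfrac{d}{dt}\mathcal L_j^\ell$ on the localized equations (using $\div E_j=-Kn_j+R_j^4$ to absorb the divergence contributions) should yield
\[
\tfrac{1}{2}\tfrac{d}{dt}\mathcal L_j^\ell+c_0\bigl(\|(n_j,u_j,E_j)\|_{L^2}^2+2^{2j}\|H_j\|_{L^2}^2\bigr)\lesssim\sqrt{\mathcal L_j^\ell}\,\|(R_j^1,\ldots,R_j^4)\|_{L^2}.
\]
The standard division by $\sqrt{\mathcal L_j^\ell}$, time integration and $L^\infty_t$-absorption then give
\[
\|(n_j,\var u_j,E_j,H_j)\|_{L^\infty_tL^2}+c\bigl(\|(n_j,u_j,E_j)\|_{L^2_tL^2}+2^j\|H_j\|_{L^2_tL^2}\bigr)\lesssim\|(n_j,u_j,E_j,H_j)(0)\|_{L^2}+\|(R_j^1,\ldots,R_j^4)\|_{L^1_tL^2}.
\]
Multiplying by $2^{j/2}$ and summing over $j\leq 0$ reproduces the left-hand side of \eqref{low}, controlled by $\|(n_0,u_0,E_0,H_0)\|_{\dot B^{1/2}}^\ell$ plus $\|(R^1,\ldots,R^4)\|_{\widetilde L^1_t\dot B^{1/2}}^\ell$.

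Finally, the nonlinear source norm will be handled by Bony's paraproduct together with the product estimate $\|fg\|_{\dot B^{1/2}}\lesssim\|f\|_{\dot B^{3/2}}\|g\|_{\dot B^{1/2}}+\|f\|_{\dot B^{1/2}}\|g\|_{\dot B^{3/2}}$ and the composition estimates for the smooth nonlinearities $G,F,\Phi$ (valid under the $L^\infty$-smallness \eqref{prioribound}, with $\Phi=\mathcal O(n^2)$ providing an extra factor $\|n\|_{\dot B^{3/2}}$). Each nonlinear block is thereby controlled pointwise in $t$ by products of $\mathcal E(t)$ with quantities entering $\mathcal D(t)$, and time integration together with $\mathcal X=\mathcal E+\mathcal D$ closes the estimate at the quadratic level $\mathcal X(t)^2$.

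The hard part will be the construction of the block Lyapunov functional: the magnetic field $H$ dissipates only through the secondary Maxwell coupling $E\leftrightarrow H$ via the $\nabla\times$ operators, which forces the $\langle E_j,\nabla\times H_j\rangle$ correction to carry a strictly smaller weight ($\eta^{5/4}$) than the $n$- and $E$-corrections (both of weight $\eta$) to avoid being swamped by the extra $E$- and $u$-contributions the latter generate. Checking that this delicate hierarchy survives once commuted with $\dot\Delta_j$ and perturbed by the state-dependent coefficients $G(n), F(n), \Phi(n)$ is the only point that requires genuine care; the low-frequency restriction $j\leq 0$ however keeps all associated commutator terms tame thanks to Bernstein's inequality.
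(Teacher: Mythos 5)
Your proposal is correct and follows essentially the same route as the paper: frequency-localize \eqref{EM1}, build the block Lyapunov functional with cross terms $\var^2\langle u_j,\nabla n_j\rangle$, $\var^2\langle u_j,E_j\rangle$ and $\var\langle E_j,\nabla\times H_j\rangle$ weighted $\eta,\eta,\eta^{5/4}$ (the paper's $\eta_1$ hierarchy in \eqref{Esimlow}), integrate in time via Lemma \ref{lemmaL1L2}, sum over $j\le 0$ with weight $2^{j/2}$, and close the nonlinear terms with \eqref{uv2}, \eqref{F1} and Lemma \ref{compositionlp} for $\Phi(n)$. The only cosmetic differences are the signs on the cross terms (a matter of convention, fixed once the cross estimates \eqref{E2}--\eqref{E4} are written down) and the observation that in the low-frequency regime the paper keeps $G(n)\div u$ as a pure source term rather than introducing commutators, exactly as you do.
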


\begin{proof}
Applying the frequency-localization operator $\dot{\Delta}_j$ to \eqref{EM1}, we obtain
\begin{equation}\label{EM1j}
\left\{
\begin{aligned}
&\partial_{t}n_{j}+P'(\bar{\rho}) \div u_{j}=-\dot{\Delta}_{j}(u\cdot\nabla n)-\dot{\Delta}_{j}(G(n) \div u),\\
&\var^2\partial_{t}u_{j}+\nabla n_{j}+E_{j}+ u_{j}+\var u_{j}\times \bar{B}=-\dot{\Delta}_{j}(\var^2 u\cdot\nabla u)-\dot{\Delta}_{j}(\var u\times H),\\
&\var \partial_{t}E_{j}-\nabla\times H_{j}-\var\bar{\rho} u_{j}=\dot{\Delta}_{j}(\var F(n)u),\\
&\var \partial_{t}H_{j}+\nabla\times E_{j}=0,\\
&\div E_{j}=-Kn_{j}-\dot{\Delta}_{j}\Phi(n),\quad\quad \div H_{j}=0.
\end{aligned}
\right.
\end{equation}
Taking the $L^2$-inner product of $\eqref{EM1j}_{1}$ with $n_{j}$, we have
\begin{equation}\label{E11}
\begin{aligned}
\frac{1}{2}\frac{d}{dt}\int |n_{j}|^2\,dx+P'(\bar{\rho}) \int \div u_{j} n_{j}  \,dx\leq (\|\dot{\Delta}_{j}(u\cdot\nabla n)\|_{L^2}+\|\dot{\Delta}_{j}(G(n)\div u)\|_{L^2})\|n_{j}\|_{L^2}.
\end{aligned}
\end{equation}
To cancel the second term on the left-hand side of \eqref{E11}, we take the $L^2$-inner product of $\eqref{EM1j}_{2}$ with $P'(\bar{\rho}) u_{j}$ to get
\begin{equation}\label{E12}
\begin{aligned}
&\frac{P'(\bar{\rho})\var^2}{2}\frac{d}{dt}\int |u_{j}|^2 \,dx+P'(\bar{\rho})  \int  \nabla n_{j} \cdot u_{j} \,dx+P'(\bar{\rho})\int |u_{j}|^2 \,dx+P'(\bar{\rho}) \int E_{j}\cdot u_{j}\,dx\\
&~\leq P'(\bar{\rho})\|\dot{\Delta}_{j}(\var u\cdot\nabla u, u\times H)\|_{L^2}\var\|u_{j}\|_{L^2},
\end{aligned}
\end{equation}
where the fact that $(u_{j}\times \bar{B})\cdot u_{j}=(u_{j}\times u_{j})\cdot\bar{B}=0$ was used. In addition, it follows from $\eqref{EM1j}_{3}$-$\eqref{EM1j}_{4}$ that
\begin{equation}\label{E13}
\begin{aligned}
&\frac{P'(\bar{\rho})}{2\bar{\rho}}\frac{d}{dt}\|(E_{j},H_{j})\|_{L^2}^2-P'(\bar{\rho})\int u_{j}\cdot E_{j}\,dx \leq \frac{1}{K}\|\dot{\Delta}_{j}(F(n)u)\|_{L^2}\|E_{j}\|_{L^2},
\end{aligned}
\end{equation}
where we have used 
$$
\int (\nabla\times f)\cdot g- (\nabla \times g)\cdot f \,dx=\int \div (f\times g)\,dx=0,\quad\quad \forall f,g\in \mathcal{S}'(\R^3).
$$
Combining \eqref{E11}-\eqref{E13} together, we have
\begin{equation}\label{E1}
\begin{aligned}
&\frac{1}{2}\frac{d}{dt}\int\Big(|n_{j}|^2+ P'(\bar{\rho})\var^2|u_{j}|^2+\frac{1}{K}|E_{j}|^2+\frac{1}{K} |H_{j}|^2\Big)\,dx+P'(\bar{\rho})\int |u_{j}|^2\,dx\\
&~\leq  (\|\dot{\Delta}_{j}(u\cdot\nabla n)\|_{L^2}+\|\dot{\Delta}_{j}(G(n)\div u)\|_{L^2})\|n_{j}\|_{L^2} \\
&~\quad+P'(\bar{\rho})\|\dot{\Delta}_{j}(\var u\cdot\nabla u,u\times H)\|_{L^2}\var \|u_{j}\|_{L^2}+\frac{1}{K}\|\dot{\Delta}_{j}(F(n)u)\|_{L^2}\|E_{j}\|_{L^2}.
\end{aligned}
\end{equation}
In order to obtain some dissipation rate for $n_{j}$, we multiply $\eqref{EM1j}_{2}$ by $\nabla n_{j}$ and integrate the resulting equality over $\mathbb{R}^{d}$. 
Since $\div E_{j}=-Kn_{j}-\dot{\Delta}_{j}\Phi(n)$, we see that $n_j$ satisfies
$$
\int E_{j}\cdot\nabla n_{j} \,dx =-\int \div E_{j} n_{j} \,dx=K\|n_{j}\|_{L^2}^2+\int \dot{\Delta}_{j}\Phi(n) n_{j} \,dx.
$$
Furthermore, with the help of the Cauchy-Schwarz inequality, we get
\begin{equation}\label{E2}
\begin{aligned}
&\var^2\frac{d}{dt}\int u_{j}\cdot \nabla n_{j} \,dx+\int \Big( |\nabla n_{j}|^2+K|n_{j}|^2-P'(\bar{\rho})\var^2|\div u_{j}|^2+u_{j}\cdot \nabla n_{j}\Big)\,dx\\
&\leq \var \|\dot{\Delta}_{j}(\var u\cdot\nabla u,u\times H)\|_{L^2}\|\nabla n_{j}\|_{L^2}+\var\|\nabla\dot{\Delta}_{j}(u\cdot\nabla n,G(n)\div u)\|_{L^2}\var\|u_{j}\|_{L^2}\\
&\quad+\|\dot{\Delta}_{j}\Phi(n)\|_{L^2} \|n_{j}\|_{L^2}.
\end{aligned}
\end{equation}
The nice ``div-curl" construction of Maxwell's equation in $\eqref{EM1}$ enables us to get dissipation for $(E,H)$. Concerning $E$, it comes from the interaction between the symmetric and skew-symmetric part of the zero-order dissipation matrix. Indeed, taking the inner product of $\eqref{EM1j}_{2}$ with $E_{j}$, using $\eqref{EM1j}_{3}$, $\eqref{EM1j}_{5}$ and that $n_{j}=-\frac{1}{K}\div E_{j}-\frac{1}{K}\dot{\Delta}_{j}\Phi(n)$, we arrive at
\begin{equation}\label{E3}
\begin{aligned}
&\var^2\frac{d}{dt}\int  u_{j}\cdot E_{j} \,dx+\int (|E_{j}|^2+\frac{1}{K}|\div E_{j}|^2)\,dx\\
&\quad+\int \Big(  u_{j}\cdot E_{j}+\var(u_{j}\times \bar{B}) \cdot E_{j} - \var u_{j}\cdot (\nabla\times H_{j})- \var^2\bar{\rho}|u_{j}|^2 \Big)\,dx\\
&\leq \var\|\dot{\Delta}_{j}(\var u\cdot\nabla u,u\times H)\|_{L^2}\|E_{j}\|_{L^2}+\var\|\dot{\Delta}_{j}(F(n)u)\|_{L^2}\var\|u_{j}\|_{L^2}\\
&\quad+\frac{1}{K}\|\dot{\Delta}_{j}\Phi(n)\|_{L^2}\|\div E_{j}\|_{L^2}.
\end{aligned}
\end{equation}
On the other hand,  taking the inner product of $\eqref{EM1j}_{3}$ with $-\nabla\times H_{j}$ and using $\eqref{EM1j}_{4}$, we get the dissipation for $H$: 
\begin{equation}\label{E4}
\begin{aligned}
&-\var\frac{d}{dt}\int E_{j}\cdot\nabla \times H_{j} \,dx+\int (|\nabla\times H_{j}|^2 + \var\bar{\rho}u_{j}\cdot \nabla \times H_{j})\,dx  \\&\leq \int|\nabla\times E_{j}|^2dx+ \var \|\dot{\Delta}_{j}(F(n)u)\|_{L^2}\|\nabla \times H_{j} \|_{L^2}.
\end{aligned}
\end{equation}
Let $\eta_{1}\in(0,1)$. We denote by $\mathcal{L}_{\ell,j}$
and $D_{\ell,j}$ the low-frequency energy functional and dissipation functional:
\begin{equation}\nonumber
\begin{aligned}
\mathcal{L}_{\ell,j}(t):&=\frac{1}{2}\int\Big(|n_{j}|^2+ P'(\bar{\rho})\var^2|u_{j}|^2+\frac{1}{K}|E_{j}|^2+\frac{1}{K} |H_{j}|^2\Big)\,dx\\
&\quad+\var^2\eta_{1}\int u_{j}\cdot \nabla n_{j} \,dx+ \var^2\eta_{1} \int  u_{j}\cdot E_{j} \,dx-\eta_{1}^{\frac{5}{4}} \var \int E_{j}\cdot\nabla \times H_{j} \,dx
\end{aligned}
\end{equation}
and
\begin{equation}\nonumber
\begin{aligned}
D_{\ell,j}(t):&=P'(\bar{\rho})\int |u_{j}|^2+\eta_{1}\int \Big( |\nabla n_{j}|^2 +K|n_{j}|^2-P'(\bar{\rho})|\div u_{j}|^2+u_{j}\cdot \nabla n_{j}\Big)\,dx \\
&\quad+\eta_{1}\int \Big(|E_{j}|^2+\frac{1}{K}|\div E_{j}|^2+ u_{j}\cdot E_{j}+\var(u_{j}\times \bar{B}) \cdot E_{j} - \var u_{j}\cdot (\nabla\times H_{j})- \var^2\bar{\rho}|u_{j}|^2 \Big)\,dx\\
&\quad+\eta_{1}^{\frac{5}{4}}  \int (|\nabla\times H_{j}|^2- \var\bar{\rho}u_{j}\cdot \nabla \times H_{j} -|\nabla\times E_{j}|^2 )\,dx.
\end{aligned}
\end{equation}
Combining \eqref{prioribound} with \eqref{E1}-\eqref{E4}, Bernstein's inequality and $j\leq J_{0}$ leads to
\begin{equation}\label{Elow}
\begin{aligned}
&\frac{d}{dt}\mathcal{L}_{\ell,j}(t)+D_{\ell,j}(t)\lesssim  \|G_{j}^{\ell}(t)\|_{L^2}\|(n_{j},\var u_{j},E_{j},H_{j})\|_{L^2},
\end{aligned}
\end{equation}
with
$$
G_{j}^{\ell}(t):=\|\dot{\Delta}_{j}\big(u\cdot\nabla n, G(n)\div u,\var u\cdot\nabla u, u\times H, \var F(n) u,\Phi(n)\big)\|_{L^2}.
$$
Hence, we claim that for $\var\in(0,1]$, there exists a suitable small constant $\eta_{1}>0$ independent of $\var$ such that
\begin{equation}\label{Esimlow}
\left\{
\begin{aligned}
\mathcal{L}_{\ell,j}(t)&\sim \|(n_{j},\var u_{j},E_{j},H_{j})\|_{L^2}^2,\\
D_{\ell,j}(t)&\gtrsim \|(n_{j},u_{j},E_{j})\|_{L^2}^2+ 2^{2j}\|H_{j}\|_{L^2}^2.
\end{aligned}
\right.
\end{equation}
Indeed, it follows from $\supp\:(\widehat{\Delta_{j}\cdot})\subset \{\frac{3}{4} 2^{j}\leq |\xi|\leq \frac{8}{3} 2^{j}\}$ and  $2^{j}\leq 1 $ that
\begin{equation}\nonumber
\begin{aligned}
\mathcal{L}_{\ell,j}(t)&\leq \frac{1}{2}\int\Big((1+\frac{8}{3}\eta_{1})|n_{j}|^2+ (P'(\bar{\rho})+\frac{11}{3}\eta_{1})\var^2|u_{j}|^2+(\frac{1}{K}+\frac{11}{3}\eta_{1})|E_{j}|^2+(\frac{1}{K}+\frac{8}{3}\eta_{1}^{\frac{3}{2}})|H_{j}|^2\Big)\,dx,\\
\mathcal{L}_{\ell,j}(t)&\geq \frac{1}{2}\int\Big((1-\frac{8}{3}\eta_{1})|n_{j}|^2+ (P'(\bar{\rho})-\frac{11}{3}\eta_{1})\var^2|u_{j}|^2+(\frac{1}{K}-\frac{11}{3}\eta_{1})|E_{j}|^2+(\frac{1}{K}-\frac{8}{3}\eta_{1}^{\frac{3}{2}})|H_{j}|^2\Big)\,dx.
\end{aligned}
\end{equation}
Since $\div H_j=0$, the div-curl lemma implies that
\begin{equation}
\begin{aligned}
 \|\nabla \times H_{j}\|_{L^2}^2=\|\nabla H_{j}\|_{L^2}^2\geq \frac{9}{16}2^{2j}\|H_{j}\|_{L^2}^2.
\end{aligned}
\end{equation}
Furthermore, we have
\begin{equation}\nonumber
\begin{aligned}
D_{\ell,j}(t)&\geq P'(\bar{\rho})\int |u_{j}|^2 \,dx+\eta_{1} \int \Big( \frac{1}{2} |\nabla n_{j}|^2 \,dx+K|n_{j}|^2-P'(\bar{\rho})| \div u_{j}|^2-\frac{1}{2}|u_{j}|^2\Big)\,dx \\
&\quad+\eta_{1} \int \Big(\frac{1}{2} |E_{j}|^2-(1+\bar{B}^2+\frac{1}{2\eta_{1}^{\frac{1}{4}}}+\bar{\rho})|u_{j}|^2-\frac{1}{2}\eta_{1}^{\frac{1}{4}} |\nabla\times H_{j}|^2\Big)\,dx\\
&\quad+\eta_{1}^{\frac{5}{4}}  \int (\frac{1}{2}|\nabla\times H_{j}|^2- \frac{\bar{\rho}^2}{2}|u_{j}|^2 -|\nabla\times E_{j}|^2 )\,dx\\
&\geq \int \Big( (P'(\bar{\rho})-\frac{64P'(\bar{\rho})}{9}\eta_{1}-\bar{\rho}\eta_{1}-\frac{\bar{\rho}^2}{2}\eta_{1}^{\frac{3}{4}})|u_{j}|^2+\eta_{1}K |n_{j}|^2)\Big)dx\\
&\quad\quad+\int \Big( \frac{1}{2}\eta_{1}\var(1-\frac{32}{9}\eta_{1}^{\frac{1}{4}})|E_{j}|^2+\frac{9}{32}\eta_{1}^{\frac{5}{4}}  2^{2j}| H_{j}|^2)\,dx.
\end{aligned}
\end{equation}
Taking $\eta_{1}$ sufficiently small yields \eqref{Esimlow} immediately. Together, \eqref{Elow} and  \eqref{Esimlow} yield
\begin{equation}\label{L1}
\begin{aligned}
&\frac{d}{dt}\mathcal{L}^{\ell}_{j}(t)+\|(n_{j},u_{j},E_{j})\|_{L^2}^2+ 2^{2j}  \| H_{j}\|_{L^2}^2\lesssim G_{j}^{\ell}(t) \sqrt{\mathcal{L}^{\ell}_{j}(t)}.
\end{aligned}
\end{equation}
Applying Lemma \ref{lemmaL1L2} to \eqref{L1} and  \eqref{Esimlow} leads to
\begin{equation}\label{nnn}
\begin{aligned}
&\|(n_{j},\var u_{j},E_{j},H_{j})\|_{L^{\infty}_{t}(L^2)}+\|(n_{j},u_{j},E_{j})\|_{L^2_{t}(L^2)}+ 2^{j}\|H_{j}\|_{L^2_{t}(L^2)}\\
&\quad\lesssim \|(n_{j},\var u_{j},E_{j},H_{j})(0)\|_{L^2}+\|G_j^{\ell}\|_{L^1_t(L^2)}.
\end{aligned}
\end{equation}
 Multiplying $\eqref{nnn}$ by the factor $2^{(\frac{d}{2}-1)j}$ and summing over $j\leq 0$, we get
\begin{equation}\label{nuehell}
\begin{aligned}
&\|(n,\var u,E,H)\|_{\widetilde{L}^{\infty}_{t}(\dot{B}^{\frac{1}{2}} )}^{\ell}+\|(n,u,E)\|_{\widetilde{L}^{2}(\dot{B}^{\frac{1}{2}})}^{\ell}+\|H\|_{\widetilde{L}^{2}_{t}(\dot{B}^{\frac{3}{2}})}^{\ell}\\
&\quad\quad\quad\lesssim \|(n_{0}, u_{0},E_{0},H_{0})\|_{\dot{B}^{\frac{1}{2}} }^{\ell}+\|(u\cdot\nabla n, G(n)\div u,\var u\cdot\nabla u, \var u\times H, \var F(n) u,\Phi(n))\|_{L^1_{t}(\dot{B}^{\frac{1}{2}} )}^{\ell}.
\end{aligned}
\end{equation}
Before bounding the nonlinear terms on the right-hand side of \eqref{nuehell}, we claim that the standard Besov norms of $(n,u,E,B)$ can be bounded by $\mathcal{X}(t)$. Indeed,  owing to \eqref{J} and \eqref{bernstein}, one has
\begin{equation}\label{d2}
\left\{
\begin{aligned}
\|u\|_{\widetilde{L}^2_{t}(\dot{B}^{\frac{1}{2}}\cap \dot{B}^{\frac{3}{2}} )}&\lesssim \|u\|_{\widetilde{L}^2_{t}(\dot{B}^{\frac{1}{2}} )}^{\ell}+\|u\|_{\widetilde{L}^2_{t}(\dot{B}^{\frac{3}{2}} )}^{m}+\var \|u\|_{\widetilde{L}^2_{t}(\dot{B}^{\frac{5}{2}} )}^{h},\\
\|n\|_{\widetilde{L}^2_{t}(\dot{B}^{\frac{1}{2}}\cap\dot{B}^{\frac{5}{2}} )}&\lesssim \|n\|_{\widetilde{L}^2_{t}(\dot{B}^{\frac{1}{2}} )}^{\ell}+\|n\|_{\widetilde{L}^2_{t}(\dot{B}^{\frac{5}{2}} )}^{m}+\|n\|_{\widetilde{L}^2_{t}(\dot{B}^{\frac{5}{2}} )}^{h},\\
\|E\|_{\widetilde{L}^2_{t}(\dot{B}^{\frac{1}{2}}\cap\dot{B}^{\frac{3}{2}} )}&\lesssim \|E\|_{\widetilde{L}^2_{t}(\dot{B}^{\frac{1}{2}} )}^{\ell}+\|E\|_{\widetilde{L}^2_{t}(\dot{B}^{\frac{3}{2}} )}^{m}+\|E\|_{\widetilde{L}^2_{t}(\dot{B}^{\frac{3}{2}} )}^{h},\\
\|B\|_{\widetilde{L}^2_{t}(\dot{B}^{\frac{3}{2}} )}&\lesssim \|B\|_{\widetilde{L}^2_{t}(\dot{B}^{\frac{3}{2}} )}^{\ell}+\|B\|_{\widetilde{L}^2_{t}(\dot{B}^{\frac{3}{2}} )}^{m}+\|B\|_{\widetilde{L}^2_{t}(\dot{B}^{\frac{3}{2}} )}^{h}.
\end{aligned}
\right.
\end{equation}
Then, it follows from \eqref{d2} and the product law $\dot{B}^{\frac{1}{2}}\hookrightarrow \dot{B}^{\frac{3}{2}}\times \dot{B}^{\frac{1}{2}}$ in \eqref{uv2} that
\begin{equation}\label{unablanlow}
\begin{aligned}
&\|u\cdot\nabla n\|_{L^1_{t}(\dot{B}^{\frac{1}{2}} )}\lesssim \|u\|_{\widetilde{L}^2_{t}(\dot{B}^{\frac{3}{2}})}\|\nabla n\|_{\widetilde{L}^2_{t}(\dot{B}^{\frac{1}{2}} )}\lesssim \|u\|_{\widetilde{L}^2_{t}(\dot{B}^{\frac{3}{2}} )}\|n\|_{\widetilde{L}^2_{t}(\dot{B}^{\frac{3}{2}} )}\lesssim \mathcal{X}(t)^2.
\end{aligned}
\end{equation}
Similarly, as $0<\var\leq 1$, we get
\begin{equation}\label{unablaulow}
\begin{aligned}
&\var\|u\cdot\nabla u\|_{L^1_{t}(\dot{B}^{\frac{1}{2}} )}+\|u\times H\|_{L^1_{t}(\dot{B}^{\frac{1}{2}} )}\lesssim \|u\|_{\widetilde{L}^2_{t}(\dot{B}^{\frac{1}{2}} )}\|(u,H)\|_{\widetilde{L}^2_{t}(\dot{B}^{\frac{3}{2}} )}\lesssim \mathcal{X}(t)^2.
\end{aligned}
\end{equation}
In accordance with the bound \eqref{prioribound}, the product law \eqref{uv2} and the composition estimate \eqref{F1}, it also holds that
\begin{equation}\label{321}
\begin{aligned}
\var\|F(n) u\|_{L^1_{t}(\dot{B}^{\frac{1}{2}} )}\lesssim \|F(n)\|_{\widetilde{L}^2_{t}(\dot{B}^{\frac{3}{2}} )}\|u\|_{\widetilde{L}^2_{t}(\dot{B}^{\frac{1}{2}} )}\lesssim \|n\|_{\widetilde{L}^2_{t}(\dot{B}^{\frac{3}{2}} )}\|u\|_{\widetilde{L}^2_{t}(\dot{B}^{\frac{1}{2}} )}\lesssim \mathcal{X}(t)^2.
\end{aligned}
\end{equation}
Recall that $\Phi(n)$ is  quadratic with respect to $n$, so it follows from \eqref{prioribound}, Lemma \ref{compositionlp} and the embedding $\dot{B}^{\frac{3}{2}} \hookrightarrow L^{\infty}$ that 
\begin{equation}\label{Philow}
\begin{aligned}
\|\Phi(n)\|_{L^1_{t}(\dot{B}^{\frac{1}{2}} )}^{\ell}&\lesssim \|n\|_{\widetilde{L}^2_{t}(\dot{B}^{\frac{3}{2}} )} ( \|n\|_{\widetilde{L}^2_{t}(\dot{B}^{\frac{1}{2}} )}^{\ell}+\|n\|_{\widetilde{L}^2_{t}(\dot{B}^{\frac{3}{2}} )}^{m}+\var^2\|n\|_{\widetilde{L}^2_{t}(\dot{B}^{\frac{5}{2}} )}^{h})\lesssim \mathcal{X}(t)^2.
\end{aligned}
\end{equation}
Inserting   the above estimates \eqref{unablanlow}-\eqref{Philow} into \eqref{nuehell}, we obtain  \eqref{low}. Hence,  the proof of Lemma \ref{lemmalow} is complete.
\end{proof}

\begin{lemma}[Medium-frequency estimates]\label{lemmamedium}
If $(n,u,E,H)$ is a classical solution to \eqref{EM1} on the time interval $[0,T]$, then the following estimate holds: 
\begin{equation}\label{medium}
\begin{aligned}
&\|(n,\var u,E,H)\|_{\widetilde{L}^{\infty}_{t}(\dot{B}^{\frac{3}{2}} )}^{m}+\|n\|_{\widetilde{L}^{2}_{t}(\dot{B}^{\frac{5}{2}} )}^{m}+\|(u,E,H)\|_{\widetilde{L}^{2}_{t}(\dot{B}^{\frac{3}{2}} )}^{m}\lesssim  \|(n_{0}, u_{0},E_{0},H_{0})\|_{\dot{B}^{\frac{3}{2}} }^{m}+\mathcal{X}(t)^2
\end{aligned}
\end{equation}
for $t\in [0,T]$ and $0<\var\leq 1$.
\end{lemma}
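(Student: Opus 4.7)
The plan is to mirror the proof of Lemma \ref{lemmalow}, localizing \eqref{EM1} via $\dot\Delta_j$ for $-1 \leq j \leq J_\var$ and reading off the same energy identities \eqref{E1}--\eqref{E4}. I would then define a Lyapunov functional $\mathcal{L}_{m,j}$ of exactly the same structural form as $\mathcal{L}_{\ell,j}$, with a suitably small parameter $\eta_2>0$, and exploit the two defining features of the medium regime: first, $\var 2^j \lesssim 1$, which keeps the cross terms $\var^2\!\int u_j\cdot\nabla n_j$, $\var^2\!\int u_j\cdot E_j$, and $\var\!\int E_j\cdot\nabla\times H_j$ controllably small, so that $\mathcal{L}_{m,j}(t) \sim \|(n_j,\var u_j,E_j,H_j)\|_{L^2}^2$; second, $2^j \gtrsim 1/2$, which makes $|\nabla n_j|^2 \sim 2^{2j}\|n_j\|_{L^2}^2$ dominate $K|n_j|^2$ in the cross term \eqref{E2} (giving heat-like dissipation for $n$) and makes $|\nabla\times H_j|^2 \gtrsim \|H_j\|_{L^2}^2$ via $\div H=0$ (giving genuine damping for $H$). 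Together these yield the coercivity
$$
\frac{d}{dt}\mathcal{L}_{m,j}(t) + 2^{2j}\|n_j\|_{L^2}^2 + \|(u_j,E_j,H_j)\|_{L^2}^2 \lesssim G_j^m(t)\sqrt{\mathcal{L}_{m,j}(t)},
$$
where $G_j^m(t)$ collects the $\dot\Delta_j$-localizations of the six nonlinear sources in \eqref{EM1}.

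Next, applying Lemma \ref{lemmaL1L2}, taking square roots, multiplying by the weight $2^{(3/2)j}$ and summing over $-1 \leq j \leq J_\var$ produces the linear part of \eqref{medium} together with the initial data contribution $\|(n_0,u_0,E_0,H_0)\|_{\dot B^{3/2}}^m$. The remaining task is to bound
$$
\bigl\|\bigl(u\cdot\nabla n,\,G(n)\div u,\,\var u\cdot\nabla u,\,\var u\times H,\,\var F(n)u,\,\Phi(n)\bigr)\bigr\|_{L^1_t(\dot B^{3/2})}^m \lesssim \mathcal{X}(t)^2.
$$
The strategy is that already illustrated in \eqref{unablanlow}--\eqref{Philow}: reconstruct the standard Besov norms of $u,n,E,H$ from the three-regime pieces of $\mathcal{X}(t)$ via \eqref{d2}, and apply the product law \eqref{uv2} and the composition estimate \eqref{F1} together with the $L^\infty$-smallness \eqref{prioribound}. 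The $\var$ prefactors in front of $u\cdot\nabla u$, $u\times H$ and $F(n)u$ compensate exactly the $\var^{-1}$ loss incurred when extracting $u$ from the weighted variable $\var u$ appearing in $\mathcal{X}(t)$.

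The main technical difficulty I anticipate is the quadratic remainder $\Phi(n)$: controlling its medium-frequency $\dot B^{3/2}$ norm uniformly in $\var$ requires, as in \eqref{Philow}, decomposing one factor of $n$ according to the three frequency regimes and using the Bernstein inequalities \eqref{bernstein} to convert the $\var$ factor carried by the high-frequency piece of $n$ in $\mathcal{X}(t)$ into a gain of derivatives, so that the whole product is bounded by $\mathcal{X}(t)^2$ uniformly in $\var$. Once this is settled, collecting the linear and nonlinear bounds yields \eqref{medium}.
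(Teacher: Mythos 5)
Your overall architecture (localization over $-1\le j\le J_\var$, a Lyapunov functional, Lemma \ref{lemmaL1L2}, then product laws) matches the paper, but there is a genuine gap in how you treat the pressure nonlinearity. You propose to keep $G_j^m(t)$ as ``the $\dot\Delta_j$-localizations of the six nonlinear sources'', i.e.\ to treat $\dot\Delta_j(G(n)\div u)$ as an external forcing and to bound $\|G(n)\div u\|_{L^1_t(\dot B^{3/2})}^m\lesssim\mathcal X(t)^2$. This bound fails uniformly in $\var$: any product estimate for this term places one derivative on $u$, giving $\|G(n)\|\cdot\|u\|_{\widetilde L^2_t(\dot B^{5/2})}$ (in whichever Hölder split), and the functional $\mathcal X(t)$ only controls $\|u\|^m_{\widetilde L^2_t(\dot B^{3/2})}$ and $\var\|u\|^h_{\widetilde L^2_t(\dot B^{5/2})}$, so by \eqref{bernstein} one pays a factor $\var^{-1}$ that is saturated when $u$ concentrates near $|\xi|\sim\var^{-1}$. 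This is exactly the derivative-loss obstruction the paper points out at the start of its proof of Lemma \ref{lemmamedium}. The fix is not a sharper product law but a structural change: rewrite $\eqref{EM1j}_{1}$ as \eqref{EM1j1} with the commutator $\mathcal R_{1,j}=[G(n),\dot\Delta_j]\div u$ (which, by \eqref{commutator}, trades the derivative on $u$ for one on $G(n)$), keep $(P'(\bar\rho)+G(n))\div u_j$ on the left, and symmetrize it against a \emph{variable-coefficient} energy $\tfrac12\int(P'(\bar\rho)+G(n))\var^2|u_j|^2$. This is why the paper's $\mathcal L_{m,j}$ is not ``of exactly the same structural form as $\mathcal L_{\ell,j}$'', and why its source term $G_j^m$ contains the extra contributions $\|\mathcal R_{1,j}\|_{L^2}$ and $\|\partial_t n\|_{L^\infty}\var\|u_j\|_{L^2}$ (the latter from differentiating the coefficient in time, controlled via \eqref{partialtnmed}). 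Without this step your claimed coercivity inequality cannot be closed.

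A secondary point: taking the $E$--$H$ cross term in the form $-\eta^{5/4}\var\int E_j\cdot\nabla\times H_j\,dx$, as in $\mathcal L_{\ell,j}$, is not enough in the medium regime. The condition $\var 2^j\lesssim1$ does give the norm equivalence $\mathcal L_{m,j}\sim\|(n_j,\var u_j,E_j,H_j)\|_{L^2}^2$, as you say, but in the dissipation balance \eqref{E4} this cross term produces the loss $\eta^{5/4}\|\nabla\times E_j\|_{L^2}^2\sim\eta^{5/4}2^{2j}\|E_j\|_{L^2}^2$, which for $2^{2j}$ up to $\var^{-2}$ cannot be absorbed by the available $O(\eta)\|E_j\|_{L^2}^2$ no matter how small $\eta$ is chosen independently of $\var$. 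The paper therefore weights this cross term by $2^{-2j}$ (see $\mathcal L_{m,j}$ and $\mathcal D_{m,j}$), which still yields the full damping $\|H_j\|_{L^2}^2$ since $2^j\gtrsim1$ there. You would need the same modification.
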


\begin{proof}
As in the proof of Lemma \ref{lemmalow}, we construct a Lyapunov functional to capture the dissipation effects for $(n,u,E,H)$ in medium frequencies. Here, $n$ behaves like heat kernel and the other components are damped. In that case, one cannot treat $\dot{\Delta}_j(G(n)\div u)$ as a source term, since it 
will cause a loss of one derivative with respect to $u$.  To overcome the difficulty, we rewrite $\eqref{EM1j}_{1}$ as
\begin{equation}\label{EM1j1}
\begin{aligned}
&\partial_{t}n_{j}+(P'(\bar{\rho})+G(n))\div u_{j}=\mathcal{R}_{1,j}-\dot{\Delta}_{j}(u\cdot\nabla n),
\end{aligned}
\end{equation}
where 
the commutator is given by $\mathcal{R}_{1,j}:=[G(n),\dot{\Delta}_{j}]\div u$.
Taking the inner product of \eqref{EM1j1} with $n_{j}$, we obtain
\begin{equation}\label{mmm}
\begin{aligned}
&\frac{1}{2}\frac{d}{dt}\|n_{j}\|_{L^2}^2+\int (P'(\bar{\rho})+G(n))\div u_{j} n_{j} \,dx\leq(\|\mathcal{R}_{1,j}\|_{L^2}+\|\dot{\Delta}_{j}(u\cdot\nabla n)\|_{L^2})\|n_{j}\|_{L^2}.
\end{aligned}
\end{equation}
In order to cancel the second term on the left-hand side of \eqref{mmm}, we multiply $\eqref{EM1j}_{2}$ by $(P'(\bar{\rho})+G(n))u_{j}$ and integrate the resulting equality over $\mathbb{R}^{3}$. Performing an integration by parts and using Cauchy-Schwarz inequality implies that   
\begin{equation}\label{mmm1}
\begin{aligned}
&\frac{\var^2}{2}\frac{d}{dt}\int (P'(\bar{\rho})+G(n))|u_{j}|^2\,dx-\int (P'(\bar{\rho}+G(n))\div u_{j} n_{j} \,dx\\
&\quad~+\int\Big( (P'(\bar{\rho})+G(n)) |u_{j}|^2+(P'(\bar{\rho})+G(n))E_{j}\cdot u_{j}\Big)\,dx\\
&~\leq \frac{\var^2}{2}\|\partial_{t}G(n)\|_{L^{\infty}}\|u_{j}\|_{L^2}^2+\|\nabla G(n)\|_{L^{\infty}}\|u_{j}\|_{L^2}\|n_{j}\|_{L^2}\\
&~\quad+(P'(\bar{\rho})+\|G(n)\|_{L^{\infty}})\|\dot{\Delta}_{j}(\var u\cdot\nabla u, u\times H)\|_{L^2}\var \|u_{j}\|_{L^2}.
\end{aligned}
\end{equation}
Combining  \eqref{mmm}-\eqref{mmm1} and \eqref{E13}, we arrive at
\begin{equation}\label{E11f}
\begin{aligned}
&\frac{1}{2}\frac{d}{dt}\int\Big(|n_{j}|^2+ (P'(\bar{\rho})+G(n))\var^2|u_{j}|^2+\frac{1}{K}|E_{j}|^2+\frac{1}{K} |H_{j}|^2\Big)\,dx\\
&\quad~+\int\Big( (P'(\bar{\rho})+G(n)) |u_{j}|^2+G(n)E_{j}\cdot u_{j}\Big)\,dx\\
&~\leq  (\|\mathcal{R}_{1,j}\|_{L^2}+\|\dot{\Delta}_{j}(u\cdot\nabla n)\|_{L^2})\|n_{j}\|_{L^2}+\frac{\var^2}{2}\|\partial_{t}G(n)\|_{L^{\infty}}\|u_{j}\|_{L^2}^2+\|\nabla G(n)\|_{L^{\infty}}\|u_{j}\|_{L^2}\|n_{j}\|_{L^2}\\
&~\quad+(P'(\bar{\rho}+\|G(n)\|_{L^{\infty}})\|\dot{\Delta}_{j}(\var u\cdot\nabla u, u\times h)\|_{L^2}\var \|u_{j}\|_{L^2}+\frac{\var }{K}\|\dot{\Delta}_{j}(F(n)u)\|_{L^2}\|E_{j}\|_{L^2}.
\end{aligned}
\end{equation}
As \eqref{E2}, it follows from  $\eqref{EM1j}_{2}$ and \eqref{EM1j1} that, for $\eta_{2}\in(0,1)$,
\begin{equation}\label{E12f}
\begin{aligned}
&\var^2\frac{d}{dt}\int u_{j}\cdot \nabla n_{j} \,dx+\int \Big( |\nabla n_{j}|^2 +K|n_{j}|^2-(P'(\bar{\rho}+G(n))\var^2|\div u_{j}|^2+u_{j}\cdot \nabla n_{j}\Big)\,dx\\
&\leq \|\dot{\Delta}_{j}(\var u\cdot\nabla u,u\times H)\|_{L^2}\var\|\nabla n_{j}\|_{L^2}+\var\|\nabla\dot{\Delta}_{j}(u\cdot\nabla n)\|_{L^2}\var\|u_{j}\|_{L^2}+\var\|\nabla\mathcal{R}_{1,j}\|_{L^2}\var\| u_{j}\|_{L^2}\\
&\quad+\|\dot{\Delta}_{j}\Phi(n)\|_{L^2}\|n_{j}\|_{L^2}.
\end{aligned}
\end{equation} In view of \eqref{E3}-\eqref{E4} and  \eqref{E11f}-\eqref{E12f}, we denote
\begin{equation}\nonumber
\begin{aligned}
\mathcal{L}_{m,j}(t):&=\frac{1}{2}\int\Big(|n_{j}|^2+ (P'(\bar{\rho})+G(n))\var^2|u_{j}|^2+\frac{1}{K}|E_{j}|^2+\frac{1}{K} |H_{j}|^2\Big)\,dx\\
&\quad+\eta_{2}\var^2\int u_{j}\cdot \nabla n_{j} \,dx+ \eta_{2}\var^2 \int u_{j}\cdot E_{j} \,dx-\eta^{\frac{5}{4}}_{2}\var 2^{-2j}\int E_{j}\cdot\nabla \times H_{j} \,dx
\end{aligned}
\end{equation}
and
\begin{equation}\nonumber
\begin{aligned}
\mathcal{D}_{m,j}(t):&=\int\Big( (P'(\bar{\rho})+G(n)) |u_{j}|^2+G(n)E_{j}\cdot u_{j}\Big)\,dx\\
&\quad+\eta_{2}  \int \Big( |\nabla n_{j}|^2 +K|n_{j}|^2-(P'(\bar{\rho})+G(n))\var^2|\div u_{j}|^2+u_{j}\cdot \nabla n_{j}\Big)\,dx\\
&\quad+\eta_{2} \int \Big(|E_{j}|^2+\frac{1}{K}|\div E_{j}|^2+  u_{j}\cdot E_{j}+\var (u_{j}\times \bar{B}) \cdot E_{j} - \var u_{j}\cdot (\nabla\times H_{j})- \var^2\bar{\rho}|u_{j}|^2 \Big)\,dx\\
&\quad+\eta^{\frac{5}{4}}_{2} 2^{-2j} \int (|\nabla\times H_{j}|^2- \var\bar{\rho}u_{j}\cdot \nabla \times H_{j} -|\nabla\times E_{j}|^2 )\,dx.
\end{aligned}
\end{equation}
Let $\delta_0\leq \frac{P'(\bar{\rho})}{2(1+\|P''\|_{L^{\infty}})}$. It follows from  \eqref{prioribound} that
\begin{align}
\frac{1}{2}P'(\bar{\rho})\leq P'(\bar{\rho})+G(n) \leq \frac{3}{2}P'(\bar{\rho}).\label{nonon}
\end{align}
Furthermore, as \eqref{Esimlow}, it is not difficult to check that   
\begin{equation}\label{EEEE}
\left\{
\begin{aligned}
\mathcal{L}_{m,j}(t)&\sim \|(n_{j},\var u_{j},E_{j},H_{j})\|_{L^2}^2,\\
\mathcal{D}_{m,j}(t)&\gtrsim 2^{2j}\|n_{j}\|_{L^2}^2+\|(u_{j},E_{j},H_{j})\|_{L^2}^2
\end{aligned}
\right.
\end{equation}
for $-1\leq j\leq J_{\var}$, provided that we take the constant $\eta_{2}$ (independent of $\var$) small enough. Therefore, together with
\eqref{prioribound}, \eqref{E3}-\eqref{E4}, \eqref{E11f}-\eqref{E12f} and \eqref{EEEE}, one can get 
the following localized Lyapunov inequality:
\begin{equation}\label{L2}
\begin{aligned}
&\frac{d}{dt}\mathcal{L}^{m}_{j}(t)+ 2^{2j}\|n_{j}\|_{L^2}^2+\|(u_{j},E_{j},H_{j})\|_{L^2}^2\lesssim  G_{j}^{m}(t)\sqrt{\mathcal{L}^{m}_{j}(t)},
\end{aligned}
\end{equation}
with
$$
G_{j}^{m}(t):=\|\dot{\Delta}_{j}(u\cdot\nabla n,\var u\cdot\nabla u,\var F(n)u, \var u\times H,\Psi(n))\|_{L^2}+\|\partial_{t}n\|_{L^{\infty}}\var\|u_{j}\|_{L^2}+\|\mathcal{R}_{1,j}\|_{L^2}.
$$
Then it follows from Lemma \ref{lemmaL1L2} that 
\begin{equation}\nonumber
\begin{aligned}
&\|(n_{j},\var u_{j},E_{j},H_{j})\|_{L^{\infty}_{t}(L^2)}+2^{j}\|n_{j}\|_{L^2_{t}(L^2)}+\|(u_{j},E_{j},H_{j})\|_{L^2_{t}(L^2)}\\
&\quad\lesssim \|(n_{j},\var u_{j},E_{j},H_{j})(0)\|_{L^2}+\|G^{m}_{j}\|_{L^1_{t}(L^2)}
\end{aligned}
\end{equation}
 for $-1\leq j\leq J_{\var}$, 
which implies that
\begin{equation}\label{medium11}
\begin{aligned}
&\|(n,\var u,E,H)\|_{\widetilde{L}^{\infty}_{t}(\dot{B}^{\frac{3}{2}} )}^{m}+\|(u,E,H)\|_{\widetilde{L}^{2}_{t}(\dot{B}^{\frac{3}{2}} )}^{m}\\
&\quad\lesssim \|(n_{0}, u_{0},E_{0},H_{0})\|_{\dot{B}^{\frac{3}{2}} }^{m}+\|(u\cdot\nabla n,\var u\cdot\nabla u,u\times H,\var F(n)u,\Phi(n))\|_{L^1_{t}(\dot{B}^{\frac{3}{2}} )}^{m}\\
&\quad\quad+\var\|\partial_{t}n\|_{L^2_{t}(L^{\infty})}\|u\|_{\widetilde{L}^{2}_{t}(\dot{B}^{\frac{3}{2}} )}^{m}+
\sum_{j\in\mathbb{Z}} 2^{\frac{d}{2}j}\|\mathcal{R}_{1,j}\|_{L^1_{t}(L^2)}.
\end{aligned}
\end{equation}
In what follows, we estimate the  nonlinear terms on the right-hand side of \eqref{medium11}. Similarly to \eqref{d2}, it follows from \eqref{bernstein} that \begin{align}
\var\|u\|_{\widetilde{L}^2_{t}(\dot{B}^{\frac{5}{2}} )}\lesssim \|u\|_{\widetilde{L}^2_{t}(\dot{B}^{\frac{1}{2}} )}^{\ell}+\|u\|_{\widetilde{L}^2_{t}(\dot{B}^{\frac{3}{2}} )}^{m}+\var \|u\|_{\widetilde{L}^2_{t}(\dot{B}^{\frac{5}{2}} )}^{h}.\label{d20}
\end{align}
Hence, by  \eqref{d2}, \eqref{d20} and \eqref{uv2}, we  have
\begin{equation}\label{unablaumed}
\begin{aligned}
&\|(u\cdot\nabla n,  \var u\cdot\nabla u)\|_{L^1_{t}(\dot{B}^{\frac{3}{2}} )}\lesssim \|u\|_{\widetilde{L}^2_{t}(\dot{B}^{\frac{3}{2}} )}(\|n\|_{\widetilde{L}^2_{t}(\dot{B}^{\frac{5}{2}} )}+\var\|u\|_{\widetilde{L}^2_{t}(\dot{B}^{\frac{5}{2}} )})\lesssim \mathcal{X}(t)^2.
\end{aligned}
\end{equation}
Similarly,
\begin{equation}\label{uHmed}
\begin{aligned}
\|u\times H\|_{L^1_{t}(\dot{B}^{\frac{3}{2}} )}\lesssim \|u\|_{\widetilde{L}^2_{t}(\dot{B}^{\frac{3}{2}} )}\|H\|_{\widetilde{L}^2_{t}(\dot{B}^{\frac{3}{2}} )}\lesssim \mathcal{X}(t)^2.
\end{aligned}
\end{equation}
By using \eqref{prioribound}, \eqref{d2}, \eqref{uv2} and \eqref{F1}, we get
\begin{equation}\label{Fuumed}
\begin{aligned}
\|F(n)u\|_{L^1_{t}(\dot{B}^{\frac{3}{2}} )} \lesssim \|u\|_{\widetilde{L}^2_{t}(\dot{B}^{\frac{3}{2}} )}\|n\|_{\widetilde{L}^2_{t}(\dot{B}^{\frac{3}{2}} )}\lesssim \mathcal{X}(t)^2.
\end{aligned}
\end{equation}
In addition, employing  the composition law in Lemma \ref{compositionlp} once again leads to
\begin{equation}\label{Psimed}
\begin{aligned}
\|\Phi(n)\|_{L^1_{t}(\dot{B}^{\frac{3}{2}} )}^{m}&\lesssim  \|n\|_{\widetilde{L}^2_{t}(\dot{B}^{\frac{3}{2}} )}^2\lesssim \mathcal{X}(t)^2.
\end{aligned}
\end{equation}
According to  \eqref{EM1}, \eqref{d2}, \eqref{d20} and $\dot{B}^{\frac{3}{2}}\hookrightarrow L^{\infty}$, it holds that
\begin{equation}\label{partialtnmed}
\begin{aligned}
\var\|\partial_{t}n\|_{L^2_{t}(L^{\infty})}&\lesssim \var\|u\|_{L^{\infty}_{t}(L^{\infty})}\|\nabla n\|_{\widetilde{L}^{2}_{t}(L^{\infty})}+(\bar{\rho}+\|G(n)\|_{L^{\infty}_{t}(L^{\infty})})\var\|\div u\|_{L^2_{t}(L^{\infty})}\\
&\lesssim \|n\|_{\widetilde{L}^2_{t}(\dot{B}^{\frac{5}{2}} )}+\var\|u\|_{\widetilde{L}^2_{t}(\dot{B}^{\frac{5}{2}} )}\lesssim \mathcal{X}(t).
\end{aligned}
\end{equation}
To bound the commutator term
$\mathcal{R}_{1,j}$, using \eqref{prioribound}, \eqref{commutator} and \eqref{F1}, we have
\begin{equation}\label{R1med}
\begin{aligned}
\sum_{j\in\mathbb{Z}} 2^{\frac{d}{2}j}\|\mathcal{R}_{1,j}\|_{L^1_{t}(L^2)}&\lesssim\|G(n)\|_{\widetilde{L}^2_{t}(\dot{B}^{\frac{5}{2}} )}\|\div u\|_{\widetilde{L}^2_{t}(\dot{B}^{\frac{1}{2}} )}\lesssim\|n\|_{\widetilde{L}^2_{t}(\dot{B}^{\frac{3}{2}} )}\|u\|_{\widetilde{L}^2_{t}(\dot{B}^{\frac{3}{2}} )}\lesssim \mathcal{X}(t)^2.
\end{aligned}
\end{equation}
Finally, substituting the above estimates \eqref{unablaumed}-\eqref{R1med} into \eqref{medium11}, we arrive at \eqref{medium}. This completes the proof of Lemma \ref{lemmamedium}.
\end{proof}

\begin{lemma}[High-frequency estimates]\label{lemmahigh}
If $(n,u,E,H)$ is a classical solution to \eqref{EM1} on the time interval $[0,T]$, then the following estimate holds: 
\begin{equation}\label{high}
\begin{aligned}
&\var\|(n,\var u,E,H)\|_{\widetilde{L}^{\infty}_{t}(\dot{B}^{\frac{5}{2}} )}^{h}+\|(n,\var u)\|_{\widetilde{L}^{2}_{t}(\dot{B}^{\frac{5}{2}} )}^{h}+\|(E,H)\|_{\widetilde{L}^{2}_{t}(\dot{B}^{\frac{3}{2}} )}^{h}\\
&\lesssim \var\|(n_{0}, u_{0},E_{0},H_{0})\|_{\dot{B}^{\frac{5}{2}} }^{h}+\mathcal{X}(t)^2+\mathcal{X}(t)^3
\end{aligned}
\end{equation}
for $t\in [0,T]$ and $0<\var\leq 1$.
\end{lemma}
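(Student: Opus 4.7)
Following the pattern of Lemmas~\ref{lemmalow} and~\ref{lemmamedium}, the plan is to build, for each dyadic block $j\ge J_{\var}-1$, a Lyapunov functional $\mathcal{L}_{h,j}(t)\sim\|(n_j,\var u_j,E_j,H_j)\|_{L^2}^2$ satisfying a localized inequality of the form
\[
\frac{d}{dt}\mathcal{L}_{h,j}(t)+D_{h,j}(t)\lesssim G_j^h(t)\sqrt{\mathcal{L}_{h,j}(t)},
\]
and then to conclude by Lemma~\ref{lemmaL1L2}, multiply by $\var\,2^{5j/2}$, and sum over $j\ge J_{\var}-1$. I will reuse the reformulation \eqref{EM1j1} of the $n$-equation (with $G(n)\div u_j$ moved to the left and the commutator $\mathcal{R}_{1,j}=[G(n),\dot{\Delta}_j]\div u$ appearing on the right) to avoid the one-derivative loss in the pressure nonlinearity that would otherwise be fatal at the critical regularity $\dot{B}^{\frac{5}{2}}$.

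Guided by Proposition~\ref{proppointwise} -- where on high frequencies $(1+\var^2|\xi|^2)^{-1}\sim(\var^2 2^{2j})^{-1}$ -- the hypocoercivity cross terms are chosen as
\begin{align*}
\mathcal{L}_{h,j}(t)&:=\frac{1}{2}\int\Big(|n_j|^2+(P'(\bar{\rho})+G(n))\var^2|u_j|^2+\frac{1}{K}|E_j|^2+\frac{1}{K}|H_j|^2\Big)\,dx\\
&\quad+\frac{\eta_3}{2^{2j}}\int u_j\cdot\nabla n_j\,dx+\frac{\eta_3}{2^{2j}}\int u_j\cdot E_j\,dx-\frac{\eta_3^{5/4}}{\var\,2^{4j}}\int E_j\cdot(\nabla\times H_j)\,dx,
\end{align*}
with $\eta_3$ a small absolute constant. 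Because $\var\,2^j\gtrsim 1$ on this regime, each cross term is controlled by a small fraction of the principal quadratic form, so that $\mathcal{L}_{h,j}\sim\|(n_j,\var u_j,E_j,H_j)\|_{L^2}^2$. Differentiating along \eqref{EM1j}, using $\div E_j=-Kn_j-\dot{\Delta}_j\Phi(n)$ and Bernstein's inequality, the first cross term yields $(\var^2 2^{2j})^{-1}(\|\nabla n_j\|_{L^2}^2+K\|n_j\|_{L^2}^2)\gtrsim \var^{-2}\|n_j\|_{L^2}^2$; the second produces $(\var^2 2^{2j})^{-1}\|E_j\|_{L^2}^2$; and the third delivers $(\var^2 2^{2j})^{-1}\|H_j\|_{L^2}^2$. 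The basic symmetric energy supplies the $\|u_j\|_{L^2}^2$-damping thanks to \eqref{nonon}. The undesirable by-products (involving $u_j\times\bar{B}$, $\var^2|\div u_j|^2/2^{2j}$, $\|\nabla\times E_j\|_{L^2}^2/(\var^2 2^{4j})$, and the couplings $\langle u_j,\nabla n_j\rangle$, $\langle u_j,E_j\rangle$) are absorbed into the four dissipations above, exploiting $\var\,2^j\gtrsim 1$ to pay for each extra derivative with a power of~$\var$.

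After applying Lemma~\ref{lemmaL1L2}, multiplying by $\var\,2^{5j/2}$ and summing over $j\ge J_{\var}-1$, the left-hand side reconstructs exactly the four norms in \eqref{high}, while the initial contribution yields $\var\|(n_0,u_0,E_0,H_0)\|_{\dot{B}^{\frac{5}{2}}}^{h}$ (using $u(0,\cdot)=\var^{-1}u_0$, so that the weight $\var$ turns $\|\var u_j(0)\|_{L^2}$ back into $\|u_{0,j}\|_{L^2}$). The source $G_j^h$ collects $\dot{\Delta}_j(u\cdot\nabla n,\,\var u\cdot\nabla u,\,\var F(n)u,\,u\times H,\,\Phi(n))$, the commutator $\mathcal{R}_{1,j}$, and a remainder $\var\|\partial_t n\|_{L^{\infty}}\|u_j\|_{L^2}$ arising when $\partial_t$ falls on $G(n)$ in the principal energy. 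The main obstacle is that the energy lives at the critical level $\dot{B}^{\frac{5}{2}}$ while the $E$- and $H$-dissipations only live at $\dot{B}^{\frac{3}{2}}$ (the regularity-loss phenomenon), so the nonlinear estimates cannot be closed purely within the high-frequency regime. Instead, the product and composition laws \eqref{uv2}--\eqref{F1}, Lemma~\ref{compositionlp}, and the commutator estimate \eqref{commutator}, combined with the three-regime inequalities \eqref{bernstein} that trade one derivative for a factor of~$\var$, allow each nonlinear term to be bounded by $\mathcal{X}(t)^2+\mathcal{X}(t)^3$; the decisive factor $\var$ on the right of \eqref{high} is produced precisely by the high-frequency lines of \eqref{bernstein} when rerouting Ḃ$^{5/2}$-frequencies into the low and medium pieces of $\mathcal{X}(t)$.
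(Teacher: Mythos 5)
Your overall architecture coincides with the paper's: the same hypocoercive Lyapunov functional with cross terms weighted by $2^{-2j}$, $2^{-2j}$ and $\var^{-1}2^{-4j}$, the equivalence $\mathcal{L}_{h,j}\sim\|(n_j,\var u_j,E_j,H_j)\|_{L^2}^2$ via $\var\,2^{j}\gtrsim1$, the conclusion through Lemma \ref{lemmaL1L2} and summation of $\var\,2^{5j/2}$-weights. However, there is a genuine gap in your treatment of the convection terms. You propose to reuse only the reformulation \eqref{EM1j1}, i.e.\ to commute $G(n)\div u$ while keeping $\dot{\Delta}_j(u\cdot\nabla n)$ and $\dot{\Delta}_j(\var u\cdot\nabla u)$ as external source terms in $G_j^h$. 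At the top regularity this cannot be closed: the required bound is
\begin{equation*}
\var\|u\cdot\nabla n\|_{L^1_t(\dot{B}^{5/2})}^{h}\lesssim \var\|u\|_{L^2_t(L^\infty)}\|\nabla n\|_{\widetilde L^2_t(\dot{B}^{5/2})}+\var\|\nabla n\|_{L^2_t(L^\infty)}\|u\|_{\widetilde L^2_t(\dot{B}^{5/2})},
\end{equation*}
and the first term involves $\|n\|_{\dot{B}^{7/2}}$, one full derivative above what $\mathcal{X}(t)$ controls; on the high-frequency block Bernstein's inequality \eqref{bernstein} only lets you trade derivatives in the wrong direction ($\var\|n\|^{h}_{\dot{B}^{7/2}}\gtrsim\|n\|^{h}_{\dot{B}^{5/2}}$), so no factor of $\var$ can repair this. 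The same failure occurs for $\var u\cdot\nabla u$. This is precisely why the paper replaces \eqref{EM1j} by \eqref{EM1jj}, keeping the transport operators $u\cdot\nabla n_j$ and $\var^2u\cdot\nabla u_j$ on the left (where integration by parts only costs $\|\div u\|_{L^\infty}\|(n_j,\var u_j)\|_{L^2}^2$) and introducing the two additional commutators $\mathcal{R}_{2,j}=[u,\dot{\Delta}_j]\nabla n$ and $\mathcal{R}_{3,j}=[u,\dot{\Delta}_j]\nabla u$, which are then controlled by \eqref{commutator} at the cost of only $\|\nabla u\|_{\dot{B}^{3/2}}$. Your proof needs this extra step; as written, the nonlinear estimate for the convection terms fails. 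A secondary, related omission: the term $u\times H$ must be kept with its $\var$ prefactor and routed through the $L^2$-in-time slot of Lemma \ref{lemmaL1L2}, since $\var\|H\|^{h}_{\widetilde L^2_t(\dot{B}^{5/2})}$ is not controlled by $\mathcal{D}$ and one must instead pair $\var\|H\|_{\widetilde L^\infty_t(\dot{B}^{5/2})}^{h}$ from $\mathcal{E}$ with $\|u\|_{\widetilde L^2_t(\dot{B}^{3/2})}$.
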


\begin{proof}
As emphasized before, a regularity-loss phenomenon for $E$ and $H$ occurs in the high-frequency regime. This is the main   difference in comparison with recent efforts \cite{CBD3,danchinnoterelaxation} concerning hyperbolic systems with symmetric relaxation. To avoid the loss of one derivative arising from
the nonlinear terms involving the components $(n,u)$, we shall introduce some commutators and rewrite \eqref{EM1j} as
\begin{equation}\label{EM1jj}
\left\{
\begin{aligned}
&\partial_{t}n_{j}+u\cdot\nabla n_{j}+(P'(\bar{\rho})+G(n))\div u_{j}=\mathcal{R}_{1,j}+\mathcal{R}_{2,j},\\
&\var^2\partial_{t}u_{j}+\var^2 u\cdot\nabla u_{j}+\nabla n_{j}+E_{j}+ u_{j}+\var u_{j}\times \bar{B}=-\var\dot{\Delta}_{j}(u\times H)-\var^2\mathcal{R}_{3,j},\\
&\var\partial_{t}E_{j}-\nabla\times H_{j}-\bar{\rho}\var u_{j}=\var\dot{\Delta}_{j}(F(n)u),\\
&\var\partial_{t}H_{j}+\nabla\times E_{j}=0,\\
&\div E_{j}=-Kn_{j}-\dot{\Delta}_{j}\Phi(n),\quad\quad \div H_{j}=0
\end{aligned}
\right.
\end{equation}
with
\[  \mathcal{R}_{1,j}=[G(n),\dot{\Delta}_{j}]\div u, \quad
\mathcal{R}_{2,j}=[u,\dot{\Delta}_{j}]\nabla a\quad \mbox{and}\quad
\mathcal{R}_{3,j}:=[u,\dot{\Delta}_{j}]\nabla n.  \]
Similarly to \eqref{mmm1}-\eqref{E11f}, through a direct computation, we are able to get 
\begin{equation}\label{E12hh}
\begin{aligned}
&\frac{1}{2}\frac{d}{dt}\int\Big(|n_{j}|^2+ (P'(\bar{\rho})+G(n))\var^2 |u_{j}|^2+\frac{1}{K}|E_{j}|^2+\frac{1}{K} |H_{j}|^2\Big)\,dx\\
&\quad~+\int\Big( (P'(\bar{\rho})+G(n)) |u_{j}|^2+G(n)E_{j}\cdot u_{j}+\var G(n)(u_{j}\times \bar{B})\cdot u_{j} \Big)\,dx\\
&~\leq  (P'(\bar{\rho})+\|G(n)\|_{L^{\infty}})\var\|\dot{\Delta}_{j}(u\times H)\|_{L^2}\|u_{j}\|_{L^2}+ \frac{1}{K}\var \|\dot{\Delta}_{j}(F(n)u)\|_{L^2}\|E_{j}\|_{L^2}\\
&\quad~+\frac{1}{2}\|\div u\|_{L^{\infty}}\|n_{j}\|_{L^2}^2+ \frac{1}{2}\Big(P'(\bar{\rho})+\|G(n)\|_{L^{\infty}})\|\div u\|_{L^{\infty}}\var^2\|u_{j}\|_{L^2}^2\\
&\quad~+\|\nabla G(n)\|_{L^{\infty}}\| u\|_{L^{\infty}}\var^2 \|u_{j}\|_{L^2}^2+\frac{\var^2}{2}\|\partial_{t}G(n)\|_{L^{\infty}}\|u_{j}\|_{L^2}^2\\
&\quad~+\Big(P'(\bar{\rho})+\|G(n)\|_{L^{\infty}})\|(\cR_{1,j},\cR_{2,j},\var \cR_{3,j})\|_{L^2}\| (n_{j},\var u_{j})\|_{L^2}.
\end{aligned}
\end{equation}
In order to get the dissipation for $n_j$, we perform the following cross estimate
\begin{equation}\label{cross}
\begin{aligned}
&\var^2\frac{d}{dt}\int u_{j}\cdot \nabla n_{j} \,dx+\int \Big( |\nabla n_{j}|^2 +K|n_{j}|^2-(P'(\bar{\rho}+G(n))\var^2|\div u_{j}|^2+u_{j}\cdot \nabla n_{j}\Big)\,dx\\
&~\leq 2\var^2 \|u\|_{L^{\infty}}\|\nabla u_{j}\|_{L^2}\|\nabla n_{j}\|_{L^2}+\var\|\dot{\Delta}_{j}(u\times H)\|_{L^2}\|\nabla n_{j}\|_{L^2}\\&\hspace{5mm}+\|(\cR_{1,j},\cR_{2,j},\var \cR_{3,j})\|_{L^2}\|\nabla (\var u_{j},n_{j})\|_{L^2}.
\end{aligned}
\end{equation}
Let $\eta_3\in(0,1)$. With aid of \eqref{E3}-\eqref{E4} and \eqref{E12hh}-\eqref{cross}, we denote 
\begin{equation}\nonumber
\begin{aligned}
\mathcal{L}_{h,j}(t):&=\frac{1}{2}\int\Big(|n_{j}|^2+ (P'(\bar{\rho})+G(n))|u_{j}|^2+\frac{1}{K}|E_{j}|^2+\frac{1}{K} |H_{j}|^2\Big)\,dx\\
&\quad+\eta_{3} 2^{-2j}\int u_{j}\cdot \nabla n_{j} \,dx+\eta_{3}2^{-2j} \int u_{j}\cdot E_{j} \,dx-\eta_{3}^{\frac{5}{4}} \frac{1}{\var} 2^{-4j}\int E_{j}\cdot\nabla \times H_{j} \,dx,
\end{aligned}
\end{equation}
and
\begin{equation*}
\begin{aligned}
\mathcal{D}_{h,j}(t):&=\int\Big( (P'(\bar{\rho})+G(n)) |u_{j}|^2+G(n)E_{j}\cdot u_{j}+\var G(n)(u_{j}\times \bar{B})\cdot u_{j} \Big)\,dx\\
&\quad+\eta_{3}\frac{1}{\var^2} 2^{-2j} \int \Big( |\nabla n_{j}|^2 +K|n_{j}|^2-(P'(\bar{\rho}+G(n))\var^2|\div u_{j}|^2+u_{j}\cdot \nabla n_{j}\Big)\,dx\\
&\quad+\eta_{3}\frac{1}{\var^2} 2^{-2j}\int \Big(|E_{j}|^2+\frac{1}{K}|\div E_{j}|^2+  u_{j}\cdot E_{j}+\var (u_{j}\times \bar{B}) \cdot E_{j} - \var u_{j}\cdot (\nabla\times H_{j})- \var^2\bar{\rho}|u_{j}|^2 \Big)\,dx\\
&\quad+\eta_{3}^{\frac{5}{4}} \frac{1}{\var^2} 2^{-4j} \int (|\nabla\times H_{j}|^2- \var\bar{\rho}u_{j}\cdot \nabla \times H_{j} -|\nabla\times E_{j}|^2 )\,dx
\end{aligned}
\end{equation*}
for $j\geq J_{\var}-1$. Recalling \eqref{nonon} and the fact that $2^{-j}\lesssim \var$, one can verify that
\begin{equation}\label{Ehsim}
\left\{
\begin{aligned}
\mathcal{L}_{h,j}(t)&\sim \|(n_{j},\var u_{j},E_{j},H_{j})\|_{L^2}^2,\\
\mathcal{D}_{h,j}(t)&\gtrsim \frac{1}{\var^2}\|n_{j}\|_{L^2}^2+\|u_{j}\|_{L^2}^2 +\frac{1}{\var^2}2^{-2j}  \| (E_{j},H_{j})\|_{L^2}^2,
\end{aligned}
\right.
\end{equation}
if $\eta_3$ is chosen to be small enough. With the help of \eqref{E3}-\eqref{E4}, \eqref{nonon}, \eqref{E12hh}-\eqref{Ehsim}, we obtain for $j\leq J_{\var}-1,$ 
\begin{equation}\label{L3}
\begin{aligned}
&\frac{d}{dt}\mathcal{L}^{h}_{j}(t)+\frac{1}{\var^2}\|n_{j}\|_{L^2}^2+\|u_{j}\|_{L^2}^2+\frac{1}{\var^2}2^{-2j}\|(E_{j},H_{j})\|_{L^2}^2\\
&\lesssim G_{1,j}^{h}(t)\sqrt{\mathcal{L}^{h}_{j}(t)}+G_{1,j}^{h}(t) (\|u_{j}\|_{L^2}+\frac{1}{\var}\|n_{j}\|_{L^2}^2), 
\end{aligned}
\end{equation}
where
\begin{equation}\nonumber
\begin{aligned}
G_{1,j}^{h}(t):&=\|\dot{\Delta}_{j}(\var F(n)u,\Phi(n))\|_{L^2}+(\|\div u\|_{L^{\infty}}+\|\partial_{t}n\|_{L^{\infty}}) \|(n_{j},\var u_{j})\|_{L^2}\\
&\quad+(1+\var\|\nabla n\|_{L^{\infty}})\|u\|_{L^{\infty}}\|u_{j}\|_{L^2}+\|(\mathcal{R}_{1,j},\mathcal{R}_{2,j},\var \mathcal{R}_{3,j})\|_{L^2},\\
G_{2,j}^{h}(t):&=\var\|\dot{\Delta}_{j}(u\times B)\|_{L^2}.
\end{aligned}
\end{equation}
Furthermore, it follows from Lemma \ref{lemmaL1L2} and \eqref{L3} that
\begin{equation}\label{dggg}
\begin{aligned}
& \var\|(n_{j},\var u_{j},E_{j},H_{j})\|_{L^{\infty}_{t}(L^2)}+\|n_{j}\|_{L^2_{t}(L^2)}+ \var\|u_{j}\|_{L^2_{t}(L^2)}+2^{-j}\|(E_{j},H_{j})\|_{L^2_{t}(L^2)}\\
&\quad\lesssim \var\|(n_{j},u_{j},E_{j},H_{j})(0)\|_{L^2}+\var\|G^{h}_{1,j}\|_{L^1_{t}(L^2)}+\var\|G^{h}_{2,j}\|_{L^2_{t}(L^2)}.
\end{aligned}
\end{equation}
Multiplying \eqref{dggg} by $2^{j(\frac{d}{2}+1)}$ and summing the resulting inequality over $j\geq J_{\var}-1$, we get
\begin{equation}\label{fgddgg}
\begin{aligned}
&\var\|(n,\var u,E,H)\|_{\widetilde{L}^{\infty}_{t}(\dot{B}^{\frac{5}{2}} )}^{h}+\|n\|_{\widetilde{L}^{2}_{t}(\dot{B}^{\frac{5}{2}} )}^{h}+\var \|u\|_{\widetilde{L}^{2}_{t}(\dot{B}^{\frac{5}{2}} )}^{h}+\|(E,H)\|_{\widetilde{L}^{2}_{t}(\dot{B}^{\frac{3}{2}} )}^{h}\\
&\quad\lesssim \var\|(n_{0}, u_{0},E_{0},H_{0})\|_{\dot{B}^{\frac{5}{2}} }^{h}+\var\|(F(n)u,\Phi(n))\|_{L^{1}_{t}(\dot{B}^{\frac{5}{2}} )}^{h}\\
&\quad\quad+\var(\|\div u\|_{L^2_{t}(L^{\infty})}+\|\partial_t n\|_{L^2_{t}(L^{\infty})})\|(n,\var u)\|_{\widetilde{L}^{2}_{t}(\dot{B}^{\frac{5}{2}} )}^{h}\\
&\quad\quad+(1+\var\|\nabla n\|_{L^{\infty}_{t}(L^{\infty})})\|u\|_{L^2_{t}(L^{\infty})}\var\|u\|_{\widetilde{L}^{2}_{t}(\dot{B}^{\frac{5}{2}} )}^{h}\\
&\quad\quad+\var\sum_{j\geq J_{\var}-1}2^{(\frac{d}{2}+1)j}\|(\mathcal{R}_{1,j},\cR_{2,j},\cR_{3,j})\|_{L^1_{t}(L^2)}+\var^2\|u\times H\|_{\widetilde{L}^2_{t}(\dot{B}^{\frac{5}{2}} )}^{h}.
\end{aligned}
\end{equation}
It follows from \eqref{uv1} and  \eqref{F1} that
\begin{equation}\nonumber
\begin{aligned}
\var\| F(n)u\|_{L^{1}_{t}(\dot{B}^{\frac{5}{2}} )}^{h}&\lesssim \var\|F(n)\|_{L^2_t(L^{\infty})} \|u\|_{\widetilde{L}^{2}_{t}(\dot{B}^{\frac{5}{2}})}+\var \|F(n)\|_{\widetilde{L}^{2}_{t}(\dot{B}^{\frac{5}{2}})} \|u\|_{L^2_t(L^{\infty})}\\
&\lesssim  \|n\|_{L^2_t(L^{\infty})}\var \|u\|_{\widetilde{L}^{2}_{t}(\dot{B}^{\frac{5}{2}})}+\|n\|_{\widetilde{L}^{2}_{t}(\dot{B}^{\frac{5}{2}})}\|u\|_{\widetilde{L}^{2}_{t}(\dot{B}^{\frac{3}{2}})}.
\end{aligned}
\end{equation}
Noting that \eqref{d2} and \eqref{d20}, we get 
\begin{equation}\nonumber
\begin{aligned}
\var\| F(n)u\|_{L^{1}_{t}(\dot{B}^{\frac{5}{2}} )}^{h}\lesssim \mathcal{X}(t)^2.
\end{aligned}
\end{equation}
As $\Phi(0)=\Phi'(0)=0$, employing \eqref{q2} with $(s,\sigma)=(\frac{5}{2},\frac{3}{2})$ yields
\begin{equation}\nonumber
\begin{aligned}
\var\|\Phi(n)\|_{L^1_{t}(\dot{B}^{\frac{5}{2}} )}^{m}&\lesssim  \|n\|_{\widetilde{L}^2_{t}(\dot{B}^{\frac{3}{2}} )}(\|n\|_{\widetilde{L}^2_{t}(\dot{B}^{\frac{1}{2}} )}^{\ell}+\|n\|_{\widetilde{L}^2_{t}(\dot{B}^{\frac{3}{2}} )}^{m}+\var \|n\|_{\widetilde{L}^2_{t}(\dot{B}^{\frac{5}{2}} )}^{h})\lesssim \mathcal{X}(t)^2.
\end{aligned}
\end{equation}
In addition,  by \eqref{bernstein}, it is easy to see that
\begin{equation}\nonumber
\begin{aligned}
\var\|\nabla n\|_{L^{\infty}_{t}(L^{\infty})}&\lesssim \|n\|_{\widetilde{L}^{\infty}_{t}(\dot{B}^{\frac{1}{2}})}^{\ell}+\|n\|_{\widetilde{L}^{\infty}_{t}(\dot{B}^{\frac{3}{2}})}^{m}+\var\|n\|_{\widetilde{L}^{\infty}_{t}(\dot{B}^{\frac{5}{2}})}^{h}\lesssim \mathcal{X}(t),
\end{aligned}
\end{equation}
and
\begin{equation}\nonumber
\begin{aligned}
\|u\|_{L^2_{t}(L^{\infty})}&\lesssim \|u\|_{\widetilde{L}^{2}_{t}(\dot{B}^{\frac{3}{2}})}\lesssim \mathcal{X}(t).
\end{aligned}
\end{equation}
In view of \eqref{d2}, \eqref{d20}, \eqref{commutator} and  \eqref{F1}, it follows that
\begin{equation}\nonumber
\begin{aligned}
&\var\sum_{j\in\mathbb{Z}}2^{(\frac{d}{2}+1)j}\|(\mathcal{R}_{1,j},\cR_{2,j}, \cR_{3,j})\|_{L^1_{t}(L^2)}\lesssim\|(n,\var u)\|_{\widetilde{L}^2_{t}(\dot{B}^{\frac{5}{2}} )}^2.
\end{aligned}
\end{equation}
Finally, by employing \eqref{uv2} and $\var\leq 1$, we have
\begin{equation}\nonumber
\begin{aligned}
\var^2\|u\times H\|_{\widetilde{L}^2_{t}(\dot{B}^{\frac{5}{2}} )}^{h}
&\lesssim \|u\|_{\widetilde{L}^2_{t}(\dot{B}^{\frac{3}{2}} )}\var \|H\|_{\widetilde{L}^{\infty}_{t}(\dot{B}^{\frac{5}{2}} )}+\var\|u\|_{\widetilde{L}^2_{t}(\dot{B}^{\frac{5}{2}} )} \|H\|_{\widetilde{L}^{\infty}_{t}(\dot{B}^{\frac{3}{2}} )}.
\end{aligned}
\end{equation}
Likewise, one can use \eqref{bernstein} again and deduce that 
\begin{equation}\nonumber
\begin{aligned}
\var \|H\|_{\widetilde{L}^{\infty}_{t}(\dot{B}^{\frac{5}{2}} )}+\|H\|_{\widetilde{L}^{\infty}_{t}(\dot{B}^{\frac{3}{2}} )}&\lesssim \|H\|_{\widetilde{L}^{\infty}_{t}(\dot{B}^{\frac{1}{2}})}^{\ell}+\|H\|_{\widetilde{L}^{\infty}_{t}(\dot{B}^{\frac{3}{2}})}^{m}+\var\|n\|_{\widetilde{L}^{\infty}_{t}(\dot{B}^{\frac{5}{2}})}^{h},
\end{aligned}
\end{equation}
which yields
\begin{equation}\nonumber
\begin{aligned}
\var^2\|u\times H\|_{\widetilde{L}^2_{t}(\dot{B}^{\frac{5}{2}} )}^{h}
&\lesssim \mathcal{X}(t)^2.
\end{aligned}
\end{equation}
Combining \eqref{fgddgg} and the above estimates  gives rise to \eqref{high}. Hence,  the proof of Lemma \ref{lemmahigh} is finished.
\end{proof}

\noindent
\textbf{Proof of Theorem \ref{theorem1}}. In what follows, we give the proof of Theorem \ref{theorem1}. First, we recall a local existence of classical solutions to the Cauchy problem \eqref{EMvar}-\eqref{EMdvar} in the framework of Besov space, which has been shown by prior works \cite{XuEM,XuEMtwo}. 
\begin{proposition} \label{proplocal}
Assume that the initial datum $(\rho_0, u_0,E_0, B_0)$ satisfies $\inf_{x\in \mathbb{R}^3}\rho_0(x)>0$ and $(\rho_0-\bar{\rho}, u_0,E_0, B_0-\bar{B})\in B^{\frac{5}{2}}$. Then, for any fixed $0<\var\leq1$, there exists a maximal time $T_0>0$ such that the Cauchy problem \eqref{EMvar}-\eqref{EMdvar} has a unique classical solution $(\rho,u,E,B)$ satisfying
\begin{equation}\label{localprop}
\begin{aligned}
\inf_{(t,x)\in [0,T_0)\times\mathbb{R}^3}\rho(t,x)>0,\quad (\rho-\bar{\rho},u,E,B-\bar{B})\in \mathcal{C}([0,T_0);B^{\frac{5}{2}})\cap \mathcal{C}^1([0,T_0);B^{\frac{3}{2}}),
\end{aligned}
\end{equation}
where the inhomogeneous Besov space $B^{s}(s>0)$ is defined by the subset of $\mathcal{S}'$ endowed with the norm
\begin{align}
\|\cdot \|_{B^{s}}:=\|\cdot \|_{L^2}+\|\cdot \|_{\dot{B}^{s}}.\nonumber
\end{align}
\end{proposition}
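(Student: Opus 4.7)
The plan is to treat \eqref{EMvar} as a quasilinear symmetrizable hyperbolic system with smooth zeroth-order (semilinear) corrections coming from the Lorentz force, the damping and the Maxwell coupling, and to run the now-standard critical-regularity local existence theory in $B^{5/2}$. Since $\varepsilon>0$ is fixed here, uniformity in $\varepsilon$ plays no role and only the classical machinery (Friedrichs symmetrization, Littlewood--Paley localization, Bony's paraproduct decomposition and Moser-type product/composition estimates) is needed. The choice of $B^{5/2}$ is critical: one has $B^{5/2}(\mathbb{R}^3) \hookrightarrow \mathcal{C}^1\cap W^{1,\infty}$, which is the minimal regularity making $(\rho,u,E,B)$ a classical solution, and it is algebra-like enough to close the nonlinear estimates.

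First, I would rewrite the system in the enthalpy variable $n=h(\rho)-h(\bar\rho)$ and the perturbation $H=B-\bar B$ (as in \eqref{EM1}), so that the principal part becomes
\begin{equation*}
A_0(n)\partial_t\begin{pmatrix}n\\ u\end{pmatrix}+\sum_{j=1}^3 A_j(n,u)\partial_j\begin{pmatrix}n\\ u\end{pmatrix}=\text{lower order},\qquad \varepsilon\partial_t\begin{pmatrix}E\\ H\end{pmatrix}+\begin{pmatrix}-\nabla\times H\\ \nabla\times E\end{pmatrix}=\text{lower order},
\end{equation*}
with $A_0,A_j$ symmetric and $A_0$ positive definite provided $\|n\|_{L^\infty}$ is small enough (which is ensured by $B^{5/2}\hookrightarrow L^\infty$ and the positivity $\inf\rho_0>0$). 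Next, I would iterate: set $V^0\equiv (n_0,u_0/\varepsilon,E_0,H_0)$ constant in time, and define $V^{k+1}$ as the solution of the linear symmetric hyperbolic system obtained by freezing the quasilinear coefficients at $V^k$, solvable by the classical Friedrichs scheme in any $B^s$. Applying $\dot{\Delta}_j$ to this linear system and summing with the weight $2^{5j/2}$, together with the commutator estimates of Appendix \ref{app1} for terms of the form $[u\cdot\nabla,\dot\Delta_j]n$ and $[G(n),\dot\Delta_j]\mathrm{div}\,u$, yields an inequality of the type
\begin{equation*}
\|V^{k+1}\|_{L^\infty_T(B^{5/2})}\le C\|V_0\|_{B^{5/2}}+C(\varepsilon)\int_0^T \bigl(1+\|V^k\|_{B^{5/2}}\bigr)\|V^{k+1}\|_{B^{5/2}}\,d\tau,
\end{equation*}
from which a Grönwall argument produces a time $T_0=T_0(\varepsilon,\|V_0\|_{B^{5/2}})>0$ such that $(V^k)$ lies in a bounded ball of $\mathcal{C}([0,T_0];B^{5/2})$.

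Convergence of $(V^k)$ is then proved in the weaker norm $\mathcal{C}([0,T_0];B^{3/2})$ by a standard contraction argument on $V^{k+1}-V^k$ (working one derivative below avoids the loss-of-derivatives issue inherent in quasilinear systems), and the limit $V=(n,u,E,H)$ is upgraded to $\mathcal{C}([0,T_0];B^{5/2})$ via Fatou's lemma and interpolation, with $\mathcal{C}^1([0,T_0];B^{3/2})$ time regularity read directly from the equations. Uniqueness follows from the same contraction estimate applied to the difference of two solutions. Finally, the positivity $\inf_{[0,T_0)\times\mathbb{R}^3}\rho>0$ comes from $\inf\rho_0>0$, the continuity $\rho\in\mathcal{C}([0,T_0);L^\infty)$ and the identity $\rho=\rho(n)$ with $\rho(0)=\bar\rho$, after possibly shrinking $T_0$; the constraints $\mathrm{div}\,E=\bar\rho-\rho$ and $\mathrm{div}\,H=0$ are propagated by the structure of Maxwell's equations, so the local solution of the evolutionary system automatically satisfies \eqref{EM2} if the initial data do.

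The main technical obstacle is the commutator and paraproduct analysis at the critical index $s=5/2=d/2+1$: one must carefully bound $\sum_j 2^{5j/2}\|[u,\dot\Delta_j]\nabla n\|_{L^2}$ and analogous terms without losing a derivative, using Bony's decomposition together with $B^{3/2}\hookrightarrow L^\infty$. This is by now a routine step, fully carried out in the cited works \cite{XuEM,XuEMtwo}, which is why the present paper quotes the result rather than reproving it.
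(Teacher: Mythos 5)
The paper does not actually prove Proposition \ref{proplocal}: it recalls it as a known result from \cite{XuEM,XuEMtwo}, and your sketch (symmetrization in the enthalpy variable, Friedrichs iteration with uniform $B^{5/2}$ bounds, contraction one derivative lower in $B^{3/2}$, propagation of the constraints and of $\inf\rho>0$) is a faithful outline of exactly the argument carried out in those references — and is consistent with the paper's own Step 4 uniqueness estimate, which works at the $\dot{B}^{\frac{3}{2}}$ level. So your proposal is correct and matches the intended (cited) proof.
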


Owing to Proposition \ref{proplocal}, one can  construct a sequence of approximate solutions and show its convergence to the global solution with required regularities. For clarity, we divide the procedure into several steps.

\begin{itemize}
\item\emph{Step 1: Construction of the approximate sequence}
\end{itemize}

Set $(n_{0},u_{0},E_{0},H_{0})$  with $n_{0}=h(\rho_{0})-h(\bar{\rho})$ and $H_{0}=B_{0}-\bar{B}$. Assume that  $(\rho_{0}-\bar{\rho},u_{0},E_{0},B_{0}-\bar{B})$ satisfies \eqref{a1}. For any $k=1,2,...,$ we regularize $(n_{0},u_{0},E_{0},H_{0})$ as follows
\begin{equation}
\begin{aligned}
(n^{k}_{0},u^{k}_{0},E^{k}_{0},H^{k}_{0}):=\sum_{|j'|\leq k} \dot{\Delta}_{j'}(n_{0},u_{0},E_{0},H_{0}).\nonumber
\end{aligned}
\end{equation}
Then, Bernstein's lemma implies that $(n^{k}_{0},u^{k}_{0},E^{k}_{0},H^{k}_{0})\in B^{\frac{5}{2}}$. Furthermore, for suitable large $k$,
$(n^{k}_{0},u^{k}_{0},E^{k}_{0},H^{k}_{0})$ has the uniform bound
\begin{equation}\label{appdata}
\begin{aligned}
&\|(n^{k}_{0},u^{k}_{0},E^{k}_{0},H^{k}_{0})\|_{ \dot{B}^{\frac{1}{2}}}^{\ell}+\|(n^{k}_{0},u^{k}_{0},E^{k}_{0},H^{k}_{0})\|_{\dot{B}^{\frac{3}{2}}}^{m}+\var\|(n^{k}_{0},u^{k}_{0},E^{k}_{0},H^{k}_{0})\|_{\dot{B}^{\frac{5}{2}}}^{h}\leq C_1 \mathcal{E}_0^{\var},
\end{aligned}
\end{equation}
where $C_1$ is a constant independent of $\var$ and $k$, and $\mathcal{E}_0^{\var}$ is given by \eqref{E0}. It suffices to show the above estimate for $n_{0}^{k}$. Indeed,  choosing $k$ large enough such $k\geq J_{\var}+1$, it follows from  Lemma \ref{lemma63} and \eqref{bernstein} that
\begin{equation}\nonumber
\begin{aligned}
\var \|n^{k}_{0}\|_{\dot{B}^{\frac{5}{2}}}^{h}&\lesssim \var \|n_0\|_{\dot{B}^{\frac{5}{2}}}^{h}\lesssim \var\|\rho_0-\bar{\rho}\|_{ \dot{B}^{\frac{5}{2}}}\lesssim \|\rho_0-\bar{\rho}\|_{ \dot{B}^{\frac{1}{2}}}^{\ell}+\|\rho_0-\bar{\rho}\|_{ \dot{B}^{\frac{3}{2}}}^{m}+\var\|\rho_0-\bar{\rho}\|_{ \dot{B}^{\frac{5}{2}}}^{h}.
\end{aligned}
\end{equation}
Similarly,  
\begin{equation}\nonumber
\begin{aligned}
\|n^{k}_{0}\|_{\dot{B}^{\frac{1}{2}}}^{\ell}+\|n^{k}_{0}\|_{\dot{B}^{\frac{3}{2}}}^{m}\lesssim \|n_{0}\|_{\dot{B}^{\frac{1}{2}}\cap \dot{B}^{\frac{3}{2}}}\lesssim \|\rho_0-\bar{\rho}\|_{\dot{B}^{\frac{1}{2}}\cap \dot{B}^{\frac{3}{2}}}\lesssim \|\rho_0-\bar{\rho}\|_{ \dot{B}^{\frac{1}{2}}}^{\ell}+\|\rho_0-\bar{\rho}\|_{ \dot{B}^{\frac{3}{2}}}^{m}+\var\|\rho_0-\bar{\rho}\|_{ \dot{B}^{\frac{5}{2}}}^{h}.
\end{aligned}
\end{equation}
On the other hand, we see  that $(n^{k}_{0},u^{k}_{0},E^{k}_{0},H^{k}_{0})$ converges to $(n_{0},u_{0},E_{0},H_{0})$ strongly as $k\rightarrow \infty$ in the topology associated with $\mathcal{E}_{0}^{\var}$. Actually, \eqref{a1} implies that $\|n_0\|_{\dot{B}^{\frac{1}{2}}}^{\ell}+\var \|n_0\|_{\dot{B}^{\frac{5}{2}}}^{h}<\infty$, so it is not difficult to check that, for $k\geq J_{\var}+1$,
\begin{equation}\nonumber
\begin{aligned}
&\|n^{k}_{0}-n_{0}\|_{\dot{B}^{\frac{1}{2}}}^{\ell}+\|n_0^{k}-n_0\|_{\dot{B}^{\frac{3}{2}}}^{m}+\var\|n_0^{k}-n_0\|_{\dot{B}^{\frac{5}{2}}}^{h}\\
&\quad\quad\quad \lesssim  \sum_{j<-k} 2^{\frac{1}{2}j}\|\dot{\Delta}_jn_{0}\|_{L^2}+\var\sum_{j\geq k} 2^{\frac{5}{2}j}\|\dot{\Delta}_jn_{0}\|_{L^2}\rightarrow 0.
\end{aligned}
\end{equation}
Therefore, according to Proposition \ref{proplocal}, there exists a maximal time $T_k>0$ such that the problem \eqref{EM1} supplemented with the
initial datum $(n^{k}_{0},\frac{1}{\var}u^{k}_{0},E^{k}_{0},H^{k}_{0})$, admits a unique classical solution $(n^{k},u^{k},E^{k},H^{k})$ with $\rho^{k}=\bar{\rho}+Kn^{k}+\Psi(n^k)$ and $B^{k}=H^{k}+\bar{B}$ satisfying \eqref{localprop}. 


\begin{itemize}
\item \emph{Step 2: The continuation argument}
\end{itemize}

Define 
\begin{equation}\label{T*}
\begin{aligned}
&T_{k}^{*}:=\sup\big{\{}t\in[0,T_{k}): ~\mathcal{X}^{k}(t)\leq  2C_{0}C_{1}\mathcal{E}_{0}^{\var}   \big{\}},
\end{aligned}
\end{equation}
where $\mathcal{X}^{k}(t)$ denotes the same functional as $\mathcal{X}(t)$ (see \eqref{Xt}) for $(n^{k},u^{k},E^{k},H^{k})$. Here $T_{k}^{*}$ is well-defined and fulfills $0<T_{k}^{*}\leq T_{k}$. We claim $T_{k}^{*}=T_{k}$. 
Let $\delta_0>0$ be given by Proposition \ref{propapriori}. Due to \eqref{appdata}, (\ref{T*}) and the embedding $\dot{B}^{\frac{3}{2}}\hookrightarrow L^{\infty}$, we choose a generic constant $C_{2}$ such that
\begin{align}
\|n^{k}\|_{L^{\infty}}\leq C_{2} \mathcal{X}^{k}(t)\leq 2C_0C_1C_2 \mathcal{E}_{0}^{\var}\leq \delta_0,\nonumber
\end{align}
provided that
$$
\mathcal{E}_{0}^{\var}\leq \alpha_{0}^{*}:=\frac{\delta_0}{2C_0C_1C_2}.
$$
Therefore, it follows from \eqref{appdata} and \eqref{uniformapriori} in Proposition \ref{propapriori} that
\begin{equation}\nonumber
\begin{aligned}
\mathcal{X}^{k}(t)\leq C_{0}\Big(C_1\mathcal{E}_{0}^{\var}+\mathcal{X}^{k}(t)^2+\mathcal{X}^{k}(t)^{3}\Big),\quad 0< t<T_{k}.
\end{aligned}
\end{equation}
Furthermore, we take $\alpha_{0}$ small enough such that
$$
\mathcal{E}_{0}^{\var}\leq \alpha_{0}:=\min\bigg\{\alpha_0^*,\frac{1}{2C_0C_{1}},\frac{1}{16 C_0^2 C_1}\bigg\}, 
$$
which leads to
\begin{equation}\label{X22}
\begin{aligned}
\mathcal{X}^{k}(t)&\leq C_{0}\Big(C_1\mathcal{E}_{0}^{\var}+2(2C_0C_1\mathcal{E}_0^{\var})^2\Big)\leq \frac{3}{2} C_{0}C_1  \mathcal{E}_{0}^{\var},\quad 0< t<T_{k}.
\end{aligned}
\end{equation}
Thus, the claim follows by using the standard continuity argument. 


Next, we show that $T^*_{k}=+\infty$.
For that end, we use a contradiction argument and assume that $T_{k}^{*}<\infty$. Since $(\rho^k,u^k,E^k,B^k)$ is the classical solution to \eqref{EMvar}, we have
\begin{equation}\nonumber
\begin{aligned}
&\int \Big(\frac{\var^2}{2}\rho^k |u^k|^2+\rho^k \int^{\rho^k}_{\bar{\rho}}\frac{P'(s)-P'(\bar{\rho})}{s^2}\,ds+\frac{1}{2}|E^{k}|^2+\frac{1}{2}|B^{k}-\bar{B}|^2\Big)\,dx+\int_{0}^{t}\int \rho^k |u^k|^2\,dx\\
&\quad=\int \Big(\frac{\var^2}{2}\rho_0^k |u_0^k|^2+\rho_0^k \int^{\rho_0^k}_{\bar{\rho}}\frac{P'(s)-P'(\bar{\rho})}{s^2}\,ds+\frac{1}{2}|E_0^{k}|^2+\frac{1}{2}|B_0^{k}-\bar{B}|^2\Big)\,dx.
\end{aligned}
\end{equation}
The above energy equality gives the $L^2$-norm estimate for $(n^k,u^k,E^k,H^k)$, which is independent of time but depends on $k$. Together with (\ref{X22}), we deduce that $(n^k,u^k,E^k,H^k)\in B^{\frac{5}{2}}$. Hence, let $(n^{k},u^{k},E^{k},H^{k})(t)$ be the new initial datum at some $t$ sufficiently close to $T_{k}^{*}$. 
Applying Proposition \ref{proplocal} once again implies that the existence interval can be extended from $[0,t]$ to $[0,t+\eta^{*}]$ with $t+\eta^{*}>T_{k}^{*}$, which contradicts the definition of $T_{k}^{*}$. Therefore, we conclude that $T_{k}^{*}=\infty$ and $(n^{k},u^{k},E^{k},H^{k})$ is the global-in-time solution to (\ref{EM1}).

\begin{itemize}

\item \emph{Step 3: Compactness and Convergence}
\end{itemize}
 
From the uniform estimate $\mathcal{X}^{k}(t)\lesssim \mathcal{E}_{0}^{\var}$ and (\ref{EM1}), one can deduce that  $(\partial_{t}n^{k},\partial_{t}u^{k},\partial_{t}E^{k}, \partial_{t}H^{k})$ is uniformly bounded. Note that $\dot{B}^{\frac{1}{2},\frac{5}{2}}$ is a Banach space (see Lemma \ref{lemma62}). Thus, by applying the Aubin-Lions lemma and the Cantor diagonal process, there exists a limit $(n,u,E,H)$ such that $(n^{k},u^{k},E^{k},H^{k})$ converges to $(n,u,E,H)$ strongly in $L^2_{loc}(\mathbb{R}_{+};H^{2}_{loc})$, as $k\rightarrow\infty$ (up to a subsequence). Furthermore, the limit $(n,u,E,H)$ solves  (\ref{EM1}) in the sense of distributions. Thanks to Fatou's property $\mathcal{X}(t)\lesssim \liminf\limits_{k\rightarrow\infty} \mathcal{X}^{k}(t)$, we conclude that $\mathcal{X}(t)\lesssim \mathcal{E}_{0}^{\var}$ for all $t>0$. Denote $\rho$ and $B$ by
\begin{align}
&\rho:=\bar{\rho}+K n+\Phi(n),\quad\quad B:=H+\bar{B},\nonumber
\end{align}
where $\Phi(n)$ is given by \eqref{GFPHI}. Consequently, one can show that  $(\rho,u,E,B)$ is the classical solution to the original system (\ref{EMvar})-\eqref{EMdvar} subject to $(\rho_{0},\frac{1}{\var}u_{0},E_{0},B_{0})$. By  standard product laws and composition estimates,  $(\rho,u,E,B)$ satisfies the energy inequality (\ref{r1}). In addition, following a similar argument as in  \cite[Page 196]{BHN}, one has $(\rho-\bar{\rho},u,E,B-\bar{B})\in \mathcal{C}(\mathbb{R}_{+};\dot{B}^{\frac{1}{2},\frac{5}{2}})$. 

\begin{itemize}
\item \emph{Step 4: Uniqueness}
\end{itemize}

For any time $T>0$,
let $(\rho_{1},u_{1},E_{1},H_{1})$ and $(\rho_{2},u_{2},E_{2},H_{2})$ be two solutions of the system \eqref{EMvar} with the same initial data, such that $(\rho_{i}-\bar{\rho},u_{i},E_{i},B_{i}-\bar{B})\in L^{\infty}(0,T;\dot{B}^{\frac{1}{2}}\cap \dot{B}^{\frac{5}{2}})($i=1,2$)$ and $\rho_{-}\leq \rho_{i}\leq \rho_{+}$ for $0<\rho_{-}\leq\rho_{+}$. Without loss of generality, we set $\var=1$. Let
$$
(\delta \rho,\delta u,\delta E,\delta B)=(\rho_{1}-\rho_{2},u_{1}-u_{2},E_{1}-E_{2},B_{1}-B_{2}).
$$
The unknown $(\delta \rho,\delta u,\delta E,\delta B)$ solves the error system
\begin{equation}\label{EMdelta}
\left\{
\begin{aligned}
&\partial_{t}\delta \rho+u_{1}\cdot \nabla \delta \rho+\rho_{1}\div \delta u=\delta F^{1},\\
&\partial_{t}\delta u+u_{1}\cdot \nabla \delta u+M(\rho_{1})\nabla\delta\rho +\delta u+\delta E+ \delta u\times \bar{B}= \delta F^{2},\\
&\partial_{t}\delta E-\nabla\times \delta B-\bar{\rho}\delta u=\delta F^{3},\\
&\partial_{t}\delta B+\nabla\times \delta E=0,\\
&\div \delta E=-\delta \rho,\quad\quad \div \delta B=0,
\end{aligned}
\right.
\end{equation}
with $M(s)=P'(s)/s$ and
\begin{equation}\nonumber
\begin{aligned}
&\delta F^{1}=-\delta u\cdot\nabla\rho_{2}-\delta \rho \div u_{2},\\
&\delta F^{2}=-\delta u\cdot\nabla u_{2}-(M(\rho_{1})-M(\rho_{2}))\nabla \rho_{2}-u_{1}\times \delta B_{2}-\delta u\times (B_{2}-\bar{B}),\\
&\delta F^{3}=\delta \rho u_{1}+(\rho_{2}-\bar{\rho})\delta u.
\end{aligned}
\end{equation}
Applying $\dot{\Delta}_{j}$ to \eqref{EMdelta} leads to
\begin{equation}\label{EMdeltaj}
\left\{
\begin{aligned}
&\partial_{t}\delta \rho_{j}+u_{1}\cdot \nabla \delta \rho_{j}+\rho_{1}\div \delta u_{j}=\delta F^{1}_{j}+\delta R_{1,j}+\delta R_{2,j},\\
&\partial_{t}\delta u_{j}+u_{1}\cdot \nabla \delta u_{j}+M(\rho_{1})\nabla\delta\rho_{j} +\delta u_{j}+\delta E_{j}+ \delta u_{j}\times \bar{B}= \delta F^{2}_{j}+\delta R_{3,j}+\delta R_{4,j},\\
&\partial_{t}\delta E_{j}-\nabla\times \delta B_{j}-\bar{\rho}\delta u_{j}=\delta F^{3}_{j},\\
&\partial_{t}\delta B_{j}+\nabla\times \delta E_{j}=0,\\
&\div \delta E_{j}=-\delta \rho_{j},\quad\quad \div \delta B_{j}=0,
\end{aligned}
\right.
\end{equation}
where commutator terms are defined by
\[  \delta R_{1,j}=[u_{1},\dot{\Delta}_{j}]\nabla\delta\rho, \quad
\delta R_{2,j}=[\rho_{1},\dot{\Delta}_{j}]\nabla\delta u, \quad
\delta R_{3,j}=[u_{1},\dot{\Delta}_{j}]\nabla\delta u\quad\mbox{and}\quad
\delta R_{4,j}=[M(\rho_{1}),\dot{\Delta}_{j}]\nabla\delta\rho.  \]

Direct computations on \eqref{EMdeltaj} give
\begin{equation}\nonumber
\begin{aligned}
&\frac{1}{2}\frac{d}{dt}\int \Big( \frac{1}{\rho_{1}}|\delta \rho_{j}|^2+\frac{1}{M(\rho_{1})}|\delta u_{j}|^2+\frac{1}{P'(\bar{\rho})}|E_{j}|^2+\frac{1}{P'(\bar{\rho})}|B_{j}|^2)\,dx+\int \frac{1}{M(\rho_{1})} |u_{j}|^2 dx\\
&~\leq \frac{1}{2}\Big(\|\partial_{t} \frac{1}{\rho_{1}}\|_{L^{\infty}}+\|\nabla \frac{u_{1}}{\rho_{1}}\|_{L^{\infty}} \Big)\|\delta \rho_{j}\|_{L^2}^2+\frac{1}{2}\Big(\|\partial_{t} \frac{1}{M(\rho_{1})}\|_{L^{\infty}}+\|\nabla \frac{u_{1}}{M(\rho_{1})}\|_{L^{\infty}}\Big)\|\delta u_{j}\|_{L^2}^2\\
&\quad~+ \|\frac{1}{M(\rho_{1})}-\frac{1}{M(\bar{\rho})}\|_{L^{\infty}}\|u_{j}\|_{L^2}\|E_{j}\|_{L^2}+\|\frac{1}{\rho_{1}}\|_{L^{\infty}}\|(\delta F^{1}_{j},\delta R_{1,j},\delta R_{2,j})\|_{L^2}\|\delta\rho_{j}\|_{L^2}\\
&\quad~+\|\frac{1}{M(\rho_{1})}\|_{L^{\infty}}\|(\delta F^{2}_{j},\delta R_{3,j},\delta R_{4,j})\|_{L^2}\|\delta u_{j}\|_{L^2}+\frac{1}{P'(\bar{\rho})}\|\delta F^{3}_{j}\|_{L^2}\|\delta E_{j}\|_{L^2},
\end{aligned}
\end{equation}
which leads to
\begin{equation}\label{1sdg}
\begin{aligned}
&\quad \|(\delta \rho,\delta u,\delta E,\delta B)\|_{\dot{B}^{\frac{3}{2}} }\\
&\lesssim \int_{0}^{T}(1+\|(\partial_{t}\rho_{1},\nabla \rho_{1}, \nabla u_{1})\|_{L^{\infty}})(\|(\delta \rho,\delta u)\|_{\dot{B}^{\frac{3}{2}} }dt\\
&\quad+\int_{0}^{T}\Big(\|(\delta F_{1},\delta F_{2},\delta F_{2})\|_{\dot{B}^{\frac{3}{2}}}+\sum_{j\in\mathbb{Z}}2^{\frac{d}{2}j}\|(\delta R_{1,j},\delta R_{2,j},\delta R_{3,j},\delta R_{4,j})\|_{L^2} \Big)d\tau.
\end{aligned}
\end{equation}
Using the product law \eqref{uv2} and the composition estimates \eqref{F1} and \eqref{F3}, we arrive at
\begin{equation}\label{1sdg1}
\begin{aligned}
\|(\delta F_{1},\delta F_{2},\delta F_{2})\|_{\dot{B}^{\frac{3}{2}}}\lesssim (\|\nabla(\rho_{2},u_{2})\|_{\dot{B}^{\frac{3}{2}}}+\|(\rho_{2}-\bar{\rho},u_{1},B_{2}-\bar{B})\|_{\dot{B}^{\frac{3}{2}}})\|(\delta \rho,\delta u)\|_{\dot{B}^{\frac{3}{2}}}.
\end{aligned}
\end{equation}
It follows from the composition estimate \eqref{commutator} that
\begin{equation}\label{1sdg2}
\begin{aligned}
\sum_{j\in\mathbb{Z}}2^{\frac{d}{2}j}\|(\delta R_{1,j},\delta R_{2,j},\delta R_{3,j},\delta R_{4,j})\|_{L^2}\lesssim \|\nabla(\rho_{1},u_{2})\|_{\dot{B}^{\frac{3}{2}}}\|(\delta\rho,\delta  u)\|_{\dot{B}^{\frac{3}{2}}}.
\end{aligned}
\end{equation}
Inserting \eqref{1sdg1}-\eqref{1sdg2} into \eqref{1sdg} and then taking advantage of Gr\"onwall's inequality leads to $(\rho_{1},u_{1},E_{1},H_{1})=(\rho_{2},u_{2},E_{2},H_{2})$ for $(x,t)\in\mathbb{R}^{d}\times [0,T]$.
Hence, the proof of the uniqueness of Theorem \ref{theorem1} is finished.

\section{Strong relaxation limit for the compressible Euler-Maxwell system}\label{sectionrelaxation}



In this section, we prove 
Theorem \ref{theorem4}. As a preliminary result, we would like to give the global well-posedness for the following drift-diffusion system (\ref{DD}) first 
\begin{equation*}
\left\{
\begin{aligned}
&\partial_{t}\rho^{*}-\Delta P(\rho^{*})-\div (\rho^{*} \nabla \phi^{*})=0,\\
&\Delta \phi^{*}=\bar{\rho}-\rho^{*}.
\end{aligned}
\right.
\end{equation*}

\begin{theorem}\label{theorem2}
There exists a generic constant $\alpha_{1}>0$  such that if
\begin{equation}\label{a2}
\begin{aligned}
&\|\rho^{*}_{0}-\bar{\rho}\|_{ \dot{B}^{\frac{1}{2},\frac{3}{2}} }\leq \alpha_{1},
\end{aligned}
\end{equation}
then the Cauchy problem \eqref{DD} has a unique global solution $\rho^{*}$ fulfilling $\rho^{*}-\bar{\rho}\in \mathcal{C}(\mathbb{R}^{+}; \dot{B}^{\frac{1}{2},\frac{3}{2}} )$ and
\begin{equation}\label{r2}
\begin{aligned}
\|\rho^{*}-\bar{\rho}\|_{\widetilde{L}^{\infty}_{t}(\dot{B}^{\frac{1}{2},\frac{3}{2}})}+\|\rho^{*}-\bar{\rho}\|_{\widetilde{L}^2_{t}( \dot{B}^{\frac{1}{2},\frac{5}{2}} )}
\leq C\|\rho^{*}_{0}-\bar{\rho}\|_{ \dot{B}^{\frac{1}{2},\frac{3}{2}} }.
\end{aligned}
\end{equation}
\end{theorem}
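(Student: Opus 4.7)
The plan is to reduce \eqref{DD} to a single damped-parabolic equation for $a^* := \rho^* - \bar\rho$. Using the Poisson identity $\Delta\phi^* = -a^*$ and expanding $\Delta P(\rho^*) = P'(\bar\rho)\Delta a^* + \div\bigl[(P'(\rho^*)-P'(\bar\rho))\nabla a^*\bigr]$, one obtains
\begin{equation*}
\partial_t a^* - P'(\bar\rho)\Delta a^* + \bar\rho a^* = \div\bigl[(P'(\rho^*) - P'(\bar\rho))\nabla a^*\bigr] + \div\bigl(a^*\nabla(-\Delta)^{-1}a^*\bigr).
\end{equation*}
The linear symbol $P'(\bar\rho)|\xi|^2 + \bar\rho$ combines parabolic smoothing at high frequencies with pure exponential damping at low frequencies, which dictates precisely the hybrid regularity $\dot B^{1/2,3/2}$: for $|\xi|\gtrsim 1$ the Laplacian delivers a one-derivative gain in $L^2_t$, while for $|\xi|\lesssim 1$ the damping $\bar\rho$ yields only time decay and no smoothing, as visible in the target norm \eqref{r2}.

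I would mirror the scheme used for Theorem \ref{theorem1}: construct approximate solutions by Friedrichs truncation, derive a uniform a priori bound on
\begin{equation*}
\mathcal{X}^*(t) := \|a^*\|_{\widetilde L^\infty_t(\dot B^{1/2,3/2})} + \|a^*\|_{\widetilde L^2_t(\dot B^{1/2,5/2})},
\end{equation*}
and pass to the limit via Aubin--Lions. The linear estimate follows from applying $\dot\Delta_j$ and running an $L^2$ energy argument in which the dissipation controls $c\bigl(2^{2j} + 1\bigr)\|\dot\Delta_j a^*\|_{L^2}^2$; after Duhamel and summation with the weights $2^{j/2}$ for $j\le 0$ and $2^{3j/2}$ for $j\ge 0$, one recovers
\begin{equation*}
\mathcal{X}^*(t) \lesssim \|a^*_0\|_{\dot B^{1/2,3/2}} + \|\mathcal{N}\|_{\widetilde L^1_t(\dot B^{1/2,5/2})},
\end{equation*}
where $\mathcal{N}$ stands for the two nonlinear contributions.

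The bulk of the work is to bound $\mathcal{N}$ by $\mathcal{X}^*(t)^2$. The quasilinear term $\div\bigl[(P'(\rho^*)-P'(\bar\rho))\nabla a^*\bigr]$ is handled by the product law \eqref{uv2} and the composition estimate \eqref{F1}, as soon as $\|a^*\|_{L^\infty_{t,x}}\lesssim \|a^*\|_{\widetilde L^\infty_t(\dot B^{3/2})}$ is kept small, which is ensured by the smallness condition \eqref{a2}. The delicate contribution is the nonlocal term $\div\bigl(a^*\nabla(-\Delta)^{-1}a^*\bigr)$: since $\nabla(-\Delta)^{-1}$ is a Fourier multiplier of order $-1$, controlling it in $\dot B^{5/2}$ costs a full derivative, and closing the product law forces one to use the low-frequency norm $\|a^*\|_{\widetilde L^2_t(\dot B^{1/2})}$. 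This is exactly why the hybrid space must carry the nonhomogeneous low-frequency index $1/2$, and this coupling between low and high frequencies is the main obstacle of the argument.

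Once the nonlinear bound $\mathcal{X}^*(t)\le C\bigl(\|a^*_0\|_{\dot B^{1/2,3/2}} + \mathcal{X}^*(t)^2\bigr)$ is established, the smallness \eqref{a2} together with a standard continuation argument produces the global estimate \eqref{r2}. Uniqueness is proved via a $\dot B^{1/2}$ energy estimate on the difference of two solutions combined with Grönwall's inequality, and the time continuity $\rho^*-\bar\rho\in\mathcal{C}(\mathbb{R}^+;\dot B^{1/2,3/2})$ follows verbatim as in the final step of the proof of Theorem \ref{theorem1}.
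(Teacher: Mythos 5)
Your proposal is correct and matches the paper's intended argument: the paper itself only sketches this proof (reducing \eqref{DD} to the damped-parabolic equation \eqref{DD11}, invoking maximal regularity in $\dot{B}^{\frac{1}{2},\frac{3}{2}}$ and a standard fixed-point/bootstrap, and explaining that the low-frequency index $\tfrac12$ is forced by the nonlocal term $\div(a^*\nabla(-\Delta)^{-1}a^*)$ while the high-frequency index $\tfrac32$ is needed for the composition $P'(\rho^*)-P'(\bar\rho)$ via $\dot{B}^{\frac{3}{2}}\hookrightarrow L^\infty$), and you identify exactly these points. The only cosmetic difference is that you close existence by Friedrichs approximation and compactness plus a separate uniqueness estimate rather than by a contraction argument, which changes nothing in the underlying estimates.
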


The proof of Theorem \ref{theorem2} can be given by the maximal regularity estimate and the standard fixed point argument (see \cite{c2,Lemarié23}). Here, we feel free to omit the similar details for brevity. Let us mention that the regularity of $\rho^*$ in \eqref{r2} is the exactly same as that of $\rho^{\var}$ in the low-frequency regime $j\leq 0$ and in the medium-frequency regime $-1\leq j\leq J_{\var}$, respectively. We give a little explanation on the choice of  $\dot{B}^{\frac{1}{2},\frac{3}{2}}$ for the initial datum $\rho^{*}_{0}$. Indeed, one can rewrite \eqref{DD} as
\begin{align}
&\partial_{t}\rho^{*}-P'(\bar{\rho})\Delta \rho^{*}+\bar{\rho}\rho^{*}=\div ( (P'(\rho^{*})-P'(\bar{\rho}))\nabla\rho^{*})+\div ((\rho^{*}-\bar{\rho})\nabla(-\Delta)^{-1}\rho^{*}).\label{DD11}
\end{align}
Clearly, there are two dissipation effects in \eqref{DD11}: the heat diffusion and damping. In order to handle the second lower-order term, we need the $\dot{B}^{\frac{1}{2}}$-regularity for low frequencies, and to control the composite function  $P'(\rho^{*})-P'(\bar{\rho})$, the $\dot{B}^{\frac{3}{2}}$-regularity is required for high frequencies owing to the embedding $\dot{B}^{\frac{3}{2}}\hookrightarrow L^{\infty}$. 

\vspace{3mm}
Let $(n^{\var},u^{\var},E^{\var},B^{\var})$, with $n^{\var}=h(\rho^{\var})-h(\bar{\rho})$ and $H^{\var}=B^{\var}-\bar{B}$, be the global solution to \eqref{EMvar}-\eqref{EMdvar} in Theorem \ref{theorem1}. As mentioned in Subsection \ref{subsectionmain}, it is convenient to introduce the effective velocity
$$
z^{\var}:=u^{\var}+ \nabla n^{\var}+ E^{\var}+ \var u^{\var} \times \bar{B},
$$
which plays a key role in justifying the strong relaxation limit from \eqref{EMvar} to \eqref{DD}. Indeed, observe that 
$$
\partial_{t}u^{\var}=- \frac{1}{\var^2}z^{\var}-u^{\var}\cdot \nabla u^{\var}-\frac{1}{\var}u^{\var}\times H^{\var},
$$
in which one can deduce that  $z^{\var}$ satisfies a damping equation with high-order terms
\begin{align}
 \partial_{t} z^{\var}+\frac{1}{\var^2 }z^{\var}+\frac{1}{\var}z^{\var}\times\bar{B}=\nabla\partial_{t}n^{\var}+ \partial_{t}E^{\var}+ F^{\var},\label{zeq}
\end{align}
with
$$
F^{\var}=- u^{\var}\cdot\nabla u^{\var}-\frac{1}{\var}u^{\var}\times H^{\var}-\var (u^{\var}\cdot\nabla u^{\var})\times\bar{B}- (u^{\var}\times H^{\var})\times\bar{B}.
$$
The equation \eqref{zeq} indicates that $z^{\var}$ possesses a better property compared with the velocity $u^{\var}$. We establish the decay estimates of $z^{\var}$ as follows.

\subsection{Regularity estimates of the effective velocity}\label{subsectiondampmode}

\begin{proposition}\label{propenhancez}
Under the assumptions of Theorem \ref{theorem1}, it holds that
\begin{equation}
\begin{aligned}
&\|z_{L}^{\var}\|_{L^1_{t}(\dot{B}^{\frac{1}{2},\frac{3}{2}})}+\|z^{\var}-z_{L}^{\var} \|_{\widetilde{L}^2_{t}(\dot{B}^{\frac{1}{2}} )}\leq C \var \mathcal{E}_{0}^{\var},\label{enhancez}
\end{aligned}
\end{equation}
where initial layer correction $z_{L}^{\var}:=e^{-\frac{t}{\var^2}t}z^{\var}_0$ is the solution to
\begin{align}
&\partial_{t}z^{\var}_{L}+\frac{1}{\var^2}z^{\var}_{L}=0,\quad\quad z^{\var}_{L}|_{t=0}=z^{\var}_0:=\frac{1}{\var}u_0+\nabla h(\rho_0)+E_0+u_0\times \bar{B},
\end{align}
and $C>0$ is a constant independent of $\var$.
\end{proposition}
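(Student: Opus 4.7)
The plan is to exploit the damping equation \eqref{zeq} satisfied by $z^{\var}$. Writing $z^{\var} = z_L^{\var} + \tilde z^{\var}$, the initial layer $z_L^{\var}$ absorbs the lack of well-preparedness of the data while the remainder $\tilde z^{\var}$, starting from zero, is treated through the damping/hypocoercive structure.

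I would first estimate $z_L^{\var}$ directly. Since $z_L^{\var}(t) = e^{-t/\var^2} z_0^{\var}$, an elementary integration gives $\|z_L^{\var}\|_{L^1_t(\dot B^{1/2,3/2})} \lesssim \var^2 \|z_0^{\var}\|_{\dot B^{1/2,3/2}}$, so the first part of \eqref{enhancez} reduces to showing $\var \|z_0^{\var}\|_{\dot B^{1/2,3/2}} \lesssim \mathcal{E}_0^{\var}$. Expanding $\var z_0^{\var} = u_0 + \var \nabla h(\rho_0) + \var E_0 + \var u_0 \times \bar B$ and applying the three-regime Bernstein inequalities \eqref{bernstein}, each piece redistributes into the $\ell$/$m$/$h$-norms of $(\rho_0-\bar\rho, u_0, E_0, B_0-\bar B)$ making up $\mathcal{E}_0^{\var}$; for the composite $h(\rho_0)$ one additionally invokes the composition lemma to pass from $h(\rho_0)$ to $\rho_0-\bar\rho$ under the smallness \eqref{a1}.

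For $\tilde z^{\var}$, which satisfies
\begin{equation*}
\partial_t \tilde z^{\var} + \tfrac{1}{\var^2}\tilde z^{\var} + \tfrac{1}{\var}\tilde z^{\var}\times\bar B = -\tfrac{1}{\var}z_L^{\var}\times\bar B + \nabla\partial_t n^{\var} + \partial_t E^{\var} + F^{\var},
\end{equation*}
I would localize in frequency by $\dot\Delta_j$, take the $L^2$-inner product with $\tilde z^{\var}_j$, and use the algebraic identity $(\tilde z^{\var}_j\times\bar B)\cdot \tilde z^{\var}_j \equiv 0$ to eliminate the skew-symmetric Lorentz term. Duhamel's formula combined with Young's convolution inequality against the kernel $e^{-t/\var^2}$ (of $L^1$-norm $\var^2$) then yields the key bound $\|\tilde z^{\var}\|_{\widetilde L^2_t(\dot B^{1/2})} \lesssim \var^2 \|\mathrm{RHS}\|_{\widetilde L^2_t(\dot B^{1/2})}$, reducing the proof to $\var\|\mathrm{RHS}\|_{\widetilde L^2_t(\dot B^{1/2})} \lesssim \mathcal{E}_0^{\var}$.

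The technical core, and the main obstacle, is controlling $\nabla\partial_t n^{\var}$ and $\partial_t E^{\var}$ after substituting from \eqref{EM1}. Replacing $\partial_t n^{\var}$ by $-P'(\bar\rho)\div u^{\var}$ plus cubic remainders produces a main contribution of size $\var\|u^{\var}\|_{\widetilde L^2_t(\dot B^{5/2})}$, which redistributes via \eqref{bernstein} into the various pieces of the dissipation $\mathcal{D}$ provided by Theorem \ref{theorem1}. The delicate point is $\partial_t E^{\var} = \tfrac{1}{\var}\nabla\times H^{\var} + \bar\rho u^{\var} + F(n^{\var})u^{\var}$: the apparently singular prefactor $1/\var$ is exactly absorbed by our $\var$ weight, leaving $\|H^{\var}\|_{\widetilde L^2_t(\dot B^{3/2})}$, which is uniformly bounded across all three frequency regimes precisely because the one-derivative regularity-loss bound on $H^{\var}$ in high frequencies is built into $\mathcal{D}$ (cf. Table \ref{table:1}). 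The nonlinear term $F^{\var}$, together with the layer source $-\tfrac{1}{\var}z_L^{\var}\times\bar B$ handled via $\|z_L^{\var}\|_{L^2_t(\dot B^{1/2})} \lesssim \var\|z_0^{\var}\|_{\dot B^{1/2}}$, are dispatched by the standard product laws and composition estimates of the appendix, contributing at most $(\mathcal{E}_0^{\var})^2 \lesssim \mathcal{E}_0^{\var}$ under \eqref{a1}.
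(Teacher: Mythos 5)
Your proposal is correct and follows essentially the same route as the paper: the direct $L^1_t$ integration of the exponential layer $z_L^{\var}$, the frequency-localized damping estimate for $\widetilde z^{\var}$ (your Duhamel--Young step is equivalent to the paper's energy-plus-absorption argument and yields the same $\var^2$ gain), and the substitution of $\eqref{EM1}_1$ and $\eqref{EM1}_3$ to control $\nabla\partial_t n^{\var}$ and $\partial_t E^{\var}$, with the $1/\var$ in front of $\nabla\times H^{\var}$ absorbed exactly as you describe. The only slip is calling the remainders $u\cdot\nabla n$ and $G(n)\div u$ ``cubic'' (they are quadratic), which does not affect the argument.
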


\begin{remark}
If we aim to establish the convergence rate of $z^{\var}$ in $\widetilde{L}^2_{t}(\dot{B}^{\frac{1}{2}})$ directly, then one has to require the well-prepared condition $\|u_0\|_{\dot{B}^{\frac{1}{2}}}=\mathcal{O}(\var)$. Indeed, we have
$$
\|z_{L}^{\var}\|_{\widetilde{L}^2_{t}(\dot{B}^{\frac{1}{2}})}\lesssim \|u_0\|_{\dot{B}^{\frac{1}{2}}}+\var\|(\rho_0-\bar{\rho},E_0)\|_{\dot{B}^{\frac{1}{2}}}.
$$
\end{remark}

\begin{proof}
We first deal with the initial layer correction  $z^{\var}_{L}=e^{-\frac{t}{\var^2}}z_{0}^{\var}$. According to the definition of $z_0^{\var}$, we have
\begin{equation}\label{47}
\begin{aligned}
\|z_{L}^{\var}\|_{L^1_{t}(\dot{B}^{\frac{1}{2},\frac{3}{2}})}&=\int_0^{t}e^{-\frac{\tau}{\var^2}} \,d\tau\, \|z_0^{\var}\|_{\dot{B}^{\frac{1}{2},\frac{3}{2}}}\\
&\leq \var^2 \Big(\Big\| \frac{1}{\var} u_0^{\var}\Big\|_{\dot{B}^{\frac{1}{2},\frac{3}{2}}}+\|\nabla h(\rho^{\var})\|_{\dot{B}^{\frac{1}{2},\frac{3}{2}}}+\|E_0\|_{\dot{B}^{\frac{1}{2},\frac{3}{2}}}+\|u_0^{\var}\times \bar{B}\|_{\dot{B}^{\frac{1}{2},\frac{3}{2}}} \Big)\lesssim \var \mathcal{E}_0^{\var}.
\end{aligned}
\end{equation}
Denote
$$
\widetilde{z}^{\var}:=z^{\var}-z_{L}^{\var},
$$
which solves
\begin{align}
 \partial_{t} \widetilde{z}^{\var}+\frac{1}{\var^2 }\widetilde{z}^{\var}+\frac{1}{\var}\widetilde{z}^{\var}\times\bar{B}=\frac{1}{\var}z_{L}^{\var}\times\bar{B}+\nabla\partial_{t}n^{\var}+ \partial_{t}E^{\var}+ F^{\var},\quad \widetilde{z}^{\var}|_{t=0}=0.\label{zj}
\end{align}
Applying $\dot{\Delta}_j$ to \eqref{zj}, taking the $L^2$ inner product of the resulting equation with $z_j$ and noticing that $(\widetilde{z}_j^{\var}\times\bar{B})\cdot \widetilde{z}_j=0$, yields 
\begin{equation}\nonumber
\begin{aligned}
&\frac{1}{2}\frac{d}{dt}\|\widetilde{z}^{\var}_j\|_{L^2}^2+\frac{1}{\var^2}\|\widetilde{z}^{\var}_j\|_{L^2}^2\\
&\leq (\var^{-1} \|(z_{L}^{\var})_{j}\times\bar{B}\|_{L^2}+\|\nabla\partial_{t}n_j^{\var}\|_{L^2}+ \|\partial_{t}E_j^{\var}\|_{L^2}+\|F_j^{\var}\|_{L^2})\|z^{\var}_j\|_{L^2}\\
&\leq \frac{1}{2\var^2}\|z^{\var}_j\|_{L^2}^2+2\var^2(\|\nabla\partial_{t}n_j^{\var}\|_{L^2}^2+ \|\partial_{t}E_j^{\var}\|_{L^2}^2+\|F_j^{\var}\|_{L^2}^2)+\var^{-1} \|(z_{L}^{\var})_{j}\times\bar{B}\|_{L^2}\|z^{\var}_j\|_{L^2},
\end{aligned}
\end{equation}
from which we infer that
\begin{equation}\nonumber
\begin{aligned}
&\|\widetilde{z}^{\var}_j\|_{L^{\infty}_{t}(L^2)}+\frac{1}{\var}\|\widetilde{z}^{\var}_j\|_{L^1_t(L^2)}\\
&\quad\lesssim \var\|\nabla\partial_{t}n_j^{\var}\|_{L^2_t(L^2)}+\var \|\partial_{t}E_j^{\var}\|_{L^2_t(L^2)}+\var \|F_j^{\var}\|_{L^2_t(L^2)}+\var^{-\frac{1}{2}} \|(z_{L}^{\var})_{j}\times\bar{B}\|_{L^1_{t}(L^2)}^{\frac{1}{2}}\|z^{\var}_j\|_{L^{\infty}_{t}(L^2)}^{\frac{1}{2}}.
\end{aligned}
\end{equation}
Therefore, summing the resulting inequality over $j\in\mathbb{Z}$ with the factor $2^{(\frac{d}{2}-1)j}$ after taking advantage of Young's inequality for the last term, we obtain
\begin{equation}\label{qwer}
\begin{aligned}
&\|\widetilde{z}^{\var}\|_{\widetilde{L}^{\infty}_{t}(\dot{B}^{\frac{1}{2}} )}+\frac{1}{\var}\|\widetilde{z}^{\var}\|_{\widetilde{L}^{2}(\dot{B}^{\frac{1}{2}} )}\\
&\quad\lesssim \var\|(\partial_{t}\nabla n^{\var},\partial_{t} E^{\var})\|_{\widetilde{L}^{2}(\dot{B}^{\frac{1}{2}} )}+\var\|F^{\var}\|_{\widetilde{L}^{2}(\dot{B}^{\frac{1}{2}} )}+\var^{-1}\|z_{L}^{\var}\|_{L^1_{t}(\dot{B}^{\frac{1}{2}})}.
\end{aligned}
\end{equation}
It follows from $\eqref{EM1}_{1}$, \eqref{uv2} and \eqref{F1} that
\begin{equation}\nonumber
\begin{aligned}
\var\|\partial_{t}\nabla n^{\var}\|_{\widetilde{L}^{2}(\dot{B}^{\frac{1}{2}} )}&\lesssim \var \|u\|_{\widetilde{L}^{2}(\dot{B}^{\frac{5}{2}} )}+\var \|u\cdot\nabla n\|_{\widetilde{L}^{2}(\dot{B}^{\frac{3}{2}} )}+\var\|G(n)\div u\|_{\widetilde{L}^{2}(\dot{B}^{\frac{3}{2}} )}\\
&\lesssim (1+\|n\|_{\widetilde{L}^{\infty}(\dot{B}^{\frac{3}{2}} )}) \var \|u\|_{\widetilde{L}^{2}(\dot{B}^{\frac{5}{2}} )}+ \|u\|_{\widetilde{L}^{2}(\dot{B}^{\frac{3}{2}} )}\|n\|_{\widetilde{L}^{2}(\dot{B}^{\frac{5}{2}})}.
\end{aligned}
\end{equation}
Together with  \eqref{r1}, \eqref{d2} and \eqref{d20}, we arrive at
\begin{equation}\nonumber
\begin{aligned}
\var\|\partial_{t}\nabla n^{\var}\|_{\widetilde{L}^{2}(\dot{B}^{\frac{1}{2}} )}\lesssim (1+\mathcal{E}_0^{\var})\mathcal{E}_0^{\var}.
\end{aligned}
\end{equation}
Hence, it follows from $\eqref{EM1}_{3}$, \eqref{r1}, \eqref{d2} and \eqref{uv2} that
\begin{equation}\nonumber
\begin{aligned}
\var\|\partial_{t}E^{\var}\|_{\widetilde{L}^{2}(\dot{B}^{\frac{1}{2}} )}&\lesssim  \| H^{\var}\|_{\widetilde{L}^{2}(\dot{B}^{\frac{3}{2}} )}+\|u^{\var}\|_{\widetilde{L}^{2}(\dot{B}^{\frac{1}{2}} )}(1+\|H^{\var}\|_{\widetilde{L}^{\infty}(\dot{B}^{\frac{3}{2}} )})\lesssim    (1+\mathcal{E}_0^{\var})\mathcal{E}_0^{\var}.
\end{aligned}
\end{equation}
Similarly, it holds that
\begin{equation}\nonumber
\begin{aligned}
\var\|F^{\var}\|_{\widetilde{L}^{2}(\dot{B}^{\frac{1}{2}} )}&\lesssim \var \|u^{\var}\cdot\nabla u^{\var}\|_{\widetilde{L}^{2}(\dot{B}^{\frac{1}{2}} )}+\|u^{\var}\times H^{\var}\|_{\widetilde{L}^{2}(\dot{B}^{\frac{1}{2}} )}\\
&\lesssim \|u^{\var}\|_{\widetilde{L}^{2}(\dot{B}^{\frac{1}{2}} )} (\|u^{\var}\|_{\widetilde{L}^{2}(\dot{B}^{\frac{3}{2}} )}+\|H^{\var}\|_{\widetilde{L}^{2}(\dot{B}^{\frac{3}{2}} )})\lesssim (\mathcal{E}_0^{\var})^2.
\end{aligned}
\end{equation}
Substituting the above estimates into \eqref{qwer} and using \eqref{47}, we end up with
$$
\|\widetilde{z}^{\var}\|_{\widetilde{L}^{2}(\dot{B}^{\frac{1}{2}} )}\leq C \var\mathcal{E}_0^{\var}.
$$
This completes the proof of Proposition \ref{propenhancez}.
\end{proof}

\subsection{Proof of Theorem \ref{theorem4}}\label{subsectionill}
Let $(\rho^{\var},u^{\var},E^{\var},B^{\var})$ and $\rho^{*}$ be the solutions to \eqref{EMvar}-\eqref{EMdvar} and \eqref{DD} from Theorems \ref{theorem1} and \ref{theorem2} associated with the initial data $(\rho^{\var}_{0},u^{\var}_{0},E^{\var}_{0},B^{\var}_{0})$ and $\rho^{*}_{0}$, respectively. Denote $E^{*}=\nabla(-\Delta)^{-1}(\rho^{*}-\bar{\rho})$ and $B^{*}=\bar{B}$. Now we begin with the proof of Theorem \ref{theorem4}. To that matter, we define the error unknowns as
\begin{equation}
(\delta \rho, \delta u,\delta E,\delta B):=(\rho^{\var}-\rho^{*},u^{\var}-u^{*},E^{\var} -E^{*},B^{\var} -B^{*}).\nonumber
\end{equation}
We will split the proof into two steps.

\begin{itemize}
\item \textbf{Step 1: Convergence estimates for the Euler part $(\delta \rho,\delta u)$}.
\end{itemize}

Recall that the effective velocity $z_{\var}$ is  given by \eqref{dampmode}, and the initial layer correction $z_{L}^{\var}$ is given by Proposition \eqref{propenhancez}. 
Substituting $$
u^{\var}=z_{L}^{\var}+\widetilde{z}^{\var} -\nabla h(\rho^{\var})-E^{\var}-\var u^{\var}\times \bar{B}
$$
into $\eqref{EMvar}_2$, we have
\begin{equation}\label{EMvar12}
\begin{aligned}
&\partial_{t}\rho^{\var}-P'(\bar{\rho})\Delta\rho^{\var}+\bar{\rho} \rho^{\var}\\
&\quad =\div \Big(-\rho^{\var} z_{L}^{\var}+\rho^{\var} \widetilde{z}^{\var}+\var \rho^{\var} u^{\var}\times \bar{B} + (P'(\rho^{\var})-P'(\bar{\rho}))\nabla \rho^{\var}+(\rho^{\var}-\bar{\rho}) E^{\var} \Big),
\end{aligned}
\end{equation}
where $h(\rho)$ is the enthalpy defined by \eqref{enth1}. According to \eqref{EMlimit} and \eqref{EMvar12}, the equation of $\delta \rho$ reads
\begin{equation}\label{delta1}
\begin{aligned}
&\partial_{t}\delta\rho-P'(\bar{\rho})\Delta\delta\rho+\bar{\rho}\delta\rho=F_{1}^{\var}+F_{2}^{\var},\\
\end{aligned}
\end{equation}
where  
\begin{equation}\nonumber
\begin{aligned}
F_{1}^{\var}:&=-\div (\rho^{\var} z_{L}^{\var}) \quad \mbox{and} \quad F_{2}^{\var}:=\div (-\rho^{\var} \widetilde{z}^{\var}+\var\rho^{\var} u^{\var}\times \bar{B} +\delta F)
\end{aligned}
\end{equation}\label{deltaF}
with
\[ \delta F=(P'(\rho^{\var})-P'(\rho^{*}))\nabla\rho^{\var}+(P'(\rho^{*})-P'(\bar{\rho}))\nabla\delta\rho+\delta\rho E^{\var}+(\rho^{*}-\bar{\rho})\delta E. \]

By applying Lemma \ref{maximalL2} to $\eqref{delta1}_{1}$, we obtain
\begin{equation}\label{sfggggggg}
\begin{aligned}
&\|\delta \rho\|_{\widetilde{L}^{\infty}_{t}(\dot{B}^{\frac{1}{2}} )}+\|\delta \rho\|_{\widetilde{L}^2_{t}(\dot{B}^{\frac{1}{2}} )}+\|\delta \rho\|_{\widetilde{L}^2_{t}(\dot{B}^{\frac{3}{2}} )}\\
&\quad\lesssim\|\rho^{\var}_{0}-\rho_{0}^{*}\|_{\dot{B}^{\frac{1}{2}} }+\|F_1^{\var}\|_{L^1_{t}(\dot{B}^{\frac{1}{2}} )}+\|F_2^{\var}\|_{\widetilde{L}^2_{t}(\dot{B}^{-\frac{1}{2}} )}.
\end{aligned}
\end{equation}
Employing the decay estimate of $z_{L}^{\var}$ in \eqref{enhancez}, together with the uniform bound \eqref{r1},  \eqref{uv2} and \eqref{F3}  leads to
\begin{equation}\label{mmmm1}
\begin{aligned}
\|F_1^{\var}\|_{L^1_{t}(\dot{B}^{\frac{1}{2}} )}&\lesssim \|\rho^{\var}z^{\var}_{L}\|_{L^1_{t}(\dot{B}^{\frac{3}{2}} )}\lesssim (1+\|\rho^{\var}-\bar{\rho}\|_{\widetilde{L}^{\infty}_{t}(\dot{B}^{\frac{3}{2}})})\|z^{\var}_{L}\|_{L^1_{t}(\dot{B}^{\frac{3}{2}} )}\lesssim (1+\alpha_{0})\alpha_{0}\var.
\end{aligned}
\end{equation}
Regarding $F^{\var}_2$, we have
\begin{equation}\label{non}
\begin{aligned}
&\|F_2^{\var}\|_{\widetilde{L}^2_{t}(\dot{B}^{-\frac{1}{2}} )}\lesssim \|\rho^{\var} \widetilde{z}^{\var} \|_{\widetilde{L}^2_{t}(\dot{B}^{\frac{1}{2}} )}+\var\|\rho^{\var}u^{\var}\times\bar{B}\|_{\widetilde{L}^2_{t}(\dot{B}^{\frac{1}{2}} )}+\| \delta F\|_{\widetilde{L}^2_{t}(\dot{B}^{\frac{1}{2}} )}.
\end{aligned}
\end{equation}
The nonlinear terms on the right-hand side of \eqref{non} can be estimated as follows. It follows from \eqref{r1} and \eqref{enhancez} that
\begin{equation}\label{sfggggggg311}
\begin{aligned}
\|\rho^{\var} \widetilde{z}^{\var} \|_{\widetilde{L}^2_{t}(\dot{B}^{\frac{1}{2}} )}\lesssim (\bar{\rho}+\|\rho^{\var}-\bar{\rho}\|_{\widetilde{L}^{\infty}_{t}(\dot{B}^{\frac{3}{2}})})\|\widetilde{z}^{\var} \|_{\widetilde{L}^2_{t}(\dot{B}^{\frac{1}{2}} )}\lesssim (1+\alpha_{0})\alpha_{0}\var,
\end{aligned}
\end{equation}
and
\begin{equation}\label{sfggggggg312}
\begin{aligned}
\var\|\rho^{\var}u^{\var}\times\bar{B}\|_{\widetilde{L}^2_{t}(\dot{B}^{\frac{1}{2}} )}\lesssim (1+\|\rho^{\var}-\bar{\rho}\|_{\widetilde{L}^{\infty}_{t}(\dot{B}^{\frac{3}{2}})})\|u^{\var}\|_{\widetilde{L}^2_{t}(\dot{B}^{\frac{1}{2}} )}\lesssim \alpha_{0}\var.
\end{aligned}
\end{equation}
Moreover, we have 
\begin{equation}\nonumber
\begin{aligned}
\| \delta F\|_{\widetilde{L}^2_{t}(\dot{B}^{\frac{1}{2}} )}&\lesssim \|(P'(\rho^{\var})-P'(\rho^{*}))\nabla \rho^{\var}\|_{\widetilde{L}^2_{t}(\dot{B}^{\frac{1}{2}} )}+\|(P'(\rho^{*})-P'(\bar{\rho}))\nabla\delta\rho\|_{\widetilde{L}^2_{t}(\dot{B}^{\frac{1}{2}} )}\\
&\quad+\|\delta\rho E^{\var}\|_{\widetilde{L}^2_{t}(\dot{B}^{\frac{1}{2}} )} +\|(\rho^{*}-\bar{\rho})\delta E \|_{\widetilde{L}^2_{t}(\dot{B}^{\frac{1}{2}} )}.
\end{aligned}
\end{equation}
It follows from \eqref{uv2} and \eqref{F3} that
\begin{equation}\nonumber
\begin{aligned}
\|(P'(\rho^{\var})-P'(\rho^{*}))\nabla \rho^{\var}\|_{\widetilde{L}^2_{t}(\dot{B}^{\frac{1}{2}} )}&\lesssim \|P'(\rho^{\var})-P'(\rho^{*})\|_{\widetilde{L}^2_{t}(\dot{B}^{\frac{3}{2}})}\|\rho^{\var}-\bar{\rho}\|_{\widetilde{L}^2_{t}(\dot{B}^{\frac{3}{2}} )}\lesssim \alpha_0 \|\delta \rho\|_{\widetilde{L}^2_{t}(\dot{B}^{\frac{3}{2}})}.
\end{aligned}
\end{equation}
Similarly, 
\begin{equation}\nonumber
\begin{aligned}
\|(P'(\rho^{*})-P'(\bar{\rho}))\nabla\delta\rho\|_{\widetilde{L}^2_{t}(\dot{B}^{\frac{1}{2}} )}&\lesssim \|P'(\rho^{*})-P'(\bar{\rho})\|_{\widetilde{L}^2_{t}(\dot{B}^{\frac{3}{2}})}\|\delta\rho\|_{\widetilde{L}^2_{t}(\dot{B}^{\frac{3}{2}})}\lesssim \alpha_1\|\delta \rho\|_{\widetilde{L}^2_{t}(\dot{B}^{\frac{3}{2}})}
\end{aligned}
\end{equation}
and
\begin{equation}\nonumber
\begin{aligned}
&\|\delta\rho E^{\var}\|_{\widetilde{L}^2_{t}(\dot{B}^{\frac{1}{2}} )} +\|(\rho^{*}-\bar{\rho})\delta E \|_{\widetilde{L}^2_{t}(\dot{B}^{\frac{1}{2}} )}\lesssim \|\delta\rho\|_{\widetilde{L}^2_{t}(\dot{B}^{\frac{3}{2}} )}\|E^{\var}\|_{\widetilde{L}^{\infty}_{t}(\dot{B}^{\frac{1}{2}} )}+\|\rho^{*}-\bar{\rho}\|_{\widetilde{L}^{\infty}_{t}(\dot{B}^{\frac{3}{2}} )}\|\delta E\|_{\widetilde{L}^2_{t}(\dot{B}^{\frac{1}{2}} )}.
\end{aligned}
\end{equation}
Gathering \eqref{r1} and \eqref{r2}, we get
\begin{equation}\label{sfggggggg3}
\begin{aligned}
\| \delta F\|_{\widetilde{L}^2_{t}(\dot{B}^{\frac{1}{2}} )}&\lesssim (\alpha_0+\alpha_1)(\|\delta\rho\|_{\widetilde{L}^2_{t}(\dot{B}^{\frac{1}{2}} )}+\|\delta \rho\|_{\widetilde{L}^2_{t}(\dot{B}^{\frac{3}{2}})}+\|\delta E\|_{\widetilde{L}^2_{t}(\dot{B}^{\frac{1}{2}} )}).
\end{aligned}
\end{equation}
Putting the above estimates \eqref{mmmm1}-\eqref{sfggggggg3} and \eqref{sfggggggg} together, we arrive at
\begin{equation}\label{thm41}
\begin{aligned}
&\|\delta \rho\|_{\widetilde{L}^{\infty}_{t}(\dot{B}^{\frac{1}{2}} )}+\|\delta \rho\|_{\widetilde{L}^2_{t}(\dot{B}^{\frac{1}{2},\frac{3}{2}} )}\\
&\quad\lesssim \|\rho_0^{\var}-\rho_0^*\|_{\dot{B}^{\frac{1}{2}}}+\var +(\alpha_{0}+\alpha_{1})(\|\delta \rho\|_{\widetilde{L}^2_{t}(\dot{B}^{\frac{1}{2},\frac{3}{2}} )}+\|\delta E\|_{\widetilde{L}^{2}_{t}(\dot{B}^{\frac{3}{2}} )}).
\end{aligned}
\end{equation}


Next, we turn to bound $\delta u$. Keep in mind that $u^{\var}_{L}=e^{-\frac{t}{\var^2}}\frac{1}{\var}u_0$. The variable $\delta u-u^{\var}_{L}$ can be written in the form of 
\begin{align}
&\delta u-u^{\var}_{L}=z^{\var}_{L}-u^{\var}_{L}+\widetilde{z}^{\var} -\nabla(h(\rho^{\var})-h(\rho^{*}))-\delta E
\end{align}
which implies that
\begin{equation}\nonumber
\begin{aligned}
\|\delta u-u^{\var}_{L}\|_{\widetilde{L}^2_{t}(\dot{B}^{\frac{1}{2}})}&\lesssim \|z^{\var}_{L}-u^{\var}_{L} \|_{\widetilde{L}^2_{t}(\dot{B}^{\frac{1}{2}} )}+\|\widetilde{z}^{\var} \|_{\widetilde{L}^2_{t}(\dot{B}^{\frac{1}{2}} )}+\|h(\rho^{\var})-h(\rho^{*})\|_{\widetilde{L}^2_{t}(\dot{B}^{\frac{3}{2}} )}+\|\delta E\|_{\widetilde{L}^2_{t}(\dot{B}^{\frac{1}{2}} )}.
\end{aligned}
\end{equation}
The first term can be estimated by
\begin{equation}\nonumber
\begin{aligned}
\|z^{\var}_{L}-u^{\var}_{L} \|_{\widetilde{L}^2_{t}(\dot{B}^{\frac{1}{2}} )}\leq \Big(\int_{0}^{t}e^{-\frac{2\tau}{\var^2}}\,d\tau\Big)^{\frac{1}{2}} (\|n(\rho_0)\|_{\dot{B}^{\frac{1}{2}}}+\|E_0\|_{\dot{B}^{\frac{1}{2}}}+ \|u_0\times\bar{B}\|_{\dot{B}^{\frac{1}{2}}})\leq C \alpha_{0} \var.
\end{aligned}
\end{equation}
In view of \eqref{F3}, we get
\begin{equation}\nonumber
\begin{aligned}
&\|h(\rho^{\var})-h(\rho^{*})\|_{\widetilde{L}^2_{t}(\dot{B}^{\frac{3}{2}} )}\lesssim \|\delta \rho\|_{\widetilde{L}^2_{t}(\dot{B}^{\frac{3}{2}} )}.
\end{aligned}
\end{equation}
Thus, together with \eqref{enhancez}, it holds that
\begin{equation}\label{thm42}
\begin{aligned}
\|\delta u-u^{\var}_{L}\|_{\widetilde{L}^2_{t}(\dot{B}^{\frac{1}{2}})}&\lesssim   \var+\|\delta\rho\|_{\widetilde{L}^2_{t}(\dot{B}^{\frac{3}{2}} )}+\|\delta E\|_{\widetilde{L}^2_{t}(\dot{B}^{\frac{1}{2}} )}.
\end{aligned}
\end{equation}

\begin{itemize}
\item \textbf{Step 2: Convergence estimates for the Maxwell part $(\delta E,\delta H)$}.
\end{itemize}
Note that
\[  u^{\var}=z_{L}^{\var}+\widetilde{z}^{\var}
-\nabla h(\rho^{\var})-E^{\var}-\var u^{\var}\times\bar{B}.  \]
We rewrite $\eqref{EMvar}_3$-$\eqref{EMvar}_4$ as follows
\begin{equation}\label{EMvar120}
\left\{
\begin{aligned}
&\partial_{t}E^{\var} -\frac{1}{\var}\nabla\times B^{\var}+ \rho^{\var} E^{\var} =\rho^{\var}(z_{L}^{\var}+\widetilde{z}^{\var})-\var u^{\var}\times \bar{B}-\nabla P(\rho^{\var}),\\
&\partial_{t}B^{\var} +\frac{1}{\var}\nabla\times E^{\var}=0,\\
&\div E^{\var} =\bar{\rho}-\rho^{\var},\quad\quad \div B^{\var} =0.
\end{aligned}
\right.
\end{equation}
Due to $E^{*}=\nabla(-\Delta)^{-1}(\rho^{*}-\bar{\rho})$, Darcy's law \eqref{darcy} and the fact that $\nabla\div =\nabla\times\nabla\times+\Delta$, one has
\begin{equation}\nonumber
\begin{aligned}
 \partial_{t}E^{*}&=-\nabla(-\Delta)^{-1}\div(\rho^{*}u^{*})=\rho^{*}u^{*}+\nabla\times B^{1,*}=-\rho^{*}E^{*}-\nabla P(\rho^{*})+\nabla\times B^{1,*},
\end{aligned}
\end{equation}
with the term 
$$
B^{1,*}=-(-\Delta)^{-1}\nabla\times(\rho^{*}u^{*}).
$$
Hence, recalling $B^{*}=\bar{B}$, we have the equations of $(E^*,B^{*})$ as follows
\begin{equation}\label{deE*}
\left\{
\begin{aligned}
&\partial_{t}E^{*} -\frac{1}{\var}\nabla\times B^{*}+ \rho^{*} E^{*} =-\nabla P(\rho^{*})+\nabla\times B^{1,*},\\
&\partial_{t}B^{*} +\frac{1}{\var}\nabla\times E^{*}=0,\\
&\div E^{*} =\bar{\rho}-\rho^{*},\quad\quad \div B^{*} =0.
\end{aligned}
\right.
\end{equation}
Note that there is no decay property for the last term $\nabla\times B^{1,*}$ with respect to $\var$ on the right-hand side of $\eqref{deE*}_2$. In order to handle this term, we introduce the modified error of  the magnetic induction
$$
\delta \mathcal{B}:=\delta B+\var  B^{1,*}.
$$
Then, by \eqref{EMvar12}, \eqref{deE*}, we obtain the equations of $(\delta E,\delta \mathcal{B})$ as follows
\begin{equation}\label{delta2}
\left\{
\begin{aligned}
&\partial_{t}\delta E-\frac{1}{\var}\nabla\times \delta \mathcal{B}+\bar{\rho}\delta E-P'(\bar{\rho})\nabla\div \delta E=\rho^{\var} (z_{L}^{\var}+\widetilde{z}^{\var})-\var\rho^{\var} u^{\var}\times \bar{B}-\delta F ,\\
&\partial_{t}\delta \mathcal{B}+\frac{1}{\var}\nabla\times \delta E=\var \partial_{t}B^{1,*},\\
&\div \delta E=-\delta\rho,\quad\quad \div \delta \mathcal{B}=0,
\end{aligned}
\right.
\end{equation}
where the nonlinear term $\delta F$ is given by \eqref{deltaF}. 

Then, we perform a hypocoercivity argument for the partially dissipative system \eqref{delta2}. From \eqref{delta2} and $\div\nabla\times=0$, we have the localized energy estimate  
\begin{equation}\label{basicdeltaEB}
\begin{aligned}
&\frac{1}{2}\frac{d}{dt}\|(\delta E_{j},\delta \mathcal{B}_{j})\|_{L^2}^2+\bar{\rho} \| \delta E_{j}\|_{L^2}^2+P'(\bar{\rho})\|\div\delta E_j\|_{L^2}^2\\
&~\leq \|\dot{\Delta}_{j} (\rho^{\var} \widetilde{z}^{\var}-\var\rho^{\var} u^{\var}\times \bar{B}-P'(\bar{\rho})\nabla\delta\rho-\delta F)\|_{L^2} \| \delta E_j\|_{L^2}\\
&\quad+\|\dot{\Delta}_{j} (\rho^{\var} z_{L}^{\var})\|_{L^2}\|\delta E_j\|_{L^2}+\var\|\partial_{t}B^{1,*}_{j}\|_{L^2}\| \delta\mathcal{B}_{j}\|_{L^2},
\end{aligned}
\end{equation}
and the cross estimate
\begin{equation}\label{crossdeltaEB}
\begin{aligned}
&-\frac{d}{dt}\int \var \delta E_j \cdot \nabla\times   \delta\mathcal{B}_j\,dx+\|\nabla\times  \mathcal{B}_j\|_{L^2}^2\\
&\quad+\bar{\rho}\var \int  \delta E_{j} \cdot\nabla\times  \delta\mathcal{B}_j\,dx-\|\nabla\times   E_j\|_{L^2}^2\\
&\leq \var\|\dot{\Delta}_{j}(\rho^{\var} \widetilde{z}^{\var}-\var\rho^{\var} u^{\var}\times \bar{B}-\delta F)\|_{L^2}\|\nabla\times\delta\mathcal{B}_{j}\|_{L^2}\\
&\quad+\var \|\dot{\Delta}_{j} (\rho^{\var} z_{L}^{\var})\|_{L^2}\|\nabla\times \delta \mathcal{B}_j\|_{L^2}+\var^2\|\partial_{t}B^{1,*}_{j}\|_{L^2}\|\nabla\times  \delta E_j\|_{L^2}.
\end{aligned}
\end{equation}
For a suitable small $\eta_{*}>0$, we define the functional
$$
\delta \mathcal{L}_{j}(t):=\frac{1}{2}\|(  \delta E_{j},  \delta \mathcal{B}_{j})\|_{L^2}^2+\eta_{*} \min\{1,2^{-2j}\}\int \var  \delta E_j \cdot \nabla\times  \delta\mathcal{B}_j\,dx\sim \|(  \delta E_{j},  \delta \mathcal{B}_{j})\|_{L^2}^2.
$$
Here, $\min\{1,2^{-2j}\}=1$ for $j\leq0$ and $\min\{1,2^{-2j}\}=2^{-2j}$ for $j\geq 1$. It follows from \eqref{basicdeltaEB} and \eqref{crossdeltaEB} that
\begin{equation}\label{426}
\begin{aligned}
&\frac{d}{dt}\delta \mathcal{L}_{j}(t)+\|  \delta E_{j}\|_{L^2}^2+\min\{1,2^{2j}\}\|  \delta \mathcal{B}_j\|_{L^2}^2\\
&\lesssim (\var\|\partial_{t} B^{1,*}_{j}\|_{L^2}+ \|\dot{\Delta}_{j} (\rho^{\var} z_{L}^{\var})\|_{L^2})\sqrt{\delta \mathcal{L}_{j}(t)}\\
&\quad+(\|\dot{\Delta}_{j}(\rho^{\var} \widetilde{z}^{\var})\|_{L^2}+\var\|\dot{\Delta}_{j}(\rho^{\var} u^{\var}\times \bar{B})\|_{L^2}+ \|\delta\rho_{j}\|_{L^2} +\|\delta F_{j}\|_{L^2})\\
&\quad\quad  \times(\|\delta   E_{j}\|_{L^2}+\min\{1,2^{j}\}\|   \delta\mathcal{B}_j\|_{L^2}).
\end{aligned}
\end{equation}
Therefore, applying Lemma \ref{lemmaL1L2} to \eqref{426}, once again implies that 
\begin{equation}\label{427}
\begin{aligned}
&\|( \delta E_{j},  \delta \mathcal{B}_{j})\|_{L^{\infty}_{t}(L^2)}+\|  \delta E_{j}\|_{L^2_{t}(L^2)}+\min\{1,2^{j}\}\|  \delta \mathcal{B}_j\|_{L^2_{t}(L^2)}\\
&\lesssim\|( \delta E_{j},  \delta \mathcal{B}_{j})(0)\|_{L^2}+ \var\|\partial_{t} B^{1,*}_{j}\|_{L^1_{t}(L^2)}+\|\dot{\Delta}_{j} (\rho^{\var} z_{L}^{\var})\|_{L^1_{t}(L^2)}\\
&\quad+\|\dot{\Delta}_{j}(\rho^{\var} \widetilde{z}^{\var})\|_{L^2_{t}(L^2)}+\var\|\dot{\Delta}_{j}(\rho^{\var} u^{\var}\times \bar{B})\|_{L^2_{t}(L^2)}+\|\delta F_{j}\|_{L^2_{t}(L^2)},
\end{aligned}
\end{equation}
which leads to
\begin{equation}\label{428}
\begin{aligned}
&\|(\delta E,\delta \mathcal{B})\|_{\widetilde{L}^{\infty}_{t}(\dot{B}^{\frac{1}{2}} )}+\|\delta E\|_{\widetilde{L}^2_{t}(\dot{B}^{\frac{1}{2}} )}+\|\delta \mathcal{B}\|_{\widetilde{L}^2_{t}(\dot{B}^{\frac{3}{2},\frac{1}{2}})}\\
&\quad\lesssim \|(E^{\var}_{0}-E_{0}^{*},B^{\var}_{0}-\bar{B},\var B^{1,*}(0))\|_{\dot{B}^{\frac{1}{2}} }+\var\|\partial_{t} B^{1,*}_{j}\|_{L^1_{t}(\dot{B}^{\frac{1}{2}} )}+ \|\rho^{\var} z_{L}^{\var}\|_{L^1_{t}(\dot{B}^{\frac{1}{2}})}+\|\rho^{\var}\widetilde{z}^{\var} \|_{\widetilde{L}^2_{t}(\dot{B}^{\frac{1}{2}} )}\\
&\quad\quad+\var\|\rho^{\var} u^{\var}\times \bar{B}\|_{\widetilde{L}^2_{t}(\dot{B}^{\frac{1}{2}} )}+ \|\delta F\|_{\widetilde{L}^2_{t}(\dot{B}^{\frac{1}{2}} )}.
\end{aligned}
\end{equation}
According to \eqref{enhancez}, we can obtain the decay of $\rho^{\var} z_{L}^{\var}$ as follows
\begin{equation}\label{429}
\begin{aligned}
\|\rho^{\var} z_{L}^{\var}\|_{L^1_{t}(\dot{B}^{\frac{1}{2}})}\lesssim (1+\|\rho^{\var}-\bar{\rho}\|_{\widetilde{L}^{\infty}_{t}(\dot{B}^{\frac{3}{2}})}) \|z_{L}^{\var}\|_{L^1_{t}(\dot{B}^{\frac{1}{2}})}\lesssim \alpha_{0}\var.
\end{aligned}
\end{equation}
Substituting \eqref{mmmm1}-\eqref{sfggggggg3} and \eqref{429} into \eqref{428}, we get
\begin{equation}\label{thm43}
\begin{aligned}
&\|(\delta E,\delta \mathcal{B})\|_{\widetilde{L}^{\infty}_{t}(\dot{B}^{\frac{1}{2}} )}+\|\delta E\|_{\widetilde{L}^2_{t}(\dot{B}^{\frac{1}{2}} )}+\|\delta \mathcal{B}\|_{\widetilde{L}^2_{t}(\dot{B}^{\frac{3}{2},\frac{1}{2}})}\\
&\quad\lesssim \|(E^{\var}_{0}-E_{0}^{*},B^{\var}_{0}-\bar{B})\|_{\dot{B}^{\frac{1}{2}} }+(\alpha_{0}+\alpha_{1})(\|\delta \rho\|_{\widetilde{L}^2_{t}(\dot{B}^{\frac{1}{2},\frac{3}{2}} )}+\|\delta E\|_{\widetilde{L}^{2}_{t}(\dot{B}^{\frac{1}{2}} )})\\
&\quad\quad+\var\|\partial_{t} B^{1,*}\|_{L^1_{t}(\dot{B}^{\frac{1}{2}} )}+\var\| B^{1,*}(0)\|_{\dot{B}^{\frac{1}{2}} },
\end{aligned}
\end{equation}
where we have employed  \eqref{sfggggggg}-\eqref{thm41} which have been obtained in Step 1.

\bigbreak

In order to obtain the convergence rate, one needs to establish uniform bounds for $B^{1,*}(0)$ and $\partial_{t}B^{1,*}$ on the right-hand side of \eqref{thm43}. Then, we shall use uniform bounds of  $B^{1,*}$ to recover error estimates for $\delta B=\delta\mathcal{B}-\var B^{1,*}$. Below, we establish some necessary bounds of $B^{1,*}$.
 
\begin{lemma}\label{lemmaB1}
Let $B^{1,*}=-(-\Delta)^{-1}\nabla\times (\rho^{*}u^{*})$. Assume that $\rho^{*}_0$ satisfies \eqref{a2} and $\rho^{*}_0-\bar{\rho}\in \dot{B}^{-\frac{1}{2}}$. Then, $\rho^*$ satisfies
\begin{equation}\label{rho*additional}
\begin{aligned}
&\|\rho^*-\bar{\rho}\|_{\widetilde{L}^{\infty}(\dot{B}^{-\frac{1}{2}})}+\|\rho^*-\bar{\rho}\|_{\widetilde{L}^{2}(\dot{B}^{-\frac{1}{2}})}\lesssim \|\rho_0^*-\bar{\rho}\|_{\dot{B}^{-\frac{1}{2}}\cap\dot{B}^{\frac{3}{2}}}.
\end{aligned}
\end{equation}
Furthermore, it holds that
\begin{equation}\label{B1e}
\left\{
\begin{aligned}
\|B^{1,*}(0)\|_{\dot{B}^{\frac{1}{2}} }&\lesssim \|\rho_{0}^{*}-\bar{\rho}\|_{\dot{B}^{-\frac{1}{2},\frac{3}{2}}}^2,\\
\|B^{1,*}\|_{\widetilde{L}^{\infty}_{t}(\dot{B}^{\frac{1}{2}})\cap \widetilde{L}^2_{t}(\dot{B}^{\frac{1}{2}})}&\lesssim \|\rho_{0}^{*}-\bar{\rho}\|_{\dot{B}^{-\frac{1}{2},\frac{3}{2}}}^2,\\
\|\partial_{t}B^{1,*}\|_{L^1_{t}(\dot{B}^{\frac{1}{2}})}&\lesssim  \|\rho_{0}^{*}-\bar{\rho}\|_{\dot{B}^{-\frac{1}{2},\frac{3}{2}}}^2.
\end{aligned}
\right.
\end{equation}
\end{lemma}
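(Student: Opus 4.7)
The plan is to prove the three estimates in sequence, with \eqref{rho*additional} serving as a prerequisite for the $B^{1,*}$ bounds. The key observation is that Darcy's law, combined with the identity $\nabla\times(f\nabla g) = \nabla f\times\nabla g$, reveals a manifestly quadratic structure of $B^{1,*}$ in terms of $f := \rho^*-\bar{\rho}$. This quadratic structure explains the $\|\cdot\|^2$ scaling on the right-hand sides of \eqref{B1e}, but the singular nature of $(-\Delta)^{-1}$ at low frequencies forces us to first upgrade Theorem \ref{theorem2} by \eqref{rho*additional}.

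For \eqref{rho*additional}, I would apply the maximal regularity estimate (Lemma \ref{maximalL2}) to the reformulation \eqref{DD11} at regularity index $s = -1/2$ instead of $s = 1/2, 3/2$. The left-hand side of \eqref{DD11} has both heat diffusion and $\bar{\rho}$-damping, which together yield
\[
\|f\|_{\widetilde{L}^\infty_t(\dot{B}^{-1/2})} + \|f\|_{\widetilde{L}^2_t(\dot{B}^{-1/2})} \lesssim \|f_0\|_{\dot{B}^{-1/2}} + \|\mathrm{NL}\|_{\widetilde{L}^2_t(\dot{B}^{-5/2})},
\]
where NL denotes the two divergence-form nonlinearities. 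To control them, I would use the product laws \eqref{uv2}/\eqref{uv1} (which absorb a derivative via the divergence) together with the composition estimate \eqref{F3} and the fact that $\nabla(-\Delta)^{-1}$ shifts Besov regularity by $+1$. The existing bound \eqref{r2} from Theorem \ref{theorem2} supplies the high-frequency $\dot{B}^{3/2}$ control, while the smallness $\alpha_1$ allows absorbing the $\widetilde{L}^2_t(\dot{B}^{-1/2})$ contributions on the left-hand side.

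For \eqref{B1e}, using Darcy's law $u^* = -\nabla(h(\rho^*)+\phi^*)$ and noting that $\rho^*\nabla h(\rho^*) = \nabla P(\rho^*)$ is a pure gradient (hence curl-free), one obtains
\[
\nabla\times(\rho^* u^*) = -\nabla f\times\nabla\phi^* = \nabla f\times\nabla(-\Delta)^{-1}f,
\]
so that $B^{1,*} = (-\Delta)^{-1}[\nabla f\times\nabla(-\Delta)^{-1}f]$, up to sign. Since $(-\Delta)^{-1}$ gains two derivatives, the initial bound reduces to controlling $\|\nabla f_0\times\nabla(-\Delta)^{-1}f_0\|_{\dot{B}^{-3/2}}$, which is a product of two factors each linear in $f_0$. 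Using hybrid Besov product laws that allocate the low- and high-frequency regularities separately (exploiting $\nabla(-\Delta)^{-1}:\dot{B}^{s}\to\dot{B}^{s+1}$), this matches the target $\|f_0\|_{\dot{B}^{-1/2,3/2}}^2$. The space-time estimates follow from the same identity by distributing $\widetilde{L}^\infty_t$ and $\widetilde{L}^2_t$ norms between the two factors; \eqref{rho*additional} provides the $\widetilde L^\infty_t(\dot B^{-1/2})\cap\widetilde L^2_t(\dot B^{-1/2})$ regularity for the low-frequency factor, while the high-frequency factor draws on \eqref{r2}.

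For $\partial_t B^{1,*}$, I would differentiate the identity $B^{1,*} = (-\Delta)^{-1}[\nabla f\times\nabla(-\Delta)^{-1}f]$ in time, producing two symmetric terms each involving one $\partial_t f$ factor. Substituting $\partial_t f$ via the equation \eqref{DD11} and invoking the $L^1_t(\dot B^{1/2,3/2})$ control of $\partial_t \rho^*$ from Theorem \ref{theorem2} (combined with \eqref{rho*additional} at low frequencies to control $\partial_t f$ in $L^1_t(\dot B^{-1/2})$), the same product laws yield the desired $L^1_t(\dot B^{1/2})$ bound. The main obstacle throughout is taming the low-frequency singularity of the double $(-\Delta)^{-1}$ inside a quadratic nonlinearity, which is precisely the reason the auxiliary $\dot{B}^{-1/2}$ estimate \eqref{rho*additional} must be established first.
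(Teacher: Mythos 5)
Your overall architecture coincides with the paper's: first upgrade Theorem \ref{theorem2} to the auxiliary estimate \eqref{rho*additional} by applying Lemma \ref{maximalL2} to \eqref{DD11}, then exploit the fact that the gradient parts of $\rho^*u^*$ are curl-free so that $B^{1,*}=(-\Delta)^{-1}\nabla\times\bigl((\rho^*-\bar\rho)\nabla(-\Delta)^{-1}(\rho^*-\bar\rho)\bigr)$ is genuinely quadratic in $f:=\rho^*-\bar\rho$, and finally differentiate this identity in time, substituting $\partial_t\rho^*$ from the equation. However, two of your displayed reductions overshoot the regularity by one derivative and would fail as written. First, the inequality $\|f\|_{\widetilde L^\infty_t(\dot B^{-1/2})}+\|f\|_{\widetilde L^2_t(\dot B^{-1/2})}\lesssim\|f_0\|_{\dot B^{-1/2}}+\|\mathrm{NL}\|_{\widetilde L^2_t(\dot B^{-5/2})}$ is not an instance of Lemma \ref{maximalL2}: at the level $s=-\tfrac12$ the admissible $\widetilde L^2_t$ forcing slots are $\dot B^{-3/2}$ (the $f_2$ slot) and $\dot B^{-1/2}$ (the $f_3$ slot), and there is no $\dot B^{s-2}$ slot. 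Worse, writing $\mathrm{NL}=\div g$, placing $g$ in $\dot B^{-3/2}$ (let alone lower) requires $s_1+s_2=0$ in the product law \eqref{uv2}, which is excluded. The paper avoids all of this by putting $\div g$ in the $f_3$ slot, i.e.\ bounding $g=(P'(\rho^*)-P'(\bar\rho))\nabla\rho^*+(\rho^*-\bar\rho)\nabla(-\Delta)^{-1}\rho^*$ in $\widetilde L^2_t(\dot B^{1/2})$ directly from \eqref{r2}; no absorption is needed at all. (Your absorption route is salvageable if you measure $g$ in $\widetilde L^2_t(\dot B^{-1/2})$, i.e.\ $\div g$ in the $f_2$ slot $\dot B^{-3/2}$, where $s_1+s_2=1>0$ applies.)

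Second, reducing $\|B^{1,*}(0)\|_{\dot B^{1/2}}$ to $\|\nabla f_0\times\nabla(-\Delta)^{-1}f_0\|_{\dot B^{-3/2}}$ hits the same forbidden endpoint: any splitting of that product lands at $s_1+s_2=0$ (e.g.\ $\nabla f_0\in\dot B^{-3/2}$ against $\nabla(-\Delta)^{-1}f_0\in\dot B^{3/2}$), so \eqref{uv2} does not apply and your appeal to unspecified ``hybrid product laws'' does not close the step within the paper's toolbox. The fix is to keep one curl inside the operator, i.e.\ regard $(-\Delta)^{-1}\nabla\times$ as a single operator of degree $-1$ and estimate $\|f_0\,\nabla(-\Delta)^{-1}f_0\|_{\dot B^{-1/2}}\lesssim\|f_0\|_{\dot B^{-1/2}}\|\nabla(-\Delta)^{-1}f_0\|_{\dot B^{3/2}}\lesssim\|f_0\|_{\dot B^{-1/2}}\|f_0\|_{\dot B^{1/2}}$, where $s_1+s_2=1>0$; this is exactly what the paper does, and the same remark applies to the space--time and $\partial_tB^{1,*}$ bounds. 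With these two corrections your argument closes as in the paper.
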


\begin{proof}
We first show \eqref{rho*additional}. Applying Lemma \ref{maximalL2} to \eqref{DD11} with
\[  f_1=f_2=0,\quad f_3=\div((P'(\rho^{*})-P'(\bar{\rho}))\nabla\rho^{*})
+\div ((\rho^{*}-\bar{\rho})\nabla(-\Delta)^{-1}\rho^{*}),  \]
we obtain
\begin{equation}\nonumber
\begin{aligned}
&\|\rho^*-\bar{\rho}\|_{\widetilde{L}^{\infty}(\dot{B}^{-\frac{1}{2}})}+\|\rho^*-\bar{\rho}\|_{\widetilde{L}^{2}(\dot{B}^{-\frac{1}{2}})}\\
&\quad \lesssim \|\rho_0^*-\bar{\rho}\|_{\dot{B}^{-\frac{1}{2}}}+\|(P'(\rho^{*})-P'(\bar{\rho}))\nabla\rho^{*}\|_{\widetilde{L}^{2}(\dot{B}^{\frac{1}{2}})}+\| (\rho^{*}-\bar{\rho})\nabla(-\Delta)^{-1}\rho^{*}\|_{\widetilde{L}^{2}(\dot{B}^{\frac{1}{2}})}.
\end{aligned}
\end{equation}
In accordance with \eqref{uv2}, \eqref{F1} and \eqref{r2}, we obtain
\begin{equation}\nonumber
\begin{aligned}
\|(P'(\rho^{*})-P'(\bar{\rho}))\nabla\rho^{*}\|_{\widetilde{L}^{2}(\dot{B}^{\frac{1}{2}})}\lesssim \|P'(\rho^{*})-P'(\bar{\rho}\|_{\widetilde{L}^{\infty}(\dot{B}^{\frac{3}{2}})}\|\rho^*-\bar{\rho}\|_{\widetilde{L}^{2}(\dot{B}^{\frac{3}{2}})}\lesssim \|\rho_{0}^{*}-\bar{\rho}\|_{\dot{B}^{\frac{1}{2},\frac{3}{2}}}^2.
\end{aligned}
\end{equation}
Similarly, 
\begin{equation}\nonumber
\begin{aligned}
\| (\rho^{*}-\bar{\rho})\nabla(-\Delta)^{-1}\rho^{*}\|_{\widetilde{L}^{2}(\dot{B}^{\frac{1}{2}})}&\lesssim \|\rho^{*}-\bar{\rho}\|_{\widetilde{L}^{2}(\dot{B}^{\frac{1}{2}})}^2\lesssim \|\rho_{0}^{*}-\bar{\rho}\|_{\dot{B}^{\frac{1}{2},\frac{3}{2}}}^2.
\end{aligned}
\end{equation}
Therefore, we have \eqref{rho*additional}.

Next, it follows from $E^*=\nabla(-\Delta)^{-1}(\rho^{*}-\bar{\rho})$ that
\begin{equation}\nonumber
\begin{aligned}
B^{1,*}&=(-\Delta)^{-1}\nabla\times \Big(\nabla P(\rho^{*})+\rho^{*}E^*\Big)=(-\Delta)^{-1}\nabla\times\Big( (\rho^{*}-\bar{\rho})\nabla(-\Delta)^{-1}\rho^{*}).
\end{aligned}
\end{equation}
Hence, for the initial datum $B^{1,*}(0)$ of $B^{1,*}$, employing the product law \eqref{uv2}n we arrive at
\begin{equation}\nonumber
\begin{aligned}
\|B^{1,*}(0)\|_{\dot{B}^{\frac{1}{2}} }&\lesssim \|(\rho_0^{*}-\bar{\rho})\nabla(-\Delta)^{-1}\rho_0^{*}\|_{\dot{B}^{-\frac{1}{2}} }\\
&\lesssim \|\rho_0^{*}-\bar{\rho}\|_{\dot{B}^{-\frac{1}{2}} } \|\nabla(-\Delta)^{-1}\rho_{0}^{*}\|_{\dot{B}^{\frac{3}{2}} }\lesssim  \|\rho_{0}^{*}-\bar{\rho}\|_{\dot{B}^{-\frac{1}{2}}}\|\rho_{0}^{*}-\bar{\rho}\|_{\dot{B}^{\frac{1}{2}}}.
\end{aligned}
\end{equation}
Concerning the estimate of $B^{1,*}$, a similar computation gives
\begin{equation}\nonumber
\begin{aligned}
\|B^{1,*}\|_{\widetilde{L}^{\infty}_{t}(\dot{B}^{\frac{1}{2}})\cap\widetilde{L}^{2}_{t}(\dot{B}^{\frac{1}{2}})}&\lesssim \|\rho^{*}u^{*}\|_{\widetilde{L}^{\infty}_{t}(\dot{B}^{-\frac{1}{2}})\cap\widetilde{L}^{2}_{t}(\dot{B}^{-\frac{1}{2}})}\\
&\lesssim  \|\rho^{*}-\bar{\rho}\|_{\widetilde{L}^{\infty}_{t}(\dot{B}^{-\frac{1}{2},\frac{1}{2}})\cap\widetilde{L}^{2}_{t}(\dot{B}^{-\frac{1}{2},\frac{1}{2}})}^2\lesssim \|\rho_{0}^{*}-\bar{\rho}\|_{\dot{B}^{-\frac{1}{2},\frac{3}{2}}}^2,
\end{aligned}
\end{equation}
where we have used \eqref{r2} and \eqref{rho*additional}. 
Finally, using $\eqref{DD}_{1}$, the estimate of the time derivative $\partial_{t}\rho^*$ follows
\begin{equation}\nonumber
\begin{aligned}
\|\partial_{t}\rho^{*}\|_{\widetilde{L}^{2}_{t}(\dot{B}^{-\frac{1}{2},\frac{1}{2}})}&\lesssim \|\rho^*-\bar{\rho}\|_{\widetilde{L}^2_{t}(\dot{B}^{\frac{1}{2}}\cap\dot{B}^{\frac{5}{2}})}\lesssim \|\rho_0^*-\bar{\rho}\|_{\dot{B}^{-\frac{1}{2},\frac{3}{2}}}.
\end{aligned}
\end{equation}
Hence, we obtain 
\begin{equation}\nonumber
\begin{aligned}
\|\partial_{t}B^{1,*}\|_{L^1_{t}(\dot{B}^{\frac{1}{2}})}&\lesssim \|\partial_{t}\rho^{*} \nabla(-\Delta)^{-1}\rho^{*}\|_{L^1_{t}(\dot{B}^{-\frac{1}{2}})}+\|( \rho^{*}-\bar{\rho}) \nabla(-\Delta)^{-1}\partial_{t}\rho^{*}\|_{L^1_{t}(\dot{B}^{-\frac{1}{2}})}\\
&\lesssim \|\partial_{t}\rho^{*}\|_{\widetilde{L}^2_{t}(\dot{B}^{-\frac{1}{2}})}\|\nabla(-\Delta)^{-1}\rho^{*}\|_{\widetilde{L}^{2}_{t}(\dot{B}^{\frac{3}{2}})}+ \|\rho^{*}-\bar{\rho}\|_{\widetilde{L}^{2}_{t}(\dot{B}^{-\frac{1}{2}})}\|\nabla(-\Delta)^{-1}\partial_{t}\rho^{*}\|_{\widetilde{L}^2_{t}(\dot{B}^{\frac{3}{2}})}\\
&\lesssim \|\rho^{*}-\bar{\rho}\|_{\widetilde{L}^{2}_{t}(\dot{B}^{-\frac{1}{2},\frac{1}{2}})}\|\partial_{t}\rho^{*}\|_{\widetilde{L}^2_{t}(\dot{B}^{-\frac{1}{2},\frac{1}{2}})}\lesssim \|\rho_{0}^{*}-\bar{\rho}\|_{\dot{B}^{-\frac{1}{2},\frac{3}{2}}}^2,
\end{aligned}
\end{equation}
which concludes the proof of Lemma \ref{lemmaB1}.
\end{proof}

It follows from \eqref{thm43}, $\eqref{B1e}_1$ and $\eqref{B1e}_3$ that
\begin{equation}\label{thm430}
\begin{aligned}
&\|(\delta E,\delta \mathcal{B})\|_{\widetilde{L}^{\infty}_{t}(\dot{B}^{\frac{1}{2}} )}+\|\delta E\|_{\widetilde{L}^2_{t}(\dot{B}^{\frac{1}{2}} )}+\|\delta \mathcal{B}\|_{\widetilde{L}^2_{t}(\dot{B}^{\frac{3}{2},\frac{1}{2}})}\\
&\quad\lesssim \|(E^{\var}_{0}-E_{0}^{*},B^{\var}_{0}-\bar{B})\|_{\dot{B}^{\frac{1}{2}} }+(\alpha_{0}+\alpha_{1})(\|\delta \rho\|_{\widetilde{L}^2_{t}(\dot{B}^{\frac{1}{2},\frac{3}{2}} )}+\|\delta E\|_{\widetilde{L}^{2}_{t}(\dot{B}^{\frac{1}{2}} )})+\alpha_{0}\var.
\end{aligned}
\end{equation}
In view of $\eqref{B1e}_2$, we  recover the estimate of $\delta B$  as follows
\begin{equation}\label{thm44}
\begin{aligned}
&\|\delta B\|_{\widetilde{L}^{\infty}_{t}(\dot{B}^{\frac{1}{2}} )}+\|\delta B\|_{\widetilde{L}^2_{t}(\dot{B}^{\frac{3}{2},\frac{1}{2}})}\\
&\quad\lesssim \|\delta \mathcal{B}\|_{\widetilde{L}^{\infty}_{t}(\dot{B}^{\frac{1}{2}} )}+\|\delta \mathcal{B}\|_{\widetilde{L}^2_{t}(\dot{B}^{\frac{3}{2},\frac{1}{2}})}+\var\|B^{1,*}\|_{\widetilde{L}^{\infty}_{t}(\dot{B}^{\frac{1}{2}} )}+\var\|B^{1,*}\|_{\widetilde{L}^{2}_{t}(\dot{B}^{\frac{1}{2}} )}\\
&\quad\lesssim \|\delta \mathcal{B}\|_{\widetilde{L}^{\infty}_{t}(\dot{B}^{\frac{1}{2}} )}+\|\delta \mathcal{B}\|_{\widetilde{L}^2_{t}(\dot{B}^{\frac{3}{2},\frac{1}{2}})}+\var.
\end{aligned}
\end{equation}
Putting \eqref{thm41} and \eqref{thm430}-\eqref{thm44} together and  using  the smallness of $\alpha_{0}$ and $\alpha_{1}$, we have
\begin{equation}
\begin{aligned}\label{err}
&\|\rho^{\var}  -\rho^{*}\|_{\widetilde{L}^{\infty}_{t}(\dot{B}^{\frac{1}{2}} )\cap \widetilde{L}^{2}_{t}( \dot{B}^{\frac{1}{2},\frac{3}{2}} )}+\|u^{\var}  -u^{*}\|_{\widetilde{L}^{2}_{t}(\dot{B}^{\frac{1}{2}} )}\\
&\quad\quad+\|E^{\var} -E^{*}\|_{\widetilde{L}^{\infty}_{t}(\dot{B}^{\frac{1}{2}})\cap \widetilde{L}^{2}_{t}(\dot{B}^{\frac{1}{2}})}+\|B^{\var}-B^{*}\|_{\widetilde{L}^{\infty}_{t}(\dot{B}^{\frac{1}{2}})\cap\widetilde{L}^{2}_{t}(\dot{B}^{\frac{3}{2},\frac{1}{2}})} \\
&\quad\lesssim \|(\rho_0^{\var}-\rho_0^{*},E^{\var}_{0}-E_{0}^{*},B^{\var}_{0}-\bar{B})\|_{\dot{B}^{\frac{1}{2}} }+\var.
\end{aligned}
\end{equation}
Finally, the inequality \eqref{convergence2} follows by \eqref{thm42} and  \eqref{err}. The proof of Theorem \ref{theorem4} is complete.

\appendix

\section{Technical lemmas}\label{app1}

We recall some basic properties of Besov spaces and product estimates that are repeatedly used in the manuscript. We refer to \cite[Chapters 2-3]{HJR} for more details. Remark that all the properties remain true for the Chemin--Lerner type spaces, up to the modification of the regularity exponent according to Hölder's inequality for the time variable.

The first lemma pertains to the so-called Bernstein inequalities.
\begin{lemma}\label{lemma61}
Let $0<r<R$, $1\leq p\leq q\leq \infty$ and $k\in \mathbb{N}$. For any function $u\in L^p$ and $\lambda>0$, it holds
\begin{equation}\nonumber
\left\{
\begin{aligned}
&{\rm{Supp}}~ \mathcal{F}(u) \subset \{\xi\in\mathbb{R}^{d}~: ~|\xi|\leq \lambda R\}\Rightarrow \|D^{k}u\|_{L^q}\lesssim\lambda^{k+d(\frac{1}{p}-\frac{1}{q})}\|u\|_{L^p},\\
&{\rm{Supp}}~ \mathcal{F}(u) \subset \{\xi\in\mathbb{R}^{d}~: ~ \lambda r\leq |\xi|\leq \lambda R\}\Rightarrow \|D^{k}u\|_{L^{p}}\sim\lambda^{k}\|u\|_{L^{p}}.
\end{aligned}
\right.
\end{equation}
\end{lemma}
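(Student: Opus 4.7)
The plan is to reduce both inequalities to a single mechanism: rewrite $u$ (or its derivatives) as a convolution against a rescaled Schwartz kernel, and then apply Young's convolution inequality. Because the Fourier support of $u$ is localized to a ball or annulus of radius $\lambda$, the relevant kernels are obtained by rescaling fixed Schwartz functions on $\mathbb{R}^d$, and all the $\lambda$ dependence can be tracked by elementary change of variables.

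For the first inequality, I would fix a smooth radial cutoff $\chi\in\mathcal{C}^\infty_c(\mathbb{R}^d)$ such that $\chi\equiv 1$ on the ball $\{|\xi|\leq R\}$ and $\text{supp}\,\chi\subset\{|\xi|\leq 2R\}$. Since $\text{supp}\,\widehat u\subset\{|\xi|\leq\lambda R\}$, one has the identity $\widehat u(\xi)=\chi(\xi/\lambda)\widehat u(\xi)$, so that for any multi-index $\alpha$ with $|\alpha|=k$,
\begin{equation*}
\partial^\alpha u = \lambda^{d+k}\,g_\alpha(\lambda\,\cdot)\,*\,u,\qquad g_\alpha:=\mathcal{F}^{-1}\!\bigl((i\xi)^\alpha\chi(\xi)\bigr)\in\mathcal{S}(\mathbb{R}^d).
\end{equation*}
Young's inequality with exponents $1+\tfrac1q=\tfrac1r+\tfrac1p$, where $r$ is determined by $\tfrac1r=1-(\tfrac1p-\tfrac1q)$, gives
\begin{equation*}
\|\partial^\alpha u\|_{L^q}\leq \lambda^{d+k}\|g_\alpha(\lambda\,\cdot)\|_{L^r}\|u\|_{L^p}=\lambda^{k+d(\frac1p-\frac1q)}\|g_\alpha\|_{L^r}\|u\|_{L^p},
\end{equation*}
which is the first claim since $\|g_\alpha\|_{L^r}$ depends only on $\chi$, $R$ and $\alpha$.

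For the second inequality, I pick a smooth radial cutoff $\widetilde\chi\in\mathcal{C}^\infty_c(\mathbb{R}^d\setminus\{0\})$ with $\widetilde\chi\equiv 1$ on the annulus $\{r\leq|\xi|\leq R\}$ and $\text{supp}\,\widetilde\chi\subset\{r/2\leq|\xi|\leq 2R\}$. Since $\text{supp}\,\widehat u\subset\{\lambda r\leq|\xi|\leq \lambda R\}$, the identity $\widehat u(\xi)=\widetilde\chi(\xi/\lambda)\widehat u(\xi)$ yields, exactly as above, the direct bound $\|D^k u\|_{L^p}\lesssim \lambda^k\|u\|_{L^p}$. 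For the reverse inequality I exploit the fact that $\widetilde\chi$ is supported away from the origin, so that, for any fixed multi-index $\alpha$ with $|\alpha|=k$, the function $\eta_\alpha(\xi):=(i\xi)^{-\alpha}\widetilde\chi(\xi)$ (interpreting the inverse componentwise on the support of $\widetilde\chi$) is smooth and compactly supported, hence in $\mathcal{S}$. Taking a suitable finite collection $\alpha_1,\dots,\alpha_N$ so that $\sum_j\xi^{\alpha_j}\eta_{\alpha_j}(\xi)=\widetilde\chi(\xi)$ on $\text{supp}\,\widetilde\chi$ (for instance using $|\xi|^{2k}=\sum_j c_j\xi^{2\alpha_j}$), one obtains a representation
\begin{equation*}
u=\lambda^{-k}\sum_j\, \lambda^{d}h_j(\lambda\,\cdot)\,*\,\partial^{\alpha_j} u,\qquad h_j\in\mathcal{S}(\mathbb{R}^d),
\end{equation*}
and Young with $r=1$ yields $\|u\|_{L^p}\lesssim\lambda^{-k}\|D^k u\|_{L^p}$.

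No step is genuinely hard; the only subtlety is keeping the scaling of the convolution kernel bookkeeping consistent, in particular the $\lambda^d$ factor coming from the change of variable $x\mapsto\lambda x$ in the Fourier inversion. A secondary point to check is the second case: one must make sure $\widetilde\chi$ vanishes near $\xi=0$ so that $\xi^{-\alpha}\widetilde\chi$ is Schwartz, and this is exactly where the annular (rather than ball) support hypothesis is essential.
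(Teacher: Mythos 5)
The paper does not actually prove this lemma --- it is recalled as a standard fact with a pointer to \cite[Chapters 2--3]{HJR} --- and your argument is precisely the classical proof given there: write $u$ (or $\partial^\alpha u$) as a convolution with a rescaled Schwartz kernel obtained from a cutoff equal to $1$ on the Fourier support, apply Young's inequality with the exponent relation $1+\tfrac1q=\tfrac1r+\tfrac1p$ to get the direct bounds with the correct power $\lambda^{k+d(\frac1p-\frac1q)}$, and use the identity $|\xi|^{2k}=\sum_j c_j\xi^{2\alpha_j}$ on the annulus to get the reverse bound. The only point to tidy is the phrase about inverting $(i\xi)^{\alpha}$ ``componentwise'': individual coordinates $\xi_i$ may vanish on the annulus, so $(i\xi)^{-\alpha}\widetilde\chi(\xi)$ is not literally well defined, but the construction you actually invoke (dividing by $|\xi|^{2k}$, which is bounded below on $\mathrm{supp}\,\widetilde\chi$) is the correct one and the argument is complete.
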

Next, we state some properties related to homogeneous Besov spaces.
\begin{lemma}\label{lemma62}
Let $d\ge1$ be the dimension. The following properties hold{\rm:}
\begin{itemize}
\item{} For any $s\in\mathbb{R}$ and $q\geq2$, we have the following continuous embeddings{\rm:}
\begin{equation}\nonumber
\begin{aligned}
 \dot{B}^{s}\hookrightarrow  \dot{H}^{s},\quad \quad \dot{B}^{\frac{d}{2}-\frac{d}{q}}\hookrightarrow  L^{q}.
\end{aligned}
\end{equation}
 \item{} $\dot{B}^{\frac{d}{2}}$ is continuously embedded in the set of continuous functions decaying to $0$ at infinity.
\item{}
For any $\sigma\in \mathbb{R}^{d}$, the operator $\Lambda^{\sigma}$ is an isomorphism from $\dot{B}^{s}$ to $\dot{B}^{s-\sigma}$.
\item{} Let $s_{1}\in\mathbb{R}$ and $ s_{2}\leq \frac{d}{2}$. Then
    the space $\dot{B}^{s_{1}}\cap \dot{B}^{s_{2}}$ is a Banach space and satisfies weak compact and Fatou properties: If $u_{k}$ is a uniformly bounded sequence of $\dot{B}^{s_{1}}\cap \dot{B}^{s_{2}}$, then an element $u$ of $\dot{B}^{s_{1}}\cap \dot{B}^{s_{2}}$ and a subsequence $u_{n_{k}}$ exist such that
    \begin{equation}\nonumber
    \begin{aligned}
\lim_{k\rightarrow\infty}u_{n_{k}}=u\quad\text{in}\quad\mathcal{S}'\quad\text{and}\quad\|u\|_{\dot{B}^{s_{1}}\cap \dot{B}^{s_{2}}}\lesssim \liminf_{n_{k}\rightarrow \infty} \|u_{n_{k}}\|_{\dot{B}^{s_{1}}\cap \dot{B}^{s_{2}}}.
    \end{aligned}
    \end{equation}
\end{itemize}
\end{lemma}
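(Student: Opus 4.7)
The plan is to verify each of the four bulleted properties, noting that they are standard facts from Littlewood--Paley theory that can all be reduced to Plancherel, Bernstein, and telescoping arguments on the homogeneous dyadic decomposition $u=\sum_{j\in\Z}\dot{\Delta}_j u$.

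For the first bullet, I would write
\[
\|u\|_{\dot{H}^s}^2=\sum_{j,k}\int|\xi|^{2s}\widehat{\dot{\Delta}_j u}\,\overline{\widehat{\dot{\Delta}_k u}}\,d\xi,
\]
observe that by Fourier support the sum is essentially diagonal (only $|j-k|\le 1$ contribute), and bound each piece using $|\xi|\sim 2^j$ on the support of $\widehat{\dot{\Delta}_j u}$. The Cauchy--Schwarz inequality in $j$ together with $\ell^1\hookrightarrow\ell^2$ then yields $\|u\|_{\dot H^s}\lesssim\|u\|_{\dot B^s}$. For the Sobolev-type embedding $\dot B^{d/2-d/q}\hookrightarrow L^q$, I would invoke Bernstein (Lemma~\ref{lemma61}) to write $\|\dot{\Delta}_j u\|_{L^q}\lesssim 2^{j(d/2-d/q)}\|\dot{\Delta}_j u\|_{L^2}$, sum over $j$, and conclude by triangle inequality in $L^q$ (valid since $q\ge 2$).

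For the second bullet, Bernstein gives $\|\dot{\Delta}_j u\|_{L^\infty}\lesssim 2^{jd/2}\|\dot{\Delta}_j u\|_{L^2}$, so $\sum_j\|\dot{\Delta}_j u\|_{L^\infty}\lesssim\|u\|_{\dot B^{d/2}}<\infty$. Since each $\dot{\Delta}_j u$ is a Schwartz function (in particular continuous, decaying at infinity), the normally convergent series defines a continuous function vanishing at infinity. The third bullet follows from the identity $\widehat{\Lambda^\sigma u}(\xi)=|\xi|^\sigma\widehat u(\xi)$: since $\dot{\Delta}_j\Lambda^\sigma u$ has Fourier support in $\{|\xi|\sim 2^j\}$ and Bernstein gives $\|\dot{\Delta}_j\Lambda^\sigma u\|_{L^2}\sim 2^{j\sigma}\|\dot{\Delta}_j u\|_{L^2}$ (which is rigorous once one truncates via the localizer $\tilde\varphi_j$ equal to $1$ on the support of $\varphi_j$), the identification of norms is immediate, and the inverse is $\Lambda^{-\sigma}$.

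The main obstacle lies in the fourth bullet: completeness of $\dot B^{s_1}\cap\dot B^{s_2}$ and the Fatou property. The subtlety is that $\dot B^{s}$ is only a Banach space under a condition like $s<d/2$ (or $s=d/2$ with $r=1$), because otherwise the low-frequency series $\sum_{j\le 0}\dot{\Delta}_j u$ need not converge in $\mathcal{S}'_h$. Here the hypothesis $s_2\le d/2$ ensures the low frequencies can be controlled: writing $u^\ell:=\sum_{j\le 0}\dot{\Delta}_j u$, Bernstein yields $\|\dot{\Delta}_j u\|_{L^\infty}\lesssim 2^{j(d/2-s_2)}\|u\|_{\dot B^{s_2}}$ for $j\le 0$, so the low-frequency part converges normally in $L^\infty$ when $s_2<d/2$ (respectively in $L^2$ when $s_2=d/2$ with a Cauchy--Schwarz twist), allowing the reconstruction $u=\sum_j\dot{\Delta}_j u$ in $\mathcal S'_h$. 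For the Fatou property, given a uniformly bounded sequence $\{u_k\}\subset\dot B^{s_1}\cap\dot B^{s_2}$, one extracts by a diagonal argument a subsequence $u_{n_k}$ such that each dyadic block $\dot{\Delta}_j u_{n_k}$ converges weakly in $L^2$ to some $v_j$; the above low-frequency control guarantees that $u:=\sum_j v_j$ is a well-defined element of $\mathcal S'_h$, and the weak lower semicontinuity of the $L^2$ norm together with Fatou's lemma for series yields $\|u\|_{\dot B^{s_1}\cap\dot B^{s_2}}\le\liminf_{k\to\infty}\|u_{n_k}\|_{\dot B^{s_1}\cap\dot B^{s_2}}$. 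Completeness then follows from the Fatou property applied to a Cauchy sequence. All these arguments are carried out in \cite[Chapters 2--3]{HJR}, so the proof essentially reduces to citing that monograph while pointing out where the condition $s_2\le d/2$ enters.
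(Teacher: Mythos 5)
Your proposal is correct and matches the paper's treatment: the paper offers no proof of Lemma \ref{lemma62}, simply citing \cite[Chapters 2--3]{HJR}, and your sketch is the standard Littlewood--Paley argument found there, correctly isolating where the hypothesis $s_2\le \frac{d}{2}$ enters in the fourth item. (One cosmetic slip: the blocks $\dot{\Delta}_j u$ are not Schwartz functions in general, but as band-limited $L^2$ functions they are continuous and vanish at infinity by Riemann--Lebesgue, which is all your normal-convergence argument needs.)
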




The following Morse-type estimates play a fundamental role in the nonlinear analysis. 
\begin{lemma}\label{lemma63}
Let $d\ge1$ be the dimension. The following statements hold:
\begin{itemize}
\item{} Let $s>0$. Then $\dot{B}^{s}\cap L^{\infty}$ is a algebra and
    \begin{equation}\label{uv1}
\begin{aligned}
\|uv\|_{\dot{B}^{s}}\lesssim \|u\|_{L^{\infty}}\|v\|_{\dot{B}^{s} }+ \|v\|_{L^{\infty}}\|u\|_{\dot{B}^{s} }.
\end{aligned}
\end{equation}
\item{}  Let $s_{1}, s_{2}$ satisfy $s_{1}, s_{2}\leq \frac{d}{2}$ and $s_{1}+s_{2}>0$. Then there holds
\begin{equation}\label{uv2}
\begin{aligned}
&\|uv\|_{\dot{B}^{s_{1}+s_{2}-\frac{d}{2}}}\lesssim \|u\|_{\dot{B}^{s_{1}}
}\|v\|_{\dot{B}^{s_{2}}}.
\end{aligned}
\end{equation}
\end{itemize}
\end{lemma}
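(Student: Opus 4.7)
The plan is to reduce both estimates to Bony's paraproduct decomposition
\[
uv = T_u v + T_v u + R(u,v),
\]
with $T_u v := \sum_{j} S_{j-1}u\, \dot\Delta_j v$ and $R(u,v) := \sum_{j}\sum_{|j-k|\le 1} \dot\Delta_j u\, \dot\Delta_k v$. The key spectral facts are that each summand $S_{j-1}u\,\dot\Delta_j v$ is frequency-localized in an annulus $\sim 2^j$, whereas $\dot\Delta_j u\,\dot\Delta_k v$ (with $|j-k|\le 1$) is localized in a ball of radius $\sim 2^j$. These localizations dictate which piece needs a \emph{positive} Besov index (the remainder) and which tolerates any index (the paraproducts).

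For part (i), the paraproducts are treated by L. Meyer's classical bound: since $S_{j-1}u\,\dot\Delta_j v$ lives in an annulus of size $2^j$, applying $\dot\Delta_k$ kills all but finitely many terms with $|k-j|\le N_0$, and Hölder gives
\[
\|\dot\Delta_k(T_u v)\|_{L^2} \lesssim \|u\|_{L^\infty} \sum_{|k-j|\le N_0} \|\dot\Delta_j v\|_{L^2},
\]
whose weighted $\ell^1$ sum in $k$ yields $\|T_u v\|_{\dot B^s}\lesssim \|u\|_{L^\infty}\|v\|_{\dot B^s}$ for any $s\in\mathbb{R}$. The symmetric bound covers $T_v u$. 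For the remainder, ball-localization forces $\dot\Delta_k R(u,v)$ to collect all $j\ge k - N_0$, so
\[
2^{ks}\|\dot\Delta_k R(u,v)\|_{L^2} \lesssim \|u\|_{L^\infty} \sum_{j\ge k-N_0} 2^{(k-j)s}\, 2^{js}\|\tilde{\dot\Delta}_j v\|_{L^2},
\]
and Young's inequality for discrete convolution closes the sum \emph{precisely} because $s>0$ makes $\{2^{(k-j)s}\mathbf{1}_{j\ge k-N_0}\}$ summable in $k-j$. This proves (i).

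For part (ii) the same decomposition is used but an extra Bernstein step converts the $L^\infty$ norm in the paraproducts into an $L^2$ norm with a gain of $2^{jd/2}$:
\[
\|S_{j-1}u\|_{L^\infty} \lesssim \sum_{j'\le j-2} 2^{j'd/2}\|\dot\Delta_{j'} u\|_{L^2} \lesssim 2^{j(d/2 - s_1)}\|u\|_{\dot B^{s_1}},
\]
which is summable precisely under $s_1\le d/2$ (the endpoint $s_1=d/2$ needs the sharp embedding $\dot B^{d/2}\hookrightarrow L^\infty$ of Lemma~\ref{lemma62} read inside the $\ell^1$-in-$j$ summation, where it holds in the Chemin–Lerner style we are using). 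Combining this with $2^{js_2}\|\dot\Delta_j v\|_{L^2}\in \ell^1$ gives
\[
\|T_u v\|_{\dot B^{s_1+s_2-d/2}} \lesssim \|u\|_{\dot B^{s_1}}\|v\|_{\dot B^{s_2}},
\]
with the symmetric estimate for $T_v u$. For the remainder, the same geometric summation as in (i) now yields convergence exactly when $s_1+s_2-d/2 + d/2 = s_1+s_2>0$, which is the stated hypothesis.

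The only delicate point is the endpoint $s_i = d/2$, where the naive Bernstein sum diverges logarithmically; I would bypass this by working directly with the dyadic blocks and appealing to the $\ell^1$ summability encoded in the definition of $\dot B^{d/2}$ (as opposed to $\dot B^{d/2}_{2,\infty}$), exactly as in \cite[Chapter~2]{HJR}. Assuming these standard paraproduct continuity estimates, both inequalities follow by a final triangle-inequality combination of the three pieces.
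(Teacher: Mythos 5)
The paper does not prove Lemma \ref{lemma63} at all: it is recalled verbatim from \cite[Chapters 2--3]{HJR}, and the proof given there is exactly the Bony paraproduct--remainder argument you carry out. Your proposal is correct, including the two points that actually need care — the hypothesis $s>0$ (resp.\ $s_1+s_2>0$) entering only through the Young/geometric summation for the remainder term, and the endpoint $s_i=\frac d2$ being rescued by the $\ell^1$-in-$j$ summation built into the paper's convention $\dot B^s=\dot B^s_{2,1}$ rather than by a bare Bernstein sum. Nothing further is needed.
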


Next, we present a commutator estimate that is used to control nonlinear terms in medium and high frequencies.

\begin{lemma}\label{lemmacom}
For any $d\ge1$, let $s\in (-\frac{d}{2}-1, \frac{d}{2}+1]$. Then it holds
\begin{align}
&\sum_{j\in\mathbb{Z}}2^{js}\|[u,\dot{\Delta}_{j}]\partial_{x_{i}}v\|_{L^{2}}\lesssim\|\nabla u\|_{\dot{B}^{\frac{3}{2}} }\|v\|_{\dot{B}^{s}},\quad\quad i=1,2,...d.\label{commutator}
\end{align}
\end{lemma}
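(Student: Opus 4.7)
I would follow the classical route for commutator estimates in Besov spaces using Bony's paraproduct decomposition, as developed in \cite[Chapter 2]{HJR}. Write $u\,\partial_{x_i}v = T_u\partial_{x_i}v + T_{\partial_{x_i}v}u + R(u,\partial_{x_i}v)$, where $T$ and $R$ denote the paraproduct and remainder operators. Applying $\dot{\Delta}_j$ and rearranging, the commutator splits into five pieces:
\[
[u,\dot{\Delta}_j]\partial_{x_i}v
=\bigl[T_u,\dot{\Delta}_j\bigr]\partial_{x_i}v
+T_{\dot{\Delta}_j\partial_{x_i}v}u
-\dot{\Delta}_j T_{\partial_{x_i}v}u
+R(u,\dot{\Delta}_j\partial_{x_i}v)
-\dot{\Delta}_j R(u,\partial_{x_i}v).
\]
Only the first piece exhibits a genuine commutator cancellation; the remaining four are estimated by standard product laws, and the interplay between these bounds is what dictates the range $s\in(-\tfrac{d}{2}-1,\tfrac{d}{2}+1]$.

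For the leading term, I would use the representation $\dot{\Delta}_j f = 2^{jd}h(2^j\cdot)\ast f$ for some Schwartz $h$, so that
\[
\bigl[S_{k-1}u,\dot{\Delta}_j\bigr]\partial_{x_i}\dot{\Delta}_k v(x)
= 2^{jd}\!\!\int h(2^j(x-y))\bigl(S_{k-1}u(x)-S_{k-1}u(y)\bigr)\,\partial_{x_i}\dot{\Delta}_k v(y)\,dy.
\]
The mean-value inequality $|S_{k-1}u(x)-S_{k-1}u(y)|\leq \|\nabla u\|_{L^\infty}|x-y|$ absorbs the factor $2^j$ produced by $\partial_{x_i}$ through the standard maximal-function argument, yielding $\|[S_{k-1}u,\dot{\Delta}_j]\partial_{x_i}\dot{\Delta}_k v\|_{L^2}\lesssim \|\nabla u\|_{L^\infty}\|\dot{\Delta}_k v\|_{L^2}$. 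Since the paraproduct forces $|k-j|\leq N_0$, multiplying by $2^{js}$ and summing gives a contribution controlled by $\|\nabla u\|_{L^\infty}\|v\|_{\dot{B}^{s}}$, and then by $\|\nabla u\|_{\dot{B}^{d/2}}\|v\|_{\dot{B}^{s}}$ via the embedding $\dot{B}^{d/2}\hookrightarrow L^\infty$. This piece tolerates any $s\in\mathbb{R}$.

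The two paraproduct terms $T_{\dot{\Delta}_j\partial_{x_i}v}u$ and $\dot{\Delta}_j T_{\partial_{x_i}v}u$ are localized at frequencies $\gtrsim 2^j$ in $u$; writing $\|\dot{\Delta}_k u\|_{L^2}\lesssim 2^{-k(d/2+1)}c_k\|\nabla u\|_{\dot{B}^{d/2}}$ with $(c_k)\in\ell^1$ and using $\|\dot{\Delta}_j\partial_{x_i}v\|_{L^\infty}\lesssim 2^{j(d/2+1)}\|\dot{\Delta}_j v\|_{L^2}$, a direct geometric-sum computation in $k\geq j-N_0$ yields, after multiplication by $2^{js}$ and summation in $j$, a bound by $\|\nabla u\|_{\dot{B}^{d/2}}\|v\|_{\dot{B}^{s}}$ as soon as $s\leq \tfrac{d}{2}+1$ (the endpoint being handled by a discrete Young convolution inequality $\ell^1\ast\ell^1\to\ell^1$). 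The two remainder terms $R(u,\dot{\Delta}_j\partial_{x_i}v)$ and $\dot{\Delta}_j R(u,\partial_{x_i}v)$ are sums over $|k-k'|\leq 1$ of products of two dyadic blocks with comparable frequencies; combining Bernstein's inequality with the decay of $\|\dot{\Delta}_k u\|_{L^2}$ and summing in $k\geq j-N_0$, the requirement $s+\tfrac{d}{2}+1>0$ emerges as the condition guaranteeing summability, which is precisely the lower bound $s>-\tfrac{d}{2}-1$.

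The main obstacle will be the bookkeeping at the endpoint $s=\tfrac{d}{2}+1$: the paraproduct piece $T_{\dot{\Delta}_j\partial_{x_i}v}u$ produces a borderline convolution $\sum_{k\geq j}c_k$, and one must carefully use that $(c_k)\in\ell^1$ (rather than merely $\ell^\infty$) to close the $\ell^1$-sum in $j$ after weighting by $2^{js}$. All other steps are routine once the decomposition is set up.
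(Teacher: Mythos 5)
This lemma is not proved in the paper at all: it is the classical commutator estimate quoted from the cited reference \cite[Chapter 2]{HJR} (essentially Lemma 2.100 there), so your Bony-decomposition route is exactly the standard one, and your treatment of the genuine commutator piece $[T_u,\dot{\Delta}_j]\partial_{x_i}v$ via the kernel representation, the first-order Taylor expansion and the embedding $\dot{B}^{d/2}\hookrightarrow L^\infty$ (note the paper's $\dot B^{3/2}$ is $\dot B^{d/2}$ for $d=3$) is correct. Two of your attributions, however, do not check out. The minor one: writing $c_k:=2^{kd/2}\|\dot{\Delta}_k\nabla u\|_{L^2}/\|\nabla u\|_{\dot B^{d/2}}$ and $d_k:=2^{ks}\|\dot{\Delta}_kv\|_{L^2}/\|v\|_{\dot B^{s}}$, the term $T_{\dot{\Delta}_j\partial_{x_i}v}u$ is harmless for \emph{every} $s\in\mathbb{R}$, since $\|S_{k-1}\dot{\Delta}_j\partial_{x_i}v\|_{L^\infty}\lesssim 2^{j(1+d/2)}\|\dot{\Delta}_jv\|_{L^2}$ and $\|\dot{\Delta}_ku\|_{L^2}\lesssim 2^{-k(1+d/2)}c_k\|\nabla u\|_{\dot B^{d/2}}$ give the unconditionally convergent factor $\sum_{k\geq j-N_0}2^{(j-k)(1+d/2)}c_k\lesssim 1$. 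The constraint $s\leq\frac d2+1$ comes solely from $\dot{\Delta}_jT_{\partial_{x_i}v}u$, where the borderline sum at the endpoint is $\sum_{k'\leq k}2^{(k'-k)(\frac d2+1-s)}d_{k'}$ over the \emph{low} frequencies of $v$ inside $S_{k-1}\partial_{x_i}v$; it is the $\ell^1$ summability of $(d_{k'})$, not of $(c_k)$, that saves the endpoint.

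The substantive gap is in the remainder. Estimating $\dot{\Delta}_jR(u,\partial_{x_i}v)=\sum_{k\geq j-N_0}\dot{\Delta}_j(\dot{\Delta}_ku\,\widetilde{\Delta}_k\partial_{x_i}v)$ by Bernstein ($L^1\to L^2$ at level $j$) together with $\|\dot{\Delta}_ku\|_{L^2}\lesssim 2^{-k(1+d/2)}c_k\|\nabla u\|_{\dot B^{d/2}}$ and $\|\widetilde{\Delta}_k\partial_{x_i}v\|_{L^2}\lesssim 2^{k(1-s)}d_k\|v\|_{\dot B^{s}}$ produces the convolution kernel $2^{(j-k)(s+\frac d2)}$ over $j\leq k+N_0$, which is summable only for $s>-\frac d2$, not $s>-\frac d2-1$. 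Moving the derivative, $R(u,\partial_{x_i}v)=\partial_{x_i}R(u,v)-R(\partial_{x_i}u,v)$, repairs the first piece (kernel $2^{(j-k)(s+\frac d2+1)}$), but the leftover $R(\partial_{x_i}u,v)$ is a plain remainder of $\dot B^{d/2}\times\dot B^{s}$ and again needs $s+\frac d2>0$; no further cancellation is available since $\int\dot{\Delta}_ku\,\widetilde{\Delta}_k\partial_{x_i}v\,dx=-\int\partial_{x_i}\dot{\Delta}_ku\,\widetilde{\Delta}_kv\,dx$ does not vanish in general. The extra $-1$ in the classical lower bound is tied to the contracted transport commutator $[v\cdot\nabla,\dot{\Delta}_j]$ with divergence structure; for a single $\partial_{x_i}$ and a general $u$ your argument as written only reaches $s>-\frac d2$. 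This is immaterial for the paper, which only invokes the lemma with $s\in\{\frac12,\frac32,\frac52\}$ in $d=3$, but your plan does not close the range $-\frac d2-1<s\leq-\frac d2$ and should either restrict the range or supply the missing argument there.
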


Also, we recall estimates for the composition of functions.
\begin{lemma}\label{lemma64}
Let $s>0$, and $F:I\rightarrow\mathbb{R}$ with $I$ being an open interval of $\mathbb{R}$. Assume that $F(0)=0$ and that $F'$ is smooth on $I$. Let $u,v\in \dot{B}^{s}\cap L^{\infty}$ have value in $I$. There exists a constant $C=C(F',s,d,I)$ such that
\begin{equation}
\begin{aligned}
\|F(f)\|_{\dot{B}^{s}}\leq C(1+\|f\|_{L^{\infty}})^{[s]+1}\|f\|_{\dot{B}^{s}}.\label{F1}
\end{aligned}
\end{equation}
and
\begin{equation}
\begin{aligned}
& \|F(f_{1})-F(f_{2})\|_{\dot{B}^{s}}\\
&\quad\leq F'(0)\|f_{1}-f_{2}\|_{\dot{B}^{s}}\\
&\quad\quad+C(1+\|(f_{1},f_{2})\|_{L^{\infty}})^{[s]+1}\Big(\|f_{1}-f_{2}\|_{\dot{B}^{s}}\|(f_{1},f_{2})\|_{L^{\infty}}+\|f_{1}-f_{2}\|_{L^{\infty}}\|(f_{1},f_{2})\|_{\dot{B}^{s}}\Big).\label{F3}
\end{aligned}
\end{equation}
\end{lemma}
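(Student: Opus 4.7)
The plan is to establish both inequalities via Bony's paradifferential calculus combined with Meyer's telescoping trick, which is the standard route for composition estimates in homogeneous Besov spaces. Since this lemma is classical (it appears in Bahouri--Chemin--Danchin \cite{HJR}), I will sketch the structure rather than prove from scratch; the novelty here is merely to collect the statement with the sharp constants needed in the main paper.

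For the first estimate \eqref{F1}, I would proceed as follows. Using $F(0)=0$ and the fact that $S_jf\to 0$ in $L^\infty$ as $j\to-\infty$ (which holds for $f\in \dot B^s\cap L^\infty$ once one suitably interprets convergence modulo polynomials in $\mathcal{S}_h'$), write the telescoping identity
\[
F(f)=\sum_{j\in\mathbb{Z}}\big(F(S_{j+1}f)-F(S_jf)\big)=\sum_{j\in\mathbb{Z}} m_j\,\dot\Delta_j f,
\qquad m_j:=\int_0^1 F'(S_jf+\tau\dot\Delta_jf)\,d\tau .
\]
Since $S_jf$ and $\dot\Delta_jf$ are pointwise bounded by $C\|f\|_{L^\infty}$ and $F'$ is smooth on $I$, one has $\|m_j\|_{L^\infty}\le C_{F',I}(1+\|f\|_{L^\infty})^{[s]+1}$ as well as the higher-order derivative bounds $\|\partial^\alpha m_j\|_{L^\infty}\lesssim 2^{|\alpha|j}(1+\|f\|_{L^\infty})^{[s]+1}$ using Bernstein's inequality applied to $S_jf,\dot\Delta_jf$. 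Apply $\dot\Delta_k$ to the sum, use almost-orthogonality (only $|j-k|\le N_0$ contribute nontrivially after accounting for the spectral localization of $m_j\dot\Delta_jf$), and sum with the weight $2^{ks}$ to conclude \eqref{F1}.

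For the second estimate \eqref{F3}, I would write
\[
F(f_1)-F(f_2)=(f_1-f_2)\int_0^1 F'\big(f_2+\tau(f_1-f_2)\big)\,d\tau
= F'(0)(f_1-f_2)+ G(f_1,f_2)\,(f_1-f_2),
\]
where $G(f_1,f_2):=\int_0^1\big[F'(f_2+\tau(f_1-f_2))-F'(0)\big]\,d\tau$. Since $\tilde F(y):=F'(y)-F'(0)$ satisfies $\tilde F(0)=0$ with $\tilde F'$ smooth on $I$, the already-proved estimate \eqref{F1} applied to $\tilde F$ and to the function $f_2+\tau(f_1-f_2)$ (uniformly in $\tau$) yields
\[
\|G(f_1,f_2)\|_{\dot B^s}\le C(1+\|(f_1,f_2)\|_{L^\infty})^{[s]+1}\|(f_1,f_2)\|_{\dot B^s},
\]
together with the pointwise bound $\|G(f_1,f_2)\|_{L^\infty}\le C(1+\|(f_1,f_2)\|_{L^\infty})^{[s]+1}\|(f_1,f_2)\|_{L^\infty}$ obtained directly from the fundamental theorem of calculus for $\tilde F$. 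Finally, the product law \eqref{uv1} applied to $G(f_1,f_2)\cdot(f_1-f_2)$ produces exactly the two cross-terms on the right-hand side of \eqref{F3}, and the linear piece $F'(0)(f_1-f_2)$ contributes the leading term.

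The main technical obstacle is the first step: making the Meyer telescoping identity rigorous in the homogeneous setting (where $S_jf$ does not converge to $0$ in $\mathcal{S}'$ but only modulo polynomials) and controlling the spectral support of the products $m_j\dot\Delta_jf$ so that the almost-orthogonality argument goes through with the exponent $[s]+1$ in the polynomial factor. Once this is done, the second inequality follows essentially by bookkeeping, with no further analytic difficulty.
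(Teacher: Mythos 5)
The paper states this lemma without proof, merely recalling it from \cite[Chapters 2--3]{HJR}, and your sketch reproduces the standard argument from that reference: Meyer's first linearization $F(f)=\sum_j m_j\,\dot{\Delta}_j f$ for \eqref{F1}, and then \eqref{F1} applied to $F'-F'(0)$ combined with the product law \eqref{uv1} for \eqref{F3}, the latter reduction being carried out correctly (including the observation that the convex combination $f_2+\tau(f_1-f_2)$ stays in $I$). One caveat on wording: the products $m_j\dot{\Delta}_jf$ are \emph{not} spectrally localized, so the conclusion is not an almost-orthogonality statement restricted to $|k-j|\le N_0$; the correct mechanism is the Meyer multiplier lemma, where the derivative bounds $\|\partial^{\alpha}m_j\|_{L^{\infty}}\lesssim 2^{|\alpha|j}$ for $|\alpha|\le [s]+1$ are used to make \emph{all} off-diagonal blocks $\dot{\Delta}_k\big(m_j\dot{\Delta}_jf\big)$ summable, and this is precisely where the restriction $0<s<[s]+1$ and the exponent $[s]+1$ in the constant enter.
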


In order to control the nonlinear term $\Phi(n)$ in \eqref{EM1}, we need the following results concerning the composition of quadratic functions. The proof can be found in \cite{c2}.

\begin{lemma}\label{compositionlp} Let $s>0$, $J$ be a given integer, and $F:I\rightarrow\mathbb{R}$ be smooth with $I$ being an open interval of $\mathbb{R}$. Then there exists a constant $C=C(s,p,r,d,I,F'')$ such that, for $\sigma\geq 0$,
\begin{equation}
\begin{aligned}
&\sum_{j\leq J} 2^{js}\|\dot{\Delta}_{j}(F(f)-F(0)-F'(0)f)\|_{L^2}\\
&\quad\leq C(1+\|f\|_{L^{\infty}})^{[s]+1} \|f\|_{L^{\infty}}\Big{(} \sum_{j\leq J} 2^{js}\|\dot{\Delta}_{j}f\|_{L^2}+2^{J(s-\sigma)} \sum_{j\geq  J-1}2^{j\sigma}\|\dot{\Delta}_{j}f\|_{L^2}\Big{)},\label{q1}
\end{aligned}
\end{equation}
and for any $\sigma\in\mathbb{R}$ that
\begin{equation}
\begin{aligned}
&\sum_{j\geq J-1} 2^{js}\|\dot{\Delta}_{j}(F(f)-F(0)-F'(0)f)\|_{L^2}\\
&\quad\leq C(1+\|f\|_{L^{\infty}})^{[s]+1}\|f\|_{L^{\infty}}\Big{(} 2^{J(s-\sigma)}\sum_{j\leq J} 2^{j\sigma}\|\dot{\Delta}_{j}f\|_{L^2}+\sum_{j\geq  J-1}2^{js}\|\dot{\Delta}_{j}f\|_{L^2}\Big{)}.\label{q2}
\end{aligned}
\end{equation}

\end{lemma}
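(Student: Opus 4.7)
The plan is to reduce the composition estimate to a product estimate via Taylor's formula, then apply Bony's paraproduct decomposition together with a hybrid composition bound for smooth functions vanishing at the origin.

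\textbf{Step 1 (Taylor reduction).} By integration with respect to the parameter, write
\begin{equation*}
F(f)-F(0)-F'(0)f = f\cdot \tilde H(f), \qquad \tilde H(f):=\int_{0}^{1}\bigl(F'(\tau f)-F'(0)\bigr)\,d\tau.
\end{equation*}
Then $\tilde H$ is smooth, $\tilde H(0)=0$, and $\|\tilde H(f)\|_{L^\infty}\leq \|F''\|_{L^\infty(I)}\|f\|_{L^\infty}$. Thus the problem reduces to bounding the product $f\cdot \tilde H(f)$ in the two hybrid norms.

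\textbf{Step 2 (Bony decomposition).} Split
\begin{equation*}
f\cdot\tilde H(f) = T_f\tilde H(f) + T_{\tilde H(f)}f + R\bigl(f,\tilde H(f)\bigr).
\end{equation*}
For the piece $T_{\tilde H(f)}f$, spectral localization forces $\dot\Delta_j T_{\tilde H(f)}f$ to depend on $f$ only at frequencies $\sim 2^j$, giving
\begin{equation*}
\sum_{j\leq J}2^{js}\|\dot\Delta_j T_{\tilde H(f)}f\|_{L^2}\lesssim \|\tilde H(f)\|_{L^\infty}\sum_{j\leq J+N}2^{js}\|\dot\Delta_j f\|_{L^2},
\end{equation*}
with $N$ a fixed integer from the paraproduct truncation, and a tail of at most $N$ dyadic blocks (each controlled by a Bernstein/\eqref{bernstein}-type penalty if needed). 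Analogously $T_f\tilde H(f)$ produces $\|f\|_{L^\infty}\sum_{j\leq J+N}2^{js}\|\dot\Delta_j \tilde H(f)\|_{L^2}$, and $R(f,\tilde H(f))$ yields a sum over $k\geq j-3$ that, by Hölder in $L^2\times L^\infty$ and the spectral support of the summands, is dominated by the same types of pieces.

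\textbf{Step 3 (Hybrid bound for $\tilde H(f)$).} Split $f=f^{\ell,J}+f^{h,J}$ with $f^{\ell,J}:=\sum_{k\leq J-1}\dot\Delta_k f$ and write
\begin{equation*}
\tilde H(f) = \tilde H(f^{\ell,J}) + f^{h,J}\int_0^1 \tilde H'\bigl(f^{\ell,J}+\tau f^{h,J}\bigr)\,d\tau.
\end{equation*}
The first term is handled by the classical composition Lemma~\ref{lemma64}, giving a Besov norm bounded by $C(\|f\|_{L^\infty})$ times the low-frequency Besov norm of $f$. The second term is a product of $f^{h,J}$ with an $L^\infty$ function bounded by $C(\|f\|_{L^\infty})$; its low-frequency contribution is converted via the Bernstein inequalities in \eqref{bernstein} into $2^{J(s-\sigma)}\sum_{j\geq J-1}2^{j\sigma}\|\dot\Delta_j f\|_{L^2}$, which is exactly the penalized high-frequency term appearing in \eqref{q1}. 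Plugging this hybrid bound into the estimate of $T_f\tilde H(f)$ from Step 2 closes the argument for \eqref{q1}.

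\textbf{Step 4 (High-frequency side, \eqref{q2}).} The argument is symmetric: bound $\sum_{j\geq J-1}2^{js}\|\dot\Delta_j(f\tilde H(f))\|_{L^2}$ via Bony, with the roles of low and high frequencies of $f$ swapped. The low-frequency contribution of $f$ is converted to the $s$-weighted sum through the penalty factor $2^{J(s-\sigma)}$, which is now allowed for any $\sigma\in\mathbb{R}$ because $f^{\ell,J}$ is spectrally localized in frequencies $\lesssim 2^J$ and Bernstein's inequalities work in both directions on such functions.

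\textbf{Main obstacle.} The technical heart lies in Step 3: obtaining a \emph{hybrid} composition bound in which the low-frequency norm of $\tilde H(f)$ is controlled by the low-frequency norm of $f$ plus a properly scaled high-frequency penalty. The decomposition $\tilde H(f)=\tilde H(f^{\ell,J})+[\tilde H(f)-\tilde H(f^{\ell,J})]$ together with the Taylor remainder representation of the difference and Bernstein's inequalities is what generates the correct $2^{J(s-\sigma)}$ prefactor and is the place where the sign condition $\sigma\geq 0$ in \eqref{q1} is used (to pass the penalty to the high-frequency side without losing coercivity), while in \eqref{q2} any $\sigma$ is admissible because the penalty accompanies the low frequencies, which are smooth.
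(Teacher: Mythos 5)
First, a point of reference: the paper does not prove Lemma \ref{compositionlp} at all — it is quoted from \cite{c2} — so your proposal has to be judged on its own merits rather than against an in-paper argument. Your Step 1 reduction $F(f)-F(0)-F'(0)f=f\,\tilde H(f)$ is correct, and Steps 2–3 do deliver the low-frequency estimate \eqref{q1}: every non-spectrally-localized piece there (the Bony remainder, and the contribution of $f^{h,J}\int_0^1\tilde H'(f^{\ell,J}+\tau f^{h,J})\,d\tau$ in Step 3) can be absorbed by the crude bound $\|\dot\Delta_k(uv)\|_{L^2}\le\|u\|_{L^2}\|v\|_{L^\infty}$ because the weight $\sum_{k\le J}2^{ks}$ is summable for $s>0$, and $\|f^{h,J}\|_{L^2}\le 2^{-J\sigma}\sum_{j\ge J}2^{j\sigma}\|\dot\Delta_jf\|_{L^2}$ (this is where $\sigma\ge0$ enters) produces exactly the penalty $2^{J(s-\sigma)}\sum_{j\ge J-1}2^{j\sigma}\|\dot\Delta_jf\|_{L^2}$. (Two small repairs: estimate the remainder as $\|\widetilde{\dot\Delta}_kf\|_{L^2}\|\dot\Delta_k\tilde H(f)\|_{L^\infty}$ rather than the other way, so you do not have to compose again; and note that $f^{\ell,J}=\dot S_Jf$ may leave $I$, a standard technicality.)

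The genuine gap is Step 4: the high-frequency estimate \eqref{q2} is \emph{not} symmetric to \eqref{q1}. First, $\sum_{j\ge J-1}2^{js}$ diverges, so the crude $L^2\times L^\infty$ device that rescued the non-localized product in Step 3 has no analogue: after swapping roles you must bound $\sum_{k\ge J-1}2^{ks}\|\dot\Delta_k(f^{\ell,J}g)\|_{L^2}$ with $g=\int_0^1\tilde H'(f^{h,J}+\tau f^{\ell,J})\,d\tau$ merely bounded, and this sum is not controllable because the \emph{product} $f^{\ell,J}g$ is not spectrally supported in $|\xi|\lesssim 2^J$ (only $f^{\ell,J}$ is); the appeal to "Bernstein in both directions" is therefore not legitimate. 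Second, if you try to repair this by running Bony on $f^{\ell,J}g$, the paraproduct $T_{f^{\ell,J}}g$ forces you to estimate $\sum_{m\ge J}2^{ms}\|\dot\Delta_m g\|_{L^2}$, i.e.\ a high-frequency composition estimate with a low-frequency penalty for the smooth function $\tilde H'$ — which is the (linear version of the) very statement \eqref{q2} you are proving, so the recursion never terminates. The standard escape is not Bony but Meyer's telescoping paralinearization $F(f)-F(0)-F'(0)f=\sum_k(m_k-F'(0))\dot\Delta_kf$ with $m_k=\int_0^1F'(\dot S_kf+\tau\dot\Delta_kf)\,d\tau$, where the decay of $\|\dot\Delta_j(m_k\dot\Delta_kf)\|_{L^2}$ for $j\gg k$ comes from $\|D^Mm_k\|_{L^\infty}\lesssim 2^{kM}C(\|f\|_{L^\infty})$ with $M>s$; splitting the $k$-sum at $J$ then gives \eqref{q1} and \eqref{q2} simultaneously. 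Finally, observe that for $\sigma\le s$ the bound \eqref{q2} already follows from the global estimate \eqref{F1} applied to $f\tilde H(f)$ together with $2^{js}\le 2^{J(s-\sigma)}2^{j\sigma}$ for $j\le J$; the only genuinely hard regime is $\sigma>s$, and that is precisely the regime your symmetric construction does not reach.
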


\begin{lemma}\label{lemmaL1L2}
Let $T>0$ be given time, $E_{1}(t), E_{2}(t)$ and $E_{3}(t)$ be three absolutely continuous nonnegative functions on $[0,T)$. Suppose that there exists a functional $\mathcal{L}(t)\sim E_{1}^2(t)+E_{2}^2(t)+E^2_{3}(t)$ such that
\begin{equation}
\begin{aligned}
&\frac{d}{dt}\mathcal{L}(t)+a_{1}E^2_{1}(t)+a_{2} E^2_{2}(t)+a_{3}E^2_{3}(t)\leq Cg_{1}(t)\sqrt{\mathcal{L}(t)}+Cg_{2}(t)E_{1}(t),\quad\quad t\in (0,T),\label{decayineq1}
\end{aligned}
\end{equation}
where $a_{1}$, $a_{2}, a_{3}$ are strictly positive constants. Then, there exists a constant $C>0$ independent of $T$ and $a_{1}$, $a_{2}, a_{3}$ such that
    if $g_{1}(t)\in L^1(0,T)$ and $g_{2}(t)\in L^2(0,T)$, then we have
    \begin{equation}\label{L2time}
    \begin{aligned}
    &\sup_{t\in[0,T]}( E_{1}(t)+E_{2}(t)+E_{3}(t))\\
    &\quad\quad+\sqrt{a_{1}}\|E_{1}\|_{L^2(0,T)}+\sqrt{a_{2}}\|E_{2}\|_{L^2(0,T)}+\sqrt{a_{3}}\|E_{3}\|_{L^2(0,T)}\\
    &\quad\leq C (E_{1}(0)+E_{2}(0)+E_{3}(0))+C\|g_{1}\|_{L^1(0,T)}+\frac{C}{\sqrt{a_{1}}}\|g_{2}\|_{L^2(0,T)}.
    \end{aligned}
    \end{equation}
\end{lemma}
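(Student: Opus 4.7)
\textbf{Proof plan for Lemma \ref{lemmaL1L2}.} The strategy is to absorb the two source terms by different mechanisms (Young's inequality against $E_1^2$ for the $g_2$--term, Young's inequality against $\sup\mathcal{L}$ for the $g_1$--term) and then to convert the resulting bound on $\mathcal{L}$ into a bound on $E_1,E_2,E_3$ via the equivalence $\mathcal{L}(t)\sim E_1^2(t)+E_2^2(t)+E_3^2(t)$. No differential inequality for $\sqrt{\mathcal{L}}$ is needed, which avoids the technical issue at points where $\mathcal{L}$ vanishes.

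First, I will treat the $g_2$--source by Young's inequality in the form
$$
C\,g_2(t)E_1(t)\le \frac{a_1}{2}\,E_1^2(t)+\frac{C^2}{2a_1}\,g_2^2(t),
$$
which absorbs half of $a_1E_1^2$ on the left-hand side of \eqref{decayineq1}. This produces the cleaned-up differential inequality
$$
\frac{d}{dt}\mathcal{L}(t)+\frac{a_1}{2}E_1^2(t)+a_2E_2^2(t)+a_3E_3^2(t)\le Cg_1(t)\sqrt{\mathcal{L}(t)}+\frac{C^2}{2a_1}g_2^2(t).
$$
Integrating in time on $[0,t]\subset[0,T]$, one obtains
$$
\mathcal{L}(t)+\frac{a_1}{2}\int_0^tE_1^2\,d\tau+a_2\int_0^tE_2^2\,d\tau+a_3\int_0^tE_3^2\,d\tau\le \mathcal{L}(0)+C\int_0^tg_1(\tau)\sqrt{\mathcal{L}(\tau)}\,d\tau+\frac{C^2}{2a_1}\int_0^tg_2^2\,d\tau.
$$

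Next, I will take the supremum over $t\in[0,T]$. The delicate term is $\int_0^tg_1\sqrt{\mathcal{L}}\,d\tau$, which I will bound by $\|g_1\|_{L^1(0,T)}\sup_{[0,T]}\sqrt{\mathcal{L}}$ and then absorb into the left-hand side using a second Young's inequality,
$$
C\,\|g_1\|_{L^1(0,T)}\sup_{[0,T]}\sqrt{\mathcal{L}}\le \frac{1}{2}\sup_{[0,T]}\mathcal{L}+\frac{C^2}{2}\|g_1\|_{L^1(0,T)}^2.
$$
After rearranging, this yields
$$
\sup_{[0,T]}\mathcal{L}+a_1\|E_1\|_{L^2(0,T)}^2+a_2\|E_2\|_{L^2(0,T)}^2+a_3\|E_3\|_{L^2(0,T)}^2\le C\Bigl(\mathcal{L}(0)+\|g_1\|_{L^1(0,T)}^2+\tfrac{1}{a_1}\|g_2\|_{L^2(0,T)}^2\Bigr).
$$

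Finally, I will take square roots and exploit $\sqrt{a+b+c}\le \sqrt a+\sqrt b+\sqrt c$ together with the equivalence $\mathcal{L}\sim E_1^2+E_2^2+E_3^2$ to replace $\sqrt{\sup\mathcal{L}}$ by $\sup(E_1+E_2+E_3)$ and $\sqrt{\mathcal{L}(0)}$ by $E_1(0)+E_2(0)+E_3(0)$, delivering exactly \eqref{L2time}. The main (mild) obstacle is the bookkeeping of constants: one must be sure that the Young constants chosen to absorb $\tfrac{a_1}{2}E_1^2$ and $\tfrac12\sup\mathcal{L}$ depend only on $C$ (not on $a_1,a_2,a_3$ or $T$), so that the final $a_j^{-1/2}$ factor appears only in front of $\|g_2\|_{L^2}$ as stated, and not in front of $\|g_1\|_{L^1}$ or of the initial data.
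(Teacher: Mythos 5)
Your proposal is correct and follows essentially the same route as the paper's proof: integrate the Lyapunov inequality, absorb the $g_2$-term against $\tfrac{a_1}{2}\int E_1^2$ and the $g_1$-term against $\tfrac12\sup\mathcal{L}$ via Young's inequality, then take square roots and use $\mathcal{L}\sim E_1^2+E_2^2+E_3^2$. The only cosmetic difference is that you apply Young to $Cg_2E_1$ pointwise in time before integrating, whereas the paper integrates first and uses Cauchy--Schwarz in time followed by Young; the two are equivalent and your version even fixes a small typographical slip in the paper's displayed estimate.
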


\begin{proof}
Integrating \eqref{decayineq1} over $[0,T]$ yields
\begin{equation}\nonumber
\begin{aligned}
&\sup_{t\in[0,T]}\mathcal{L}(t)+\int_{0}^{T}\big(a_1 E^2_{1}(t)+a_{2} E^2_{2}(\tau)+a_{3}E^2_{3}(t)\big)\, dt\\
&\quad\leq C\int_{0}^{T}g_{1}(t)\,dt\sup_{t\in[0,T]}\sqrt{\mathcal{L}(t)}+C\Big(\int_{0}^{T}g_{2}^2(t)\,dt\Big)^{\frac{1}{2}}\Big(\int_{0}^{T}E^2_{1}(t)\,dt\Big)^{\frac{1}{2}}\\
&\quad\leq \frac{1}{2}\sup_{t\in[0,T]}\mathcal{L}(t)+\frac{a_1}{2}\int_{0}^{T} E_1^2(t)\,dt+C^2 \Big(  \int_{0}^{T}g_{1}(t)\,dt\Big)^2+\frac{C^2}{a_1} \int_{0}^{T}E^2_{1}(t)\,dt.
\end{aligned}
\end{equation}
Therefore, after taking the square root, we obtain \eqref{L2time}.
\end{proof}

We consider the following Cauchy problem for the damped heat equation in $\mathbb{R}^{d}$:
\begin{equation}
\left\{
\begin{aligned}
&\partial_t u- c_{1} \Delta u+c_{2} u =f,\\
&u(0, x)=u_0(x).
\end{aligned}
\right.\label{Heat}
\end{equation}

\begin{lemma}\label{maximalL2}
Let $s\in\mathbb{R}$, $T>0$ be given time, and $c_{i}$ $(i=1,2)$ be strictly positive constants. Assume $u_{0}\in\dot{B}^{s}$, and $f=f_{1}+f_{2}+f_{3}$ with $f_{i}$ $(i=1,2,3)$ satisfying $f_{1}\in L^1(0,T;\dot{B}^{s})$, $f_{2}\in \widetilde{L}^2(0,T;\dot{B}^{s-1})$ and $f_{3}\in \widetilde{L}^2(0,T;\dot{B}^{s})$. If $u$ is the solution to the Cauchy problem \eqref{Heat}, then $u$ satisfies
\begin{equation}\label{maximal22}
\begin{aligned}
&\|u\|_{\widetilde{L}^{\infty}_{t}(\dot{B}^{s})}+\sqrt{c_{1}}\|u\|_{\widetilde{L}^2_{t}(\dot{B}^{s+1})}+\sqrt{c_{2}}\|u\|_{\widetilde{L}^2_{t}(\dot{B}^{s})}\\
&\quad\leq C(\|u_{0}\|_{\dot{B}^{s}}+\|f_{1}\|_{L^1_{t}(\dot{B}^{s})}+\frac{1}{\sqrt{c_{1}}}\|f_{2}\|_{\widetilde{L}^2_{t}(\dot{B}^{s-1})}+\frac{1}{\sqrt{c_{2}}}\|f_{3}\|_{\widetilde{L}^2_{t}(\dot{B}^{s})}),\quad t\in(0,T),
\end{aligned}
\end{equation}
where $C>0$ is a constant independent of $c_{i}$ $(i=1,2)$ and $T$.
\end{lemma}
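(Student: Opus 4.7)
\textbf{Plan for the proof of Lemma \ref{maximalL2}.}
The strategy is a standard frequency-localized energy / Duhamel argument, with the three source terms $f_1,f_2,f_3$ treated by different Young-type convolution inequalities according to their respective integrability. First, I would apply $\dot\Delta_j$ to \eqref{Heat} to obtain
\begin{equation*}
\partial_t u_j - c_1\Delta u_j + c_2 u_j = f_{1,j}+f_{2,j}+f_{3,j},\qquad u_j(0)=(u_0)_j,
\end{equation*}
and take the $L^2$ inner product with $u_j$. Using Bernstein's inequality (Lemma \ref{lemma61}), one has $c_1\|\nabla u_j\|_{L^2}^2 \geq c_1\kappa 2^{2j}\|u_j\|_{L^2}^2$ for an absolute constant $\kappa>0$. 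Then the customary device of dividing by $(\|u_j\|_{L^2}^2+\eta)^{1/2}$ and sending $\eta\to 0^+$ produces the scalar ODE inequality
\begin{equation*}
\frac{d}{dt}\|u_j\|_{L^2}+\alpha_j\|u_j\|_{L^2}\leq \|f_{1,j}\|_{L^2}+\|f_{2,j}\|_{L^2}+\|f_{3,j}\|_{L^2},\qquad \alpha_j:=c_1\kappa 2^{2j}+c_2.
\end{equation*}

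The second step is to invoke Duhamel to obtain
\begin{equation*}
\|u_j(t)\|_{L^2}\leq e^{-\alpha_j t}\|(u_0)_j\|_{L^2}+\int_0^t e^{-\alpha_j(t-\tau)}\bigl(\|f_{1,j}\|_{L^2}+\|f_{2,j}\|_{L^2}+\|f_{3,j}\|_{L^2}\bigr)(\tau)\, d\tau,
\end{equation*}
and to take successively the $L^\infty_t$ and $L^2_t$ norms. For each source term I would select the matching Young inequality: the kernel $e^{-\alpha_j\cdot}$ lies in $L^1_t$ with norm $\alpha_j^{-1}$ and in $L^2_t$ with norm $(2\alpha_j)^{-1/2}$. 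Combining $L^1\!\ast L^\infty\to L^\infty$ and $L^2\!\ast L^2\to L^\infty$ for the $L^\infty_t$ bound, and $L^2\!\ast L^1\to L^2$ together with $L^1\!\ast L^2\to L^2$ for the $L^2_t$ bound, yields
\begin{equation*}
\|u_j\|_{L^\infty_t(L^2)}+\sqrt{\alpha_j}\,\|u_j\|_{L^2_t(L^2)}\lesssim \|(u_0)_j\|_{L^2}+\|f_{1,j}\|_{L^1_t(L^2)}+\frac{1}{\sqrt{\alpha_j}}\bigl(\|f_{2,j}\|_{L^2_t(L^2)}+\|f_{3,j}\|_{L^2_t(L^2)}\bigr).
\end{equation*}

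Finally, multiplying by $2^{js}$ and summing over $j\in\mathbb{Z}$, I would use the two elementary bounds
\begin{equation*}
\sqrt{c_1}\,2^{j}\leq \kappa^{-1/2}\sqrt{\alpha_j},\qquad \sqrt{c_2}\leq \sqrt{\alpha_j},
\end{equation*}
to absorb the prefactors $\sqrt{c_1}$ and $\sqrt{c_2}$ into $\sqrt{\alpha_j}$ on the left-hand side. On the right-hand side, the very same inequalities, now used in reverse, give $\alpha_j^{-1/2}\lesssim (\sqrt{c_1}\,2^j)^{-1}$ for the $f_2$-term (producing the $\dot B^{s-1}$ shift with the $1/\sqrt{c_1}$ prefactor) and $\alpha_j^{-1/2}\lesssim 1/\sqrt{c_2}$ for the $f_3$-term (keeping the $\dot B^s$ index with the $1/\sqrt{c_2}$ prefactor). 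This reproduces \eqref{maximal22} with constants independent of $c_1,c_2,T$.

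The main technical point—and the only place where care is needed—is the matching in the last step: one must route each $f_i$ through the Young inequality whose resulting negative power of $\alpha_j$ is exactly the one that, after summing with the weight $2^{js}$, translates into the correct Besov index and the correct power of $c_1$ or $c_2$. All other ingredients are routine energy estimates for the localized damped heat equation.
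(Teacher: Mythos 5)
Your proposal is correct and follows the same overall skeleton as the paper: a frequency-localized estimate on each dyadic block of the form $\|u_j\|_{L^\infty_t(L^2)}+\sqrt{c_1}\,2^j\|u_j\|_{L^2_t(L^2)}+\sqrt{c_2}\|u_j\|_{L^2_t(L^2)}\lesssim \|u_j(0)\|_{L^2}+\|f_{1,j}\|_{L^1_t(L^2)}+\tfrac{2^{-j}}{\sqrt{c_1}}\|f_{2,j}\|_{L^2_t(L^2)}+\tfrac{1}{\sqrt{c_2}}\|f_{3,j}\|_{L^2_t(L^2)}$, followed by multiplication by $2^{js}$ and summation over $j$. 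The only (harmless) difference is how the block estimate is obtained: the paper keeps the quadratic energy inequality, absorbs $f_2$ and $f_3$ into the dissipation by Young's product inequality pointwise in time, integrates, and takes a square root, whereas you first linearize to $\frac{d}{dt}\|u_j\|_{L^2}+\alpha_j\|u_j\|_{L^2}\le\sum_i\|f_{i,j}\|_{L^2}$ and then apply Duhamel with the matching Young convolution inequalities; both routes yield identical constants and regularity shifts.
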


\begin{proof}
Taking the $L^2$ inner product of $\eqref{Heat}$ with $u_j$ and using Young's inequality, we obtain
\begin{align}
\frac{d}{dt} \|u_j\|_{L^2}^2+\frac{1}{2}c_{1}2^{2j}\| u_j\|^2+\frac{1}{2}c_{2}\|u_{j}\|_{L^2}^2 \leq \|u_j\|_{L^2} \,\|\dot{\Delta}_{j}f_{1}\|_{L^2}+\frac{2^{-2j}}{c_{1}}\|\dot{\Delta}_{j}f_{2}\|_{L^2}^2+\frac{1}{c_{2}}\|\dot{\Delta}_{j}f_{3}\|_{L^2}^2.\label{881}
\end{align}
Integrating \eqref{881} over $[0,t]$ yields
\begin{equation}\label{882}
\begin{aligned}
&\|u_j\|_{L^{\infty}_{t}(L^2)}^2+\frac{1}{2} c_{1}2^{2j}\int_0^t\|u_j\|_{L^2}^2\,d\tau+\frac{1}{2}c_{2}\int_0^t\|u_j\|_{L^2}^2\,d\tau\\
&\quad\leq \|u_j(0)\|_{L^2}^2+\int_0^t\|\dot{\Delta}_{j}f_{1}\|_{L^2}d\tau \|u_j\|_{L^{\infty}_{t}(L^2)}+\frac{2^{-2j}}{c_{1}}\int_0^t\|\dot{\Delta}_{j}f_{2}\|_{L^2}^2\,d\tau+\frac{1}{c_{2}}\int_0^t\|\dot{\Delta}_{j}f_{3}\|_{L^2}^2\,d\tau.
\end{aligned}
\end{equation}
Employing \eqref{882} and Young's inequality, we arrive at
\begin{equation}\nonumber
\begin{aligned}
&\|u_j\|_{L^{\infty}_{t}(L^2)}+\sqrt{c_{1}}2^{j}\|u_j\|_{L^2_{t}(L^2)}+\sqrt{c_{2}}\|u_j\|_{L^2_{t}(L^2)}\\
&\quad \lesssim  \|u_j(0)\|_{L^2}+\|\dot{\Delta}_{j}f_{1}\|_{L^1_{t}(L^2)}+\frac{2^{-j}}{\sqrt{c_{1}}}\|\dot{\Delta}_{j}f_{2}\|_{L^2_{t}(L^2)}+\frac{1}{\sqrt{c_{2}}}\|\dot{\Delta}_{j}f_{3}\|_{L^2_{t}(L^2)},
\end{aligned}
\end{equation}
which leads to \eqref{maximal22}.
\end{proof}





\bigbreak
\bigbreak
\bigbreak
\bigbreak
\textbf{Acknowledgments}
T. Crin-Barat is supported by the Alexander von Humboldt-Professorship program and the Deutsche
Forschungsgemeinschaft (DFG, German Research Foundation) under project C07 of the
Sonderforschungsbereich/Transregio 154 ‘Mathematical Modelling, Simulation and
Optimization using the Example of Gas Networks’ (project ID: 239904186). L.-Y. Shou is supported by the National Natural Science Foundation of China (12301275) and the China Postdoctoral Science Foundation (2023M741694).
J. Xu is partially supported by the National Natural Science Foundation of China (12271250, 12031006) and the Fundamental Research Funds for the Central Universities, NO. NP2024105. 


\vspace{2mm}

\textbf{Conflict of interest.} The authors do not have any possible conflict of interest.

\vspace{2mm}

\textbf{Data availability statement.}
 Data sharing not applicable to this article as no data sets were generated or analysed during the current study.

\bibliographystyle{abbrv} 

\bibliography{Reference}
\vfill 
\end{document}